\numberwithin{equation}{section}
\newtheorem{mythm}{Theorem}[section]
\newtheorem{mylem}{Lemma}[section]
\newtheorem{mycor}{Corollary}[section]
\newtheorem{myrem}{Remark}[section]
\newcommand{\norm}[1]{\left\Vert#1\right\Vert}
\newcommand{\norml}[2]{\left\Vert#1\right\Vert_{L^2(#2)}}
\newcommand{\norme}[1]{\left\Vert{\hskip -2.7pt}\left\vert #1 \right\vert{\hskip -2.7pt}\right\Vert}
\newcommand{\abs}[1]{\left\vert#1\right\vert}
\newcommand{\pd}[1]{\left\langle #1\right\rangle}
\newcommand{\pdjj}[2]{\left\langle \left[#1\right], \left[#2\right]\right\rangle_e}
\newcommand{\set}[1]{\left\{#1\right\}}
\newcommand{\jm}[1]{\left[#1\right]}
\newcommand{\R}{\mathbb{R}}
\newcommand{\csta}{C_{\rm sta}}
\newcommand{\cerr}{C_{\rm err}}
\newcommand{\db}{\displaybreak[0]}
\newcommand{\nn}{\nonumber}
\newcommand{\ls}{\lesssim}
\newcommand{\De}{\Delta}
\newcommand{\ep}{\varepsilon}
\newcommand{\ga}{\gamma}
\newcommand{\Ga}{\Gamma}
\newcommand{\na}{\nabla}
\newcommand{\om}{\omega}
\newcommand{\Om}{\Omega}
\newcommand{\pa}{\partial}
\newcommand{\pr}{\prime}
\newcommand{\si}{\sigma}
\newcommand{\ze}{\zeta}
\renewcommand{\i}{{\rm\mathbf i}}
\newcommand{\ue}{u_\mathcal{E}}
\newcommand{\ua}{u_\mathcal{A}}
\DeclareMathOperator{\re}{{Re}} 
\DeclareMathOperator{\im}{{Im}}
\DeclareMathOperator{\ios}{{\mathcal{I}_{\rm Os}}}
\newcommand{\p}{\partial}
\newcommand{\T}{\mathcal{T}}
\newcommand{\E}{\mathcal{E}}
\title{Pre-asymptotic error Analysis of CIP-FEM and FEM\\ for Helmholtz Equation with high
Wave Number. \\Part II: $hp$ version}
\author{
Lingxue Zhu
\thanks{Department of Mathematics, Nanjing University, Jiangsu,
210093, P.R. China. ({\tt zhulingxue@163.com}). }
\and 
Haijun Wu
\thanks{Department of Mathematics, Nanjing University, Jiangsu,
210093, P.R. China. ({\tt hjw@nju.edu.cn}). The work of the second author was
partially supported by the National Magnetic Confinement Fusion Science Program under grant 2011GB105003 and by the NSF of China grants 10971096, 11071116, 91130004.}
}
\begin{document}
\date{}
\maketitle


\setcounter{page}{1}

\begin{abstract}
In this paper, which is part II in a series of two, the pre-asymptotic error analysis of the continuous interior penalty finite element method (CIP-FEM) and the FEM for the Helmholtz equation in two and three dimensions is continued. While part I contained results on the linear CIP-FEM and FEM, the present part deals with approximation spaces of order $p \ge 1$. By using a modified duality argument, pre-asymptotic error estimates are derived for both methods under the condition of $\frac{kh}{p}\le C_0\big(\frac{p}{k}\big)^{\frac{1}{p+1}}$, where $k$ is the wave number, $h$ is the mesh size, and $C_0$ is a constant independent of $k, h, p$, and the penalty parameters. It is shown that the pollution errors of both methods in $H^1$-norm are $O(k^{2p+1}h^{2p})$ if $p=O(1)$ and are $O\Big(\frac{k}{p^2}\big(\frac{kh}{\sigma p}\big)^{2p}\Big)$ if the exact solution $u\in H^2(\Om)$ which coincide with existent dispersion analyses for the FEM on Cartesian grids. Here $\si$ is a constant independent of $k, h, p$, and the penalty parameters. Moreover, it is proved that the CIP-FEM is stable for any $k, h, p>0$ and penalty parameters with positive imaginary parts. Besides the advantage of the absolute stability of the CIP-FEM compared to the FEM, the penalty parameters may be tuned to reduce the pollution effects.
\end{abstract}

{\bf Key words.} 
Helmholtz equation, large wave number, pre-asymptotic error estimates,
continuous interior penalty finite element methods, finite element methods

{\bf AMS subject classifications. }
65N12, 
65N15, 
65N30, 
78A40  


\section{Introduction}\label{sec-1} This is the second installment in a series (cf. \cite{w}) which is devoted to pre-asymptotic stability and error estimates of $hp$ versions of some continuous interior penalty finite element method (CIP-FEM) and the finite element method (FEM) for the following Helmholtz problem:
\begin{alignat}{2}
-\De u - k^2 u &=f  &&\qquad\mbox{in  } \Om,\label{eq1.1a}\\
\frac{\pa u}{\pa n} +\i k u &=g &&\qquad\mbox{on } \Ga,\label{eq1.1b}
\end{alignat}
where $\Om\subset \R^d,\, d=2,3$ is a domain with smooth boundary,
$\Ga:=\pa\Om$, $\i=\sqrt{-1}$ denotes the imaginary unit, and $n$
denotes the unit outward normal
to $\pa\Om$. The above Helmholtz problem is an approximation of the following
acoustic scattering problem (with time dependence $e^{\i\om t}$):
\begin{alignat}{2}\label{e1.1a}
-\De u- k^2 u &= f \qquad &&\mbox{in } \R^d,\\
\sqrt{r}\Bigl( \frac{\p (u-u^{\rm inc})}{\p r} + \i k (u-u^{\rm inc})\Bigr) &\rightarrow 0 &&\mbox{as }
r=|x|\rightarrow \infty, \label{e1.2a}
\end{alignat}
where  $u^{\rm inc}$ is the incident wave and $k$ is known as the wave number. The Robin boundary condition \eqref{eq1.1b} is known as the
first order approximation of the radiation condition \eqref{e1.2a} (cf. \cite{em79}).
We remark that the Helmholtz problem \eqref{eq1.1a}--\eqref{eq1.1b} also arises in applications
 as a consequence of frequency domain treatment of attenuated scalar waves 
(cf. \cite{dss94}). 

It is well-known that the error bound of the finite element solution to the Helmholtz problem \eqref{eq1.1a}--\eqref{eq1.1b} usually consists of two parts, one is the same order as the error of the best approximation of $u$ from the finite element space, another one is worse than the convergence order of the best approximation and dominates the error bound of the finite element solution for large wave number $k$ (relative to $h$ and $p$) \cite{Ainsworth04,bs00, bips95, dbb99,ey11b, fw09, fw11, harari97, ib95a,ib97, thompson06, w}. The second part is the so-called pollution error in the literature. We recall that, the term ``asymptotic error estimate" refers to the error estimate without pollution error and the term ``pre-asymptotic error estimate" refers to the estimate with non-negligible  pollution effect.  The latest asymptotic error analysis was given by Melenk and Sauter \cite{ms10,ms11}.  It is shown that, the $hp$-FEM is pollution-free in the $H^1$-norm if 
\begin{align}\label{emscond0}
\frac{k h}{p}+k\big(\frac{kh}{\sigma p}\big)^p \text{ is small enough},
\end{align}
where $h$ is the mesh size, $p$ is the polynomial degree of finite element space, and $\si$ is some positive constant independent of $k, h$, and $p$. In particular, the $hp$-FEM is pollution-free under either of the following two conditions:
\begin{equation}\label{emscond1a}
p=O(1) \text{ fixed independent of } k\text{ and } k^{1+\frac1p}h \text{ is small enough},
\end{equation} 
\begin{equation}\label{emscond2a}
p\ge c\ln k  \text{ and } \frac{k\,h}{p} \text{ is small enough},
\end{equation} 
where $c$ is some positive constant independent of $k, h$, and $p$.
 Although the above results improve greatly the previous results which require $\frac{k^2h}{p}$ small enough (cf. \cite{ib95a,ib97}), the pre-asymptotic error estimates are still worth to be studied based on but not limited to the following considerations:
\begin{itemize}
\item For fixed $p$, the pollution effect can be reduced substantially but cannot be avoided in principle (cf. \cite{bips95,bs00}).

\item  Implementations of high order FEMs, e.g. $p\ge \ln k$ for large $k$, are not easy for problems with complicated geometries or different materials.

\item Given a tolerance $\ep$, one wish to use as small as possible number of degrees of freedom, or in other words, as large as possible mesh size $h$,  to achieve this tolerance. We denote the maximum mesh size by $h(k,\ep)$. Then $h(k,\ep)$ always locates in the pre-asymptotic range for large $k$ because the pollution error dominates the error bound. For example, when $p=1$, for large $k$, the pollution term is $O(k^3h^2)$ (cf. \cite{ib95a, w}) and hence $h(k, \ep)=O\big((\ep/k^3)^{1/2}\big)$ which is not in the asymptotic range $(0, C/k^2)$ implied by the condition \eqref{emscond1a}.

\item The pre-asymptotic error analysis of FEM is hard for higher dimensional problems. To the best of the authors' knowledge, besides the first part of this series \cite{w} for the linear FEM, no work has been done ever since the pioneer works of Ihlenburg and Babu\v{s}ka \cite{ib95a,ib97} for one dimensional problems.
\end{itemize}

The purpose of this paper and the companion paper \cite{w} is twofold: One is to derive pre-asymptotic error estimates for the FEM for the Helmholtz equation in two and three dimensions, under the condition of $\frac{kh}{p}\le C_0\big(\frac{p}{k}\big)^{\frac{1}{p+1}}$ for some positive constant $C_0$ independent of $k, h,$ and $p$. Clearly, this condition extends the  asymptotic range given by \eqref{emscond0}. For example, when $p=O(1)$, the above condition is satisfied if $k^{1+\frac{1}{p+1}}h$ is small enough. This would
be the condition obtained for the $(p+1)$-th order FEM by ``standard" arguments (cf. \eqref{emscond1a}).  Another purpose is to analyze some CIP-FEM which is absolutely stable (that is, stable for any $k, h,$ and $p$) and capable of achieving much less pollution effect than the FEM for fixed $p$. The CIP-FEM, which was first proposed by Douglas and Dupont \cite{dd76} for elliptic and parabolic problems in 1970's and then successfully applied to convection-dominated problems as a stabilization technique \cite{burman05, be05,be07, bfh06, bh04}, modifies the sesquilinear form of the FEM by adding the following least squares term penalizing the jump of the gradient of the discrete solution at mesh interfaces:
\begin{align}\label{ejump}
J(u,v):=&\sum_{e\in\E_h^{I}}\i\ga \frac{h_e}{p^2} \pdjj{\frac{\pa u}{\pa n_e}}{\frac{\pa v}{\pa n_e}}.
\end{align}
Note that the penalty parameter $\i\ga$ is chosen as a complex number in this paper and \cite{w} instead of the real numbers as usual.  The paper \cite{w} has considered the linear case $p=1$ and this paper will be devoted to  the $hp$-version. To be precise, we obtain the following results:
\begin{itemize}
\item[(i)]  There exists
a constant $C_0>0$ independent of $k$, $h$, and $\ga$, 
such that if $k\gtrsim 1$,
$0\le\ga\lesssim 1$,  and
\end{itemize}
 \begin{equation}\label{econd1a}
\frac{kh}{p}\le C_0\Big(\frac{p}{k}\Big)^{\frac{1}{p+1}}, 
\end{equation}
\begin{itemize}
\item[] then the following pre-asymptotic error
estimates hold for both the CIP-FEM and the FEM:\
\begin{align*}
\norm{u-u_h}_{H^1(\Om)} & \le\left\{\begin{array}{l}
				C_1\Big(\dfrac{h}{p}+\dfrac{1}{p}\Big(\dfrac{kh}{\sigma p}\Big)^p\Big)+C_2\,\dfrac{k}{p^2}\Big(\dfrac{kh}{\sigma p}\Big)^{2p}, \quad\text{ if } u\in H^2(\Om),\\
				\\
				C_1 (kh)^p+C_2 k(k h)^{2p}, \quad\text{ if }p=O(1) \text{ and } \norm{u}_{H^{p+1}(\Om)}\ls k^p.
				\end{array}\right.
\end{align*}
Here $\si$ is a constant independent of $k, h, p$, and the penalty parameters. Note that the pollution term is $O(k^{2p+1}h^{2p})$ if $p=O(1)$ and $\norm{u}_{H^{p+1}(\Om)}\ls k^p$ and is $O\Big(\frac{k}{p^2}\big(\frac{kh}{\sigma p}\big)^{2p}\Big)$ if $u\in H^2(\Om)$.
\item[(ii)] The CIP-FEM attains a unique solution for any $k>0$, $h>0$, $p>0$, and $\ga>0$.
\item[(iii)] Estimates in the $L^2$-norm are also obtained.
\end{itemize}
We remark that the numerical tests in \cite{w}  for the linear CIP-FEM show that the penalty parameter may be tuned to greatly reduce the pollution errors.

Error analysis and dispersion analysis are two main tools to understand numerical behaviors in short wave computations. The later one, which is usually performed on \emph{structured} meshes, estimates the error between the wave number $k$ of the continuous problem and some discrete wave number (denoted by $\om$) of the numerical scheme \cite{Ainsworth04, Ainsworth09, dbb99, harari97, ib95a, ib97, thompson06, thompson94}. In particular, it is shown for the $hp$-FEM (cf. \cite{Ainsworth04, ib97}) that
\begin{align*}
k-\om=\left\{\begin{array}{ll}
O\big(k^{2p+1}h^{2p}\big) &\text{ if } k h\ll 1, \\
O\Big(\dfrac{k}{p}\Big(\dfrac{e k h}{4p}\Big)^{2p}\Big)&\text{ if } k h\gg 1 \text{ and } 2p+1>\dfrac{e k h}{2}.
\end{array}\right.
\end{align*}
By contrast, our pre-asymptotic error analysis, which works also for \emph{unstructured} meshes, gives error estimates between the exact solution $u$ and the discrete solution $u_h$. Clearly, our pollution error bounds in $H^1$-norm coincide with the phase difference $\abs{k-\om}$ as above.

Our analysis relies on two novel tricks and the profound stability estimates for the continuous problem given in \cite{ms10, ms11}. The first one is a modified duality argument (or Aubin-Nitsche trick). The traditional duality argument is a crucial step in asymptotic error analyses of FEM for scattering problems,
which is usually used to estimate the $L^2$-error of the finite element solution by its $H^1$-error (cf. \cite{ak79,dss94,ib97,ms10,ms11,sch74}).  Our key idea is to use some special designed elliptic projections in the duality-argument step so that we can bound the $L^2$-error of the discrete solution by using the errors of the elliptic projections of the exact solution $u$ and thus obtain the $L^2$-error estimate directly by the modified duality argument. This helps us to derive pre-asymptotic error estimates for the CIP-FEM and the FEM under the condition \eqref{econd1a}. The second trick is used to prove the absolute stability of the CIP-FEM. One of the authors and Feng  \cite{fw09, fw11} have developed an approach to analyze the stability of the interior penalty discontinuous Galerkin methods which makes use of the special test function $v_h:=\na u_h\cdot (x-x_\Om)$ (defined element-wise) and a local version of the Rellich identity (for the Laplacian) and mimics the stability analysis for the PDE solutions given in \cite{melenk95b,cf06,hetmaniuk07}. Here $x_\Om$ is a point such that the domain $\Om$ is strictly star-shaped with respect to it. But the function $v_h$ is not in the approximation space of CIP-FEM which is the same as that of the FEM, i.e., the space of \emph{continuous} piecewise (mapped) polynomials, and thus, it can not server as a test function for the CIP-FEM.  We circumvent this difficulty by using the $L^2$ projection of $v_h$ onto the approximation space of CIP-FEM instead. By using the so-called ``Oswald interpolation" \cite{be07, hw96, kp94, oswald93} we may show that the difference between $v_h$ and its $L^2$ projection is estimated by the jump of $v_h$ at mesh interfaces, and hence, controlled by using the jump term \eqref{ejump} in the CIP-FEM. We remark that the technique for deriving the stability of the linear CIP-FEM developed in the first part of the series \cite{w} does not work for higher order methods because it relies on the fact of $\De u_h=0$ on each element.

The remainder of this paper is organized as follows. The CIP-FEM is introduced in Section~\ref{sec-2}. Some preliminary results,  including the stability of the continuous solution, the approximation properties of the $hp$-finite element space, and estimates of some elliptic projections, are cited or proved in Section~\ref{sec-pre}.  In Section~\ref{sec-preasy-dual}, by using an improved duality argument, some pre-asymptotic error estimates in $H^1$- and $L^2$-norms are given for both CIP-FEM and FEM under the condition that $\big(\frac{k}{p}\big)^{\frac{1}{p+1}}\frac{kh}{p}$ is small enough. In Section~\ref{sec-sta}, the CIP-FEM is shown to be stable for  any $k>0$, $h>0$, $p>0$, and $\ga>0$. In Section~\ref{sec-preasy-sta}, pre-asymptotic error estimates of the CIP-FEM are proved for $k>0$, $h>0$, $p>0$, and $\ga>0$ by utilizing the error estimates for the elliptic projection, the stability results for the CIP-FEM, and the triangle inequality.  
The proofs of two preliminary lemmas on $hp$-approximation properties are given in Appendix~\ref{sec-app}.

Throughout the paper, $C$ is used to denote a generic positive constant
which is independent of $h$, $p$, $k$, $f$, $g$, and the penalty parameters. We also use the shorthand
notation $A\lesssim B$ and $B\gtrsim A$ for the
inequality $A\leq C B$ and $B\geq CA$. $A\simeq B$ is a shorthand
notation for the statement $A\lesssim B$ and $B\lesssim A$. We assume that $k\gtrsim 1$ since we are considering high-frequency problems. For the ease of presentation, we assume that $k$ is constant on the domain $\Om$. We also assume that $\Om$ is a strictly star-shaped domain with an analytic boundary. Here ``strictly star-shaped" means that there exist a point $x_\Om\in\Om$ and a positive constant $c_\Om$ depending only on $\Om$ such that 
\begin{equation}\label{def1}
(x-x_\Om)\cdot n\ge c_\Om,\quad\forall x\in\pa\Om.
\end{equation}
 Sure the theory of this paper can be extended to the case of polyhedral domains. The extensions to other types of boundary conditions, such as PML absorbing boundary condition (cf. \cite{ber94,bw05,cw03}) and DtN boundary condition (cf. \cite{ms10}), will be addressed in future works.

\section{Formulation of CIP-FEM}\label{sec-2}
To formulate our CIP-FEM, we first introduce some notation. The standard space, norm and inner product notation
are adopted. Their definitions can be found in \cite{bs08,ciarlet78}.
In particular, $(\cdot,\cdot)_Q$ and $\pd{ \cdot,\cdot}_\Sigma$
for $\Sigma\subset \pa Q$ denote the $L^2$-inner product
on complex-valued $L^2(Q)$ and $L^2(\Sigma)$
spaces, respectively. Denote by $(\cdot,\cdot):=(\cdot,\cdot)_\Om$
and $\pd{ \cdot,\cdot}:=\pd{ \cdot,\cdot}_{\p\Om}$. 

For the triangulation of $\Om$ we adopt the setting of \cite{ms10, ms11}. The triangulation $\T_h$ consists of elements which are the image of the reference triangle (in two dimensions) or the reference tetrahedron (in three dimensions). We do not allow hanging nodes and assume---as is standard---that the element maps of elements sharing an edge or a face induce the same parametrization on that edge or face. Additionally, the element maps $F_K$ from the reference element $\hat K$ to $K\in \T_h$ satisfy the assumption 5.1 in \cite{ms11}. For any triangle/tetrahedron $K\in \T_h$, we define $h_K:=\mbox{diam}(K)$. 
Similarly, for each edge/face $e$ of $K\in \T_h$, define $h_e:=\mbox{diam}(e)$. Let $h=\max_{K\in\T_h}h_K$.
 We define
\begin{eqnarray*}
\E_h^I&:=& \mbox{ set of all interior edges/faces of $\T_h$},\\
\E_h^B&:=& \mbox{ set of all boundary edges/faces of $\T_h$ on $\Ga$}.
\end{eqnarray*}
We also define the jump $\jm{v}$ of $v$ on an interior edge/face
$e=\p K\cap \p K^\pr$ as
\[
\jm{v}|_{e}:=\left\{\begin{array}{ll}
       v|_{K}-v|_{K^\pr}, &\quad\mbox{if the global label of $K$ is bigger},\\
       v|_{K^\pr}-v|_{K}, &\quad\mbox{if the global label of $K^\pr$ is bigger}.
\end{array} \right.
\]
 For every $e=\p K\cap \p K^\pr\in\E_h^I$, let $n_e$ be the unit outward normal
to edge/face $e$ of the element $K$ if the global label of $K$ is bigger
and of the element $K^\pr$ if the other way around. For every $e\in\E_h^B$, let $n_e=n$ the unit outward normal to $\pa\Om$.

Now we define the ``energy" space $V$ and the sesquilinear
form $a_h(\cdot,\cdot)$ on $V\times V$ as follows:
\begin{align}
V&:=H^1(\Om)\cap\prod_{K\in\T_h} H^2(K), \nonumber \\
\label{eah}
a_h(u,v)&:=(\na u,\na v)+  J(u,v)\qquad\forall\, u, v\in V,
\end{align}
where
\begin{align}
J(u,v):=&\sum_{e\in\E_h^{I}}\i\ga_e\, \frac{h_e}{p^2} \pdjj{\frac{\pa u}{\pa n_e}}{\frac{\pa v}{\pa n_e}},
\label{eJ}\db
\end{align}
and  $\ga_e, e\in\E_h^I$ are 
nonnegative numbers to be specified later. 

\begin{myrem}
(a) The terms in $J(u,v)$ are so-called penalty terms.
The penalty parameter in $J(u,v)$ is
$\i\ga_e$.  So it is a
pure imaginary number with positive imaginary part. It turns out that if it is replaced by a complex number with positive
imaginary part, the ideas of the paper still apply.  Here we set their
real parts to be zero partly because the terms from real parts do not help much
(and do not cause any problem either) in our theoretical analysis and partly for the ease of presentation.

(b) Penalizing the jumps of normal derivatives 
 was used early by Douglas and Dupont \cite{dd76} for second order PDEs and by Babu{\v{s}}ka and Zl\'amal \cite{bz73} for fourth order PDEs in
the context of $C^0$ finite element methods, by Baker \cite{baker77} for fourth order PDEs
and by Arnold \cite{arnold82} for second order
parabolic PDEs in the context of IPDG methods. 

(c) In this paper we consider the scattering problem with time dependence $e^{\i\om t}$, that is, the signs before $\i$'s in the Sommerfeld radiation condition \eqref{e1.2a} and its first order approximation \eqref{eq1.1b} are positive. If we consider the scattering problem with time dependence $e^{-\i\om t}$, that is, the signs before $\i$'s in  \eqref{e1.2a} and  \eqref{eq1.1b} are negative, then the penalty parameters should be complex numbers with  negative imaginary parts.
\end{myrem}

It is clear that $J(u,v)=0$ if $u\in H^2(\Om)$ and $v\in V$. Therefore, if $u\in H^2(\Om)$ is the solution of \eqref{eq1.1a}--\eqref{eq1.1b}, then 
\begin{equation}\label{2.6}
a_h(u,v) - k^2(u,v) +\i k \pd{ u,v}
=(f,v)+\pd{g, v},\qquad\forall v\in V.
\end{equation}

Let $V_h$ be the $hp$-CIP approximation space, that is,
$$V_h:=\set{v_h \in H^1(\Omega):\ v_h\mid_K\circ F_K \in \mathcal P_p(\hat K), \forall K \in \T_h},$$
where $\mathcal P_p(\hat K)$ denote the set of all polynomials whose degrees do not exceed $p$ on $\hat K$. Then our CIP-FEMs are defined as follows: Find $u_h\in V_h$ such that
\begin{equation}\label{ecipfem}
a_h(u_h,v_h) - k^2(u_h,v_h) +\i k \pd{ u_h,v_h}
=(f, v_h)+\pd{g, v_h},  \qquad\forall v_h\in V_h.
\end{equation}
We remark that if the parameters $\ga_e\equiv 0$, then the above CIP-FEM becomes the standard FEM.

The following (semi-)norms on the space $V$ are useful for the subsequent analysis:
\begin{align}\label{e2.5}
\norm{v}_{1,h}:=&\bigg( \norml{\na v}{\Om}^2+\sum_{e\in\E_h^{I}}\ga_e \frac{h_e}{p^2} \norml{\jm{\frac{\pa v}{\pa n_e}}}{e}^2\bigg)^{1/2},\\
 \norme{v}:=&\big(\norm{v}_{1,h}^2+k\norml{v}{\Ga}^2\big)^{\frac12}.\label{e2.5b}
\end{align}

In the next sections, we shall consider the pre-asymptotic stability and
error analysis for the above CIP-FEM and the FEM.  For the ease of presentation, we assume that $\ga_e\simeq \ga$ for some positive constant $\ga$ and that $h_K\simeq h$. 

\section{Preliminary lemmas}\label{sec-pre} In this section, we first recall the stability estimates of the continuous problem from Melenk and Sauter \cite{ms10, ms11}. Then we consider the $hp$-approximation estimates of the discrete space $V_h$. These estimates improve slightly the estimates in \cite{ms11} when $u\in H^2(\Om)$. Finally, we analyze some elliptic projections which are crucial for our pre-asymptotic analysis. 
 
\subsection{Stability estimates of the continuous problem}
Let $\nabla ^n$ stand for derivatives of order $n$; more precisely, for a function $u$: $\Omega\rightarrow \R^d$, $\Omega \subset \R^d$, $|\nabla^n u(x)|^2=\sum_{\alpha \in \mathbb{N}_0^d:|\alpha|=n}\frac{n!}{\alpha !}|D^\alpha u(x)|^2 $. The following lemma (cf. \cite[Theorem 4.10]{ms11}) says that the solution $u$ to the continuous problem \eqref{eq1.1a}--\eqref{eq1.1b} can be decomposed into the sum of an elliptic part and an analytic part $u=\ue+\ua$ where $\ue$ is usually non-smooth but the $H^2$-bound of $\ue$ is independent of $k$ and $\ua$ is oscillatory but the $H^j$-bound of $\ua$ is available for any integer $j\ge 0$.  
\begin{mylem}\label{depcomposition}
The solution $u$ to the problem \eqref{eq1.1a}--\eqref{eq1.1b}
can be written as $u=\ue+\ua$, and satisfies
\begin{align}
\|\ue\|_{H^2(\Om)}+k |\ue|_{H^1(\Om)}+k^2\|\ue\|_{L^2(\Om)}&\leq C C_{f,g},\label{u_E}\\
|u_{\mathcal{A}}|_{H^1(\Om)}+k\|u_{\mathcal{A}}\|_{L^2(\Om)}&\leq C C_{f,g},\label{u_A1}\\
\forall p \in \mathbb{N}_0, \ \ \ \  \|\nabla ^{p+2} u_{\mathcal{A}}\|_{L^2(\Om)}&\leq C \lambda^p k^{-1}\max\{p,k\}^{p+2}
C_{f,g}. \label{u_A2}
\end{align}
Here, $\lambda>1$ independent of $k$ and $C_{f,g}:=\|f\|_{L^2(\Omega)}$ \textnormal{+} $\|g\|_{H^{1/2}(\Gamma)}$.
\end{mylem}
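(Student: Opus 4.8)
Since this is precisely \cite[Theorem~4.10]{ms11}, the natural move is to quote it; what follows is the line of argument one would reproduce for a self-contained proof. The starting point is the classical $k$-explicit a~priori bound for the Helmholtz problem on a strictly star-shaped domain: testing \eqref{eq1.1a}--\eqref{eq1.1b} with $u$ and with the Rellich--Morawetz multiplier $(x-x_\Om)\cdot\na\bar u$ and exploiting \eqref{def1} yields $|u|_{H^1(\Om)}+k\|u\|_{L^2(\Om)}\ls C_{f,g}$. This already delivers \eqref{u_A1} and the low-order portion of \eqref{u_E} once the splitting is in hand, so the task reduces to producing a decomposition with the asserted \emph{higher}-order behavior.

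For the splitting I would extend $(f,g)$ to $\R^d$ (and to an analytic collar of $\Ga$) and apply a smooth Fourier cutoff at wavenumber scale, $f=f_{\rm lo}+f_{\rm hi}$ with $f_{\rm lo}=\chi(|D|/(\eta k))f$ for a fixed $\eta$. The analytic part $\ua$ is then built from the band-limited data, with a bootstrap absorbing the lower-order and high-frequency residuals; the elliptic part $\ue$ solves the Helmholtz problem driven by $f_{\rm hi}$. The point is that $f_{\rm hi}$ lives at frequencies $\gtrsim k$, hence is small in negative-order norms by powers of $1/k$, and this gain compensates the factor $k^2$ in $-\De-k^2$, forcing an $H^2$-bound on $\ue$ that is \emph{uniform} in $k$, i.e.\ \eqref{u_E}.

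The heart of the matter is \eqref{u_A2}, for which I would establish a $k$-explicit \emph{analytic regularity} (Caccioppoli-type) estimate for solutions of $-\De w=k^2w+(\text{analytic source})$: iterating interior and boundary elliptic regularity on balls of geometrically shrinking radius, each differentiation costs either a fixed geometric factor --- producing $\lambda^p$ --- or, when it lands on the term $k^2w$, a factor $k$; after $p+2$ steps the growth is therefore governed by $\max\{p,k\}^{p+2}$ rather than the usual $p^p$. Covering $\bar\Om$ by finitely many such balls, using analyticity of $\pa\Om$ and of the element maps near $\Ga$, and summing yields \eqref{u_A2}.

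The genuinely hard part --- and the reason the full proof is simply cited --- is the simultaneous bookkeeping of the two competing growth mechanisms: ordinary analytic elliptic regularity contributes $p^p$-type constants, whereas the oscillation of Helmholtz solutions forces a separate $k^p$-type growth, and the iteration (together with the analytic change of variables of the Melenk--Sauter framework, Assumption~5.1 of \cite{ms11}, near the boundary) must be organized so that the final bound is the $\max\{p,k\}^{p+2}$ form and neither mechanism is double-counted nor lost. By comparison, the a~priori estimate, the frequency cutoff, and the negative-norm smallness of $f_{\rm hi}$ are routine.
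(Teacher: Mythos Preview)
Your proposal is correct and matches the paper's treatment: the paper does not prove this lemma at all but simply cites \cite[Theorem~4.10]{ms11}, exactly as you recommend at the outset. Your additional sketch of the Melenk--Sauter argument (frequency cutoff of the data, negative-norm gain for the high-frequency part yielding the $k$-uniform $H^2$ bound on $\ue$, and iterated analytic-regularity estimates producing the $\lambda^p\max\{p,k\}^{p+2}$ growth for $\ua$) accurately reflects the structure of the proof in \cite{ms10,ms11} and goes beyond what the paper itself provides.
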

\begin{myrem}\label{ruh2}
An direct consequence of the above lemma is that 
\[\norm{u}_{H^2(\Om)}\lesssim k C_{f,g}.\]
This estimate was proved in \cite{cf06,hetmaniuk07, melenk95b}.
\end{myrem}
\subsection{Approximation properties} In this subsection we consider to approximate the solution $u$ to the problem \eqref{eq1.1a}--\eqref{eq1.1b} by finite element functions in $V_h$. We give two types of approximation estimates in both $L^2$-norm and the norm $\norme{\cdot}$ defined in \eqref{e2.5b}. The first type of estimates can be applied to smooth solution and gives higher order convergences both in $h$ and $p$. The second type of estimates assumes only $H^2$ regularity and gives first order convergences (even for higher order elements), but it uses the decomposition given in Lemma~\ref{depcomposition} and is more subtle than the first one. The proofs are a little long and will be given in the appendix. 
 
\begin{mylem}\label{lapprox1}
Let $s\ge 2$ and $\mu = \min \{p + 1, s\}$. Suppose $u \in H^s(\Omega)$. Then there exists $\hat{u}_h \in V_h$ such that
\begin{align}\label{elapprox1a}
\norml{u-\hat{u}_h}{\Omega}&\lesssim \frac{ h^{\mu}}{p^{s}}\|u\|_{H^s(\Omega)},\\
\norme{u-\hat{u}_h}&\ls \cerr \frac{ h^{\mu-1}}{p^{s-1}}\|u\|_{H^s(\Omega)},\label{elapprox1b}
\end{align}
where $\cerr:=\big(1+\ga+\frac{kh}{p}\big)^{\frac{1}{2}}$.
\end{mylem}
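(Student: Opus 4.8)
The plan is to construct $\hat u_h$ as a standard $hp$-quasi-interpolant of $u$ on the mesh $\T_h$, for example the element-wise $H^1$-conforming projection-based interpolation operator of Melenk--Sauter (or a Scott--Zhang type operator built from the reference element), and then to verify the three constituent bounds that make up \eqref{elapprox1a}--\eqref{elapprox1b}: the $L^2(\Om)$-bound, the $H^1$-seminorm bound (i.e.\ $\norml{\na(u-\hat u_h)}{\Om}$), the boundary $L^2(\Ga)$-bound (i.e.\ $k^{1/2}\norml{u-\hat u_h}{\Ga}$), and the new jump term $\big(\sum_{e}\ga_e\frac{h_e}{p^2}\norml{\jm{\pa(u-\hat u_h)/\pa n_e}}{e}^2\big)^{1/2}$. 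First I would invoke the standard $hp$ approximation theory on the reference element (as in \cite{bs08} or \cite{ms11}), pulled back through the element maps $F_K$ whose regularity is guaranteed by Assumption~5.1 of \cite{ms11}, to get, for each $K$,
\[
\norml{u-\hat u_h}{K}\ls \frac{h^\mu}{p^s}\|u\|_{H^s(K)},\qquad
\norml{\na(u-\hat u_h)}{K}\ls \frac{h^{\mu-1}}{p^{s-1}}\|u\|_{H^s(K)},
\]
with $\mu=\min\{p+1,s\}$; summing over $K$ gives \eqref{elapprox1a} and the gradient part of \eqref{elapprox1b}.

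Next I would handle the two trace-type contributions. For the boundary term, a scaled multiplicative trace inequality on each boundary element, $\norml{w}{e}^2\ls h_e^{-1}\norml{w}{K}^2+h_e\norml{\na w}{K}^2$ (with the usual $p$-explicit refinement, $\norml{w}{e}^2\ls \frac{p}{h}\norml{w}{K}^2+\frac{h}{p}\norml{\na w}{K}^2$ for polynomials, but here applied to the approximation error one may also use the cruder $h$-version since $w=u-\hat u_h$ is not polynomial), combined with the two interior bounds above, yields
\[
k\,\norml{u-\hat u_h}{\Ga}^2\ls k\Big(\frac1h\frac{h^{2\mu}}{p^{2s}}+h\frac{h^{2\mu-2}}{p^{2s-2}}\Big)\|u\|_{H^s(\Om)}^2
\ls \frac{kh}{p}\cdot\frac{h^{2\mu-2}}{p^{2s-2}}\|u\|_{H^s(\Om)}^2,
\]
which contributes the factor $\frac{kh}{p}$ inside $\cerr^2=1+\ga+\frac{kh}{p}$. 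For the jump term, since $u\in H^s(\Om)\subset H^1(\Om)$ with $s\ge 2$ gives $\jm{\pa u/\pa n_e}=0$ across each interior face, we have $\jm{\pa(u-\hat u_h)/\pa n_e}=-\jm{\pa\hat u_h/\pa n_e}$, and the same trace inequality applied from both sides of $e$ to $\na(u-\hat u_h)$ (whose jump equals that of $\na\hat u_h$ in the normal component) gives
\[
\sum_{e\in\E_h^I}\ga_e\frac{h_e}{p^2}\norml{\jm{\tfrac{\pa(u-\hat u_h)}{\pa n_e}}}{e}^2
\ls \ga\sum_{K}\Big(\frac{1}{p^2}\frac{p}{h_K}\norml{\na(u-\hat u_h)}{K}^2+\frac{h_K}{p^2}\norml{\na^2(u-\hat u_h)}{K}^2\Big),
\]
and invoking the $H^2$-analogue of the reference-element estimate, $\norml{\na^2(u-\hat u_h)}{K}\ls \frac{h^{\mu-2}}{p^{s-2}}\|u\|_{H^s(K)}$ (valid since $s\ge 2$), one checks both terms on the right are $\ls \ga\,\frac{h^{2\mu-2}}{p^{2s-2}}\|u\|_{H^s(\Om)}^2$, producing the factor $\ga$ in $\cerr^2$. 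Collecting the three contributions gives $\norme{u-\hat u_h}^2\ls\big(1+\ga+\frac{kh}{p}\big)\frac{h^{2\mu-2}}{p^{2s-2}}\|u\|_{H^s(\Om)}^2$, which is \eqref{elapprox1b}.

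The main obstacle, and the reason the proof ``is a little long,'' is getting all the powers of $p$ exactly right: the $p$-explicit trace inequalities, the $p$-explicit $hp$-approximation estimates on the reference element, and the interplay with the weight $h_e/p^2$ in the jump term must all be tracked carefully so that the final exponent of $p$ is $p^{-(s-1)}$ and no spurious powers of $p$ survive in $\cerr$. A secondary subtlety is that one must use a single interpolant $\hat u_h$ that simultaneously satisfies the $L^2$, $H^1$, and $H^2$-broken estimates with the stated $h$- and $p$-rates; the projection-based interpolation of \cite{ms11} does this, so I would cite their construction and stability/approximation properties rather than rebuild it, and then only verify the (routine but bookkeeping-heavy) passage to the $\norme{\cdot}$-norm via trace inequalities. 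The detailed verification is deferred to the appendix.
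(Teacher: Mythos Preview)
Your approach is structurally the same as the paper's: cite standard $hp$-approximation for the $L^2$- and $H^1$-errors, then handle the boundary and jump contributions via trace inequalities. There are, however, two points where your argument diverges from what the paper actually does.

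First, your boundary-term computation is off by a factor of $p$. With the additive trace inequality $\|w\|_{L^2(e)}^2\ls h^{-1}\|w\|_{L^2(K)}^2+h\|\na w\|_{L^2(K)}^2$ that you invoke, the second term contributes $kh\cdot\frac{h^{2\mu-2}}{p^{2s-2}}$, not $\frac{kh}{p}\cdot\frac{h^{2\mu-2}}{p^{2s-2}}$ as you write. The paper instead uses the global multiplicative trace inequality $\|u-\hat u_h\|_{L^2(\Ga)}\ls\|u-\hat u_h\|_{L^2(\Om)}^{1/2}\|u-\hat u_h\|_{H^1(\Om)}^{1/2}$, which picks up the extra $1/p$ from the $L^2$-estimate and yields the correct $\frac{kh}{p}$ contribution to $\cerr^2$.

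Second, and more substantively, for the jump term you assume that the \emph{conforming} interpolant $\hat u_h\in V_h$ satisfies the broken $H^2$-estimate $\|\na^2(u-\hat u_h)\|_{L^2(K)}\ls\frac{h^{\mu-2}}{p^{s-2}}\|u\|_{H^s(K)}$ with the optimal $p$-rate, and you claim the Melenk--Sauter operator ``does this''. The paper does not rely on such an estimate: its cited approximation lemma (Lemma~\ref{u_u_E}) grants the optimal $H^2$-rate only for a \emph{discontinuous} approximant $\check u_h\in W_h$, and only $j=0,1$ for the continuous $\hat u_h$. The paper's actual argument writes
\[
\|u-\hat u_h\|_{H^2(\T_h)}\le\|u-\check u_h\|_{H^2(\T_h)}+\|\check u_h-\hat u_h\|_{H^2(\T_h)}
\ls\frac{h^{\mu-2}}{p^{s-2}}+\frac{p^2}{h}\,\frac{h^{\mu-1}}{p^{s-1}},
\]
the second bound coming from the polynomial inverse inequality applied to $\check u_h-\hat u_h$. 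Combined with the multiplicative local trace inequality $\|\na w\|_{L^2(\pa K)}^2\ls h^{-1}\|\na w\|_{L^2(K)}^2+\|\na w\|_{L^2(K)}\|\na^2 w\|_{L^2(K)}$ and the weight $\ga_e h_e/p^2$, the extra factor $p^2/h$ from the inverse step is exactly absorbed, producing the $\ga$-contribution to $\cerr^2$. This detour through $\check u_h$ is the technical content of the proof; your proposal bypasses it by an assumption that the paper evidently did not regard as available.
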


 The following lemma gives approximation estimates of  the solution to the problem \eqref{eq1.1a}-\eqref{eq1.1b}.
 {\begin{mylem}\label{error2}
Let $u$ be the solution to the problem \eqref{eq1.1a}-\eqref{eq1.1b}. Then
there exist $\hat{u}_h \in V_h$ and a constant $\sigma >0$ independent of $k$, $p$, $h$, and the penalty parameters, such that
\begin{align}
\|u-\hat{u}_h\|_{L^2(\Om)}&\lesssim \Big(\frac{h^2}{p^2}+\frac{h}{p^{2}}\Big(\frac{kh}{\sigma p}\Big)^p\Big)C_{f,g},\label{u_err0}\\
 \norme{u-\hat{u}_h}&\lesssim \cerr\Big(\frac{h}{p}+\frac{1}{p}\Big(\frac{kh}{\sigma p}\Big)^p\Big)C_{f,g},\label{u_err2}
\end{align}
where $C_{f,g}:=\|f\|_{L^2(\Omega)}$ \textnormal{+} $\|g\|_{H^{1/2}(\Gamma)}$ and $\cerr:=\big(1+\ga+\frac{kh}{p}\big)^{\frac{1}{2}}$.
\end{mylem}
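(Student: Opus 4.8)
The plan is to build $\hat u_h$ additively from the Melenk--Sauter decomposition $u=\ue+\ua$ of Lemma~\ref{depcomposition}, treating the (possibly rough but $k$-independently $H^2$-bounded) elliptic part $\ue$ and the (entire but $k$-oscillatory) analytic part $\ua$ separately. For $\ue$ I would invoke Lemma~\ref{lapprox1} with $s=2$; since $p\ge1$ this gives $\mu=\min\{p+1,2\}=2$, so there is $\hat u_{\mathcal E}\in V_h$ with $\norml{\ue-\hat u_{\mathcal E}}{\Om}\ls\frac{h^2}{p^2}\norm{\ue}_{H^2(\Om)}$ and $\norme{\ue-\hat u_{\mathcal E}}\ls\cerr\frac{h}{p}\norm{\ue}_{H^2(\Om)}$. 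By the bound \eqref{u_E} on $\ue$, these right-hand sides are $\ls\frac{h^2}{p^2}C_{f,g}$ and $\ls\cerr\frac{h}{p}C_{f,g}$, which are exactly the first terms of \eqref{u_err0}--\eqref{u_err2}.

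The heart of the argument is the approximation of $\ua$ by some $\hat u_{\mathcal A}\in V_h$ with the sharp decay rate $\big(\frac{kh}{\sigma p}\big)^{p}$. Here I would apply the $hp$-approximation machinery behind Lemma~\ref{lapprox1} with a Sobolev order of size $s\simeq p+1$, so that the full polynomial degree $p$ of $V_h$ is exploited and only the top-order seminorm $|\ua|_{H^{s}}$ enters, and then insert the explicit derivative bound \eqref{u_A2}. One must dispose of the factor $\max\{p,k\}$ in \eqref{u_A2} by distinguishing the regime $k\ge p$ — where $\max\{p,k\}=k$, the derivatives of $\ua$ up to order about $p$ are bounded by $C\la^{p}$ times a suitable power of $k$ times $C_{f,g}$, and the $h$- and $k$-powers assemble into $(kh/p)^{p}$ — from the regime $k<p$, where $\max\{p,k\}=p$ and the extra $p$-powers are absorbed into the constant. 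The constant $\sigma$ is then built from the $\la$ of Lemma~\ref{depcomposition} together with the universal constants of the interpolation estimate, and one checks it is independent of $k,h,p$ and the penalty parameters. For the $\norme{\cdot}$ bound one additionally has to control the interface term $\sum_{e\in\E_h^I}\ga_e\frac{h_e}{p^2}\norml{\jm{\frac{\pa}{\pa n_e}(\ua-\hat u_{\mathcal A})}}{e}^2$; since $\ua\in H^2(\Om)$ has continuous normal derivative across interfaces, the jump of $\hat u_{\mathcal A}$ equals the jump of the error $\ua-\hat u_{\mathcal A}$, which is controlled element by element by a trace plus inverse inequality — this step is already contained in the proof of Lemma~\ref{lapprox1} and produces the prefactor $\cerr$.

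Finally I would set $\hat u_h:=\hat u_{\mathcal E}+\hat u_{\mathcal A}\in V_h$ and add the two estimates by the triangle inequality to obtain \eqref{u_err0} and \eqref{u_err2}. The main obstacle is the analytic part: producing the clean rate $\big(\frac{kh}{\sigma p}\big)^{p}$ with the correct prefactors — $\frac{h}{p^2}$ in the $L^2$-estimate and $\frac1p$ in the $\norme{\cdot}$-estimate, differing by the expected extra power of $h/p$ from the one lost derivative — requires a delicate balance between the $p^{-s}$ gain of the $hp$-interpolant and the $\la^{p}\max\{p,k\}^{p+2}$ growth of $\|\ua\|_{H^{s}}$, together with careful tracking of the universal constants so that $\sigma$ emerges independent of all parameters; the case split $k\ge p$ versus $k<p$ is where the computation is most error-prone.
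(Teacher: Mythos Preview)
Your overall architecture — split $u=\ue+\ua$, approximate $\ue$ via Lemma~\ref{lapprox1} with $s=2$, approximate $\ua$ separately, then add — is exactly what the paper does. The treatment of $\ue$ is fine. The gap is in your plan for $\ua$.

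You propose to invoke ``the $hp$-approximation machinery behind Lemma~\ref{lapprox1} with a Sobolev order of size $s\simeq p+1$'' and then balance the $p^{-s}$ gain against $\la^p\max\{p,k\}^{p+2}$. This does not work as stated, because the hidden constants in Lemma~\ref{lapprox1} (and in the underlying result, Lemma~\ref{u_u_E}) depend on the Sobolev index $s$; the paper says so explicitly in the statement of Corollary~\ref{cor-2}. Taking $s=p+1$ therefore injects an uncontrolled $p$-dependence into the ``$\lesssim$'', and no amount of case-splitting $k\ge p$ versus $k<p$ will repair that: you cannot extract a clean $\big(\frac{kh}{\sigma p}\big)^p$ with $p$-independent prefactor from a finite-order Sobolev estimate whose constant blows up in $p$.

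The paper (Lemma~\ref{u_u_A1} in the appendix) avoids this by \emph{not} passing through any fixed-order Sobolev bound. It encodes the full analyticity of $\ua$ in a single weighted sum $C_K^2=\sum_{n\ge0}\|\nabla^n\ua\|_{L^2(K)}^2/(2\la\max\{n,k\})^{2n}$, shows $\sum_K C_K^2\lesssim k^{-2}C_{f,g}^2$ from \eqref{u_A1}--\eqref{u_A2}, and then applies the specialized analytic-approximation results \cite[Lemmas~C.2, C.3]{ms10}, which are designed precisely to give the geometric rate $\big(\frac{h}{h+\sigma}\big)^{p+1}+\big(\frac{kh}{\sigma p}\big)^{p+1}$ with constants independent of $p$. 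This yields the $H^1$-estimate $\frac{1}{p}\big(\frac{kh}{\sigma p}\big)^p C_{f,g}$ directly. The sharp $L^2$-bound with prefactor $\frac{h}{p^2}$ is then obtained not from the interpolant itself but by passing to an elliptic projection of $\ua$ (with respect to $a(u,v)=(\nabla u,\nabla v)+(u,v)$) and running a Nitsche duality argument to gain the extra $h/p$. Your sketch does not mention this step; without it the $L^2$-prefactor you would get is $\frac{h}{p}$, not $\frac{h}{p^2}$.
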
}
\begin{myrem} The above estimates are based on the results from \cite{ms10,ms11} and a duality argument. Similar estimates have been given in \cite{ms10,ms11} as follows
\begin{align*}
\inf_{v_h\in V_h}\big(\norml{\na(u-v_h)}{\Om}+k\norml{u-v_h}{\Om}\big)\ls \Big(1+\frac{k h}{p}\Big)\Big(\frac{h}{p}+\Big(\frac{k h}{\sigma p}\Big)^p\Big)C_{f,g}.
\end{align*} 
Our estimates in this lemma improve a little the above estimate in terms of $p$.  See also Remark~\ref{rem-A1}.
\end{myrem}

\subsection{Elliptic projections} In this subsection, we introduce two kind of elliptic projections and estimate the approximation errors of them. 
For any $u \in  V$, we define its elliptic projections $u_h^+ \in V_h $  and $u_h^- \in V_h $ by the following two formulations, respectively.
\begin{equation}\label{ahuh+}
a_h(u_h^+,v_h)+\i k\langle u_h^+,v_h\rangle =a_h(u,v_h)+\i k\langle u,v_h \rangle \ \ \ \ \ \forall v_h \in V_h,
\end{equation}
\begin{equation}\label{ahuh-}
a_h(v_h,u_h^-)+\i k\langle v_h,u_h^-\rangle =a_h(v_h,u)+\i k\langle v_h,u \rangle \ \ \ \ \ \forall v_h \in V_h.
\end{equation}
Let us take a look at the projections from other points of view. Define
\begin{align*}
a_h^\pm(u,v)&:=(\na u,\na v)\pm\sum_{e\in\E_h^{I}}\i\ga_e\, \frac{h_e}{p^2} \pdjj{\frac{\pa u}{\pa n_e}}{\frac{\pa v}{\pa n_e}}.
\end{align*}
Then $a_h^+=a_h$, $a_h^-(u,v)=\overline{a_h(v,u)}$ (cf. \eqref{eah}), and clearly, $u_h^\pm\in V_h$ satisfies
\begin{equation}\label{ahuh+-}
a_h^\pm(u_h^\pm,v_h)\pm\i k\langle u_h^\pm,v_h\rangle =a_h^\pm(u,v_h)\pm\i k\langle u,v_h \rangle \ \ \ \ \ \forall v_h \in V_h.
\end{equation}
In other words, $u_h^\pm$ is an CIP finite element approximation to the solution $u$ of the following (complex-valued) Poisson problem:
\begin{align*}
-\triangle u&=F \ \ \ \ \text{in} \ \ \Om,\\
\frac{\partial u}{\partial n}\pm\i ku&=\psi \ \ \ \ \text{on} \ \ \Gamma,
\end{align*}
for some given function $F$ and $\psi$ which are determined by $u$.

Next, we estimate the errors of $u_h^\pm$. From \eqref{ahuh+-} we have the following Galerkin orthogonality:
\begin{equation}\label{aheta}
a_h^\pm(u-u_h^\pm,v_h)\pm\i k\langle u-u_h^\pm,v_h\rangle=0 \ \ \ \ \forall v_h \in V_h.
\end{equation}
We state the following continuity and coercivity properties for the sesquilinear form $a_h(\cdot,\cdot)$. Since they follow easily from \eqref{eah}-\eqref{e2.5}, so we omit their proofs to save space.
{\begin{mylem}\label{lemma4.1}
For any $v,w \in V$,
\begin{equation}
|a_h^\pm(v,w)|,|a_h^\pm(w,v)|\leq \|v\|_{1,h}\|w\|_{1,h},
\end{equation}
\begin{equation}
\re a_h^\pm(v,v)\pm \im a_h^\pm(v,v)=\|v\|^2_{1,h}.
\end{equation}
\end{mylem}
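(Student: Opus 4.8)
The plan is to verify both statements by a direct computation from the definitions \eqref{eah}--\eqref{e2.5}, splitting $a_h^\pm(\cdot,\cdot)$ into its volume part $(\na v,\na w)$ and its jump part and estimating each piece separately.

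For the continuity bounds, I would first apply the Cauchy--Schwarz inequality in $L^2(\Om)$ to get $\abs{(\na v,\na w)}\le\norml{\na v}{\Om}\norml{\na w}{\Om}$ and in $L^2(e)$ to get $\big|\pdjj{\frac{\pa v}{\pa n_e}}{\frac{\pa w}{\pa n_e}}\big|\le\norml{\jm{\frac{\pa v}{\pa n_e}}}{e}\norml{\jm{\frac{\pa w}{\pa n_e}}}{e}$ for every $e\in\E_h^I$. Since $\abs{\pm\i\ga_e}=\ga_e$, the triangle inequality yields
\begin{align*}
\abs{a_h^\pm(v,w)}\le\norml{\na v}{\Om}\norml{\na w}{\Om}+\sum_{e\in\E_h^I}\Big(\sqrt{\ga_e\tfrac{h_e}{p^2}}\,\norml{\jm{\tfrac{\pa v}{\pa n_e}}}{e}\Big)\Big(\sqrt{\ga_e\tfrac{h_e}{p^2}}\,\norml{\jm{\tfrac{\pa w}{\pa n_e}}}{e}\Big).
\end{align*}
Interpreting the right-hand side as the Euclidean inner product of the finite sequences with entries $\norml{\na v}{\Om}$ and $\sqrt{\ga_e h_e/p^2}\,\norml{\jm{\pa v/\pa n_e}}{e}$ (and likewise for $w$), a discrete Cauchy--Schwarz inequality bounds it by $\norm{v}_{1,h}\norm{w}_{1,h}$ in view of \eqref{e2.5}. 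The estimate for $\abs{a_h^\pm(w,v)}$ follows in exactly the same way, since $a_h^\pm(w,v)$ has the identical algebraic structure (equivalently, one may use $a_h^-(v,w)=\overline{a_h^+(w,v)}$ together with $\abs{\overline{z}}=\abs{z}$).

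For the coercivity identity, I would observe that $(\na v,\na v)=\norml{\na v}{\Om}^2$ and $\pdjj{\frac{\pa v}{\pa n_e}}{\frac{\pa v}{\pa n_e}}=\norml{\jm{\frac{\pa v}{\pa n_e}}}{e}^2$ are real and nonnegative, so that the purely imaginary penalty weights $\pm\i\ga_e$ force
\begin{align*}
\re a_h^\pm(v,v)=\norml{\na v}{\Om}^2,\qquad\im a_h^\pm(v,v)=\pm\sum_{e\in\E_h^I}\ga_e\frac{h_e}{p^2}\norml{\jm{\tfrac{\pa v}{\pa n_e}}}{e}^2.
\end{align*}
Adding these two quantities with the matching sign cancels the $\pm$ and reproduces exactly $\norm{v}_{1,h}^2$ by \eqref{e2.5}. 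There is no genuine obstacle here; the only point deserving care is the sign and conjugation bookkeeping — specifically, that the $L^2(\Om)$ and $L^2(e)$ pairings of a function with itself are real and nonnegative, so that the purely imaginary penalty weights contribute solely to $\im a_h^\pm(v,v)$ and make the $\pm$ signs in both statements consistent.
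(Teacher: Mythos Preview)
Your argument is correct and is precisely the direct verification from \eqref{eah}--\eqref{e2.5} that the paper has in mind; the paper itself omits the proof, noting only that both estimates follow easily from these definitions. Nothing more is needed.
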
}
\begin{mylem}\label{error1}
Suppose $u$ is any function in $H^2(\Om)$. Then there hold the following estimates:
\begin{align}\label{lem 2.2}
\norme{u-u_h^\pm}&\lesssim \inf_{z_h \in V_h}\norme{u-z_h},\\
\label{lem 2.3}
\|u-u_h^\pm\|_{L^2(\Om)}&\lesssim \cerr\frac{h}{p}\inf_{z_h \in V_h}\norme{u-z_h},
\end{align}
where $\cerr:=\big(1+\ga+\frac{kh}{p}\big)^{\frac{1}{2}}$.
\end{mylem}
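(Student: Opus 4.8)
The plan is to prove Lemma~\ref{error1} by a standard C\'ea-type argument for the energy estimate \eqref{lem 2.2}, followed by a duality (Aubin--Nitsche) argument for the $L^2$-estimate \eqref{lem 2.3}. Throughout I would work with the sesquilinear form $b_h^\pm(v,w):=a_h^\pm(v,w)\pm\i k\pd{v,w}$, so that the Galerkin orthogonality \eqref{aheta} reads $b_h^\pm(u-u_h^\pm,v_h)=0$ for all $v_h\in V_h$, and set $\eta^\pm:=u-u_h^\pm$.

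First I would establish \eqref{lem 2.2}. The key observation is that $b_h^\pm$ is coercive in the norm $\norme{\cdot}$: taking real and (plus/minus) imaginary parts and using Lemma~\ref{lemma4.1} gives, for any $v\in V$,
\begin{equation*}
\re b_h^\pm(v,v)\pm\im b_h^\pm(v,v)=\|v\|_{1,h}^2+k\norml{v}{\Ga}^2=\norme{v}^2,
\end{equation*}
hence $\norme{v}^2\lesssim|b_h^\pm(v,v)|$. Continuity is equally immediate from Lemma~\ref{lemma4.1} and the Cauchy--Schwarz inequality on the boundary term: $|b_h^\pm(v,w)|\lesssim\norme{v}\norme{w}$. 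Then for any $z_h\in V_h$, writing $u_h^\pm-z_h\in V_h$ and using orthogonality,
\begin{equation*}
\norme{u_h^\pm-z_h}^2\lesssim|b_h^\pm(u_h^\pm-z_h,u_h^\pm-z_h)|=|b_h^\pm(u-z_h,u_h^\pm-z_h)|\lesssim\norme{u-z_h}\,\norme{u_h^\pm-z_h},
\end{equation*}
so $\norme{u_h^\pm-z_h}\lesssim\norme{u-z_h}$, and \eqref{lem 2.2} follows from the triangle inequality and taking the infimum over $z_h\in V_h$. Note this step does not see the wave number at all — it is a genuinely elliptic (sign-definite up to the $\i k$ boundary term) problem, which is exactly why these projections are useful.

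Next, the duality argument for \eqref{lem 2.3}. I would introduce the dual problem: let $w^\pm\in H^2(\Om)$ solve $-\De w^\pm=\eta^\pm$ in $\Om$, $\pa w^\pm/\pa n\mp\i k w^\pm=0$ on $\Ga$ (the opposite sign, so that $b_h^\pm(v,w^\pm)=(v,\eta^\pm)$ for smooth $v$ after integration by parts; one has to check the adjoint pairing carefully, which is where the $a_h^\mp(v,w)=\overline{a_h^\pm(w,v)}$ relation and the chosen sign convention come in). Since the imaginary part of the ``coefficient'' has the correct sign for this to be an elliptic (coercive) problem — not the indefinite Helmholtz problem — the elliptic regularity bound $\|w^\pm\|_{H^2(\Om)}\lesssim\norml{\eta^\pm}{\Om}$ holds with a constant independent of $k$ (the $\i k$ boundary term only helps). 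Then, using $J(w^\pm,v)=0$ since $w^\pm\in H^2(\Om)$, Galerkin orthogonality against an arbitrary $\chi_h\in V_h$, continuity of $b_h^\pm$, the energy estimate \eqref{lem 2.2} just proved, and the approximation estimate \eqref{elapprox1b} of Lemma~\ref{lapprox1} with $s=2$ applied to $w^\pm$,
\begin{align*}
\norml{\eta^\pm}{\Om}^2&=b_h^\pm(\eta^\pm,w^\pm)=b_h^\pm(\eta^\pm,w^\pm-\chi_h)\lesssim\norme{\eta^\pm}\,\norme{w^\pm-\chi_h}\\
&\lesssim\norme{\eta^\pm}\cdot\cerr\frac{h}{p}\|w^\pm\|_{H^2(\Om)}\lesssim\cerr\frac{h}{p}\,\norme{\eta^\pm}\,\norml{\eta^\pm}{\Om}.
\end{align*}
Dividing by $\norml{\eta^\pm}{\Om}$ and invoking \eqref{lem 2.2} gives \eqref{lem 2.3}.

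The main obstacle — really the only delicate point — is verifying the duality identity $b_h^\pm(\eta^\pm,w^\pm)=(\eta^\pm,\eta^\pm)$ cleanly, including getting the boundary signs consistent and checking that $\eta^\pm\in V$ (only piecewise $H^2$, with a gradient jump) is a legitimate test function when integrating by parts against the smooth $w^\pm$; the interior-edge jump contributions from integration by parts must be shown to vanish, which uses $w^\pm\in H^2(\Om)$ so its normal derivative is single-valued across each $e\in\E_h^I$. I would also need the $k$-independent $H^2$-regularity of the dual solution, but for a strictly star-shaped domain with analytic boundary and the dissipative Robin condition this is classical and can simply be cited. Everything else is routine C\'ea/Aubin--Nitsche bookkeeping, and the factor $\cerr h/p$ in \eqref{lem 2.3} is dictated precisely by the $s=2$ case of \eqref{elapprox1b}.
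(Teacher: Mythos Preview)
Your proposal is correct and follows essentially the same approach as the paper: a coercivity/continuity (C\'ea-type) argument for \eqref{lem 2.2} followed by an Aubin--Nitsche duality argument using the auxiliary Poisson problem with the dissipative Robin condition and Lemma~\ref{lapprox1} with $s=2$ for \eqref{lem 2.3}. The only cosmetic difference is that the paper applies coercivity directly to $\eta=u-u_h^\pm$ (via $b_h^\pm(\eta,\eta)=b_h^\pm(\eta,u-z_h)$) rather than to $u_h^\pm-z_h$, and your worry about interior-edge contributions in the duality identity is unnecessary since $\eta^\pm\in H^1(\Om)$ and $w^\pm\in H^2(\Om)$ allow a single global integration by parts.
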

\begin{proof} We only prove the estimates for $u_h^+$ since the proof for $u_h^-$ follows almost the same procedure.
For any $z_h \in V_h$, let $\eta$ =$ u-u_h^+$, $\eta_h$=$u_h^+-z_h$. From $\eta_h+\eta=u-z_h$ and \eqref{aheta}, we have
\begin{equation}\label{lem 2.1}
a_h(\eta,\eta)+\i k\langle\eta,\eta\rangle=a_h(\eta,u-z_h)+\i k\langle \eta,u-z_h \rangle.
\end{equation}
Applying  Lemma~\ref{lemma4.1} and \eqref{lem 2.1} we can obtain that
\begin{align*}
\|\eta\|^2_{1,h}&=\textnormal{Re}\  a_h(\eta,\eta)+\im \  a_h(\eta,\eta)\\
&=\re (a_h(\eta,\eta)+\i k\langle \eta,\eta \rangle)+\im (a_h(\eta,\eta)+\i k\langle \eta,\eta \rangle)-k\langle \eta,\eta \rangle\\
&=\re (a_h(\eta,u-z_h)+\i k\langle \eta,u-z_h \rangle)\\
&\ \ \ \ +\im (a_h(\eta,u-z_h)+\i k\langle \eta,u-z_h \rangle)-k\|\eta\|_{L^2(\Ga)}\\
&\leq C\Big(\|\eta\|_{1,h}\|u-z_h\|_{1,h}+k\|\eta\|_{L^2(\Ga)}\|u-z_h\|_{L^2(\Ga)}\Big)-k\|\eta\|_{L^2(\Ga)}.
\end{align*}
Therefore,
\begin{equation}
\|\eta\|^2_{1,h}+k\|\eta\|^2_{L^2(\Ga)}\lesssim \|u-z_h\|^2_{1,h}+k\|u-z_h\|^2_{L^2(\Ga)}.
\end{equation}
That is, \eqref{lem 2.2} holds.

To show \eqref{lem 2.3}, we use the Nitsche's duality argument (cf. \cite{bs08,ciarlet78}). Consider the following auxiliary problem:
\begin{align}
-\triangle w&=\eta \ \ \ \ \ \text{in} \ \ \Om, \label{auxi1} \\
\frac{\partial w}{\partial n}-\i kw&=0 \ \ \ \ \ \text{on} \ \ \Ga. \label{auxi2}
\end{align}
It can be shown that $w$ satisfies
\begin{equation} \label{auxi11}
\|w\|_{H^2(\Om)}\lesssim \|\eta\|_{L^2(\Om)}.
\end{equation}
As a matter of fact, the PDE theory shows that $\abs{w}_{H^2(\Om)}\le C(\Om)\big(\norml{\De w}{\Om}+\norm{w}_{H^1(\Om)}\big)$ (cf. \cite{ks95}), and testing \eqref{auxi1} by the conjugate of $w$ and taking the real part and imaginary part imply that $\norm{w}_{H^1(\Om)}\ls \norml{\eta}{\Om}$.

Let  $\hat{w}_h \in V_h$  be defined in Lemma~\ref{lapprox1} (with $u$ replaced by $w$). From \eqref{aheta},
\begin{equation*}
a_h(\eta,\hat{w}_h)+\i k\langle \eta,\hat{w}_h\rangle=0.
\end{equation*}
Testing the conjugated \eqref{auxi1} by $\eta $ and using the above orthogonality we get
\begin{align*}
\|\eta\|^2_{L^2(\Om)}&=-(u-u_h^+,\triangle w)=a_h(u-u_h^+,w)+\i k\langle u-u_h^+,w\rangle\\
&=a_h(\eta,w-\hat{w}_h)+\i k\langle \eta,w-\hat{w}_h \rangle\\
&\lesssim \|\eta\|_{1,h}\|w-\hat{w}_h\|_{1,h}+k \|\eta\|_{L^2(\Ga)}\|w-\hat{w}_h\|_{L^2(\Ga)}\\
&\ls\norme{\eta}\norme{w-\hat{w}_h}\\
&\lesssim \norme{\eta}\cerr\frac{h}{p}\|w\|_{H^2(\Om)},
\end{align*}
which together with \eqref{lem 2.2} and \eqref{auxi11} gives \eqref{lem 2.3}. The proof is completed.
\end{proof}

By combining Lemma~\ref{error1} and Lemma~\ref{error2} we have the following lemma which gives the error estimates between the solution to the problem \eqref{eq1.1a}-\eqref{eq1.1b} and its elliptic projections.
\begin{mylem}\label{u-_err}
Let $u$ be the solution to the problem \eqref{eq1.1a}-\eqref{eq1.1b}. Then there hold the following estimates:
\begin{align}
\norme{u-u_h^\pm}& \lesssim \cerr \Big(\frac{h}{p}+\frac{1}{p}\Big(\frac{kh}{\sigma p}\Big)^p\Big)C_{f,g},\\
 \|u-u_h^\pm\|_{L^2(\Om)}& \lesssim \cerr^2\Big(\frac{h^2}{p^2}+\frac{h}{p^2}\Big(\frac{kh}{\sigma p}\Big)^p\Big)C_{f,g}.
\end{align}
where $\cerr:=\big(1+\ga+\frac{kh}{p}\big)^{\frac{1}{2}}$.
\end{mylem}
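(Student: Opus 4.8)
The plan is to obtain Lemma~\ref{u-_err} simply by composing the two preceding results. Since $u$ is the solution of \eqref{eq1.1a}--\eqref{eq1.1b}, Remark~\ref{ruh2} gives $u\in H^2(\Om)$, so Lemma~\ref{error1} applies to $u$ itself and yields $\norme{u-u_h^\pm}\lesssim\inf_{z_h\in V_h}\norme{u-z_h}$ together with $\|u-u_h^\pm\|_{L^2(\Om)}\lesssim\cerr\frac hp\inf_{z_h\in V_h}\norme{u-z_h}$. Thus everything reduces to estimating the best-approximation error $\inf_{z_h\in V_h}\norme{u-z_h}$ for this particular $u$.

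First I would take $z_h=\hat u_h$, the interpolant furnished by Lemma~\ref{error2} for the Helmholtz solution $u$, to get $\inf_{z_h\in V_h}\norme{u-z_h}\le\norme{u-\hat u_h}\lesssim\cerr\big(\frac hp+\frac1p(\frac{kh}{\sigma p})^p\big)C_{f,g}$. Substituting this into the first estimate of Lemma~\ref{error1} reproduces the claimed bound for $\norme{u-u_h^\pm}$ verbatim; substituting it into the second estimate gives $\|u-u_h^\pm\|_{L^2(\Om)}\lesssim\cerr\frac hp\cdot\cerr\big(\frac hp+\frac1p(\frac{kh}{\sigma p})^p\big)C_{f,g}=\cerr^2\big(\frac{h^2}{p^2}+\frac h{p^2}(\frac{kh}{\sigma p})^p\big)C_{f,g}$, which is the second assertion. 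No re-tuning of $\sigma$ is needed, since it is inherited unchanged from Lemma~\ref{error2}, and both signs $\pm$ are handled simultaneously because Lemma~\ref{error1} covers both.

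I expect no real obstacle: the substance is entirely contained in Lemmas~\ref{error1} and~\ref{error2}, and the present lemma is their combination. The only points requiring a moment's attention are (i) checking $u\in H^2(\Om)$ before invoking Lemma~\ref{error1} (this is exactly its abstract hypothesis, satisfied here by Remark~\ref{ruh2}), and (ii) recognizing that the abstract best-approximation quantity $\inf_{z_h\in V_h}\norme{u-z_h}$ in Lemma~\ref{error1} is precisely what Lemma~\ref{error2} bounds for this $u$. One might be tempted to use the sharper direct $L^2$-interpolation bound \eqref{u_err0} instead, but that controls only the interpolation error, not the elliptic-projection error; the passage through Lemma~\ref{error1} is the essential step that transfers the estimate to $u_h^\pm$.
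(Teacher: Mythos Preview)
Your proposal is correct and follows exactly the approach the paper itself takes: the paper states the lemma as an immediate consequence of combining Lemma~\ref{error1} and Lemma~\ref{error2}, without writing out any further details. Your check that $u\in H^2(\Om)$ via Remark~\ref{ruh2} and your remark on why the direct $L^2$-bound \eqref{u_err0} cannot be substituted are accurate and, if anything, more explicit than the paper.
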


\section{Pre-asymptotic error estimates by duality argument}\label{sec-preasy-dual} 
One crucial step in asymptotic error analyses  of FEM for scattering problems is performing the 
 duality argument (or Aubin-Nitsche trick) (cf. \cite{ak79,dss94,ib97,ms10,ms11,sch74}). This argument is usually used to estimate the $L^2$-error of the finite element solution by its $H^1$-error.  In this section we use a modified duality argument to derive pre-asymptotic error estimates under the condition that $\big(\frac{k}{p}\big)^{\frac{1}{p+1}}\frac{kh}{p}$ is sufficiently small. The key idea is to used the elliptic projections from the previous section in the duality-argument step so that we can bound the $L^2$-error of the discrete solution by using the errors of the elliptic projections of the exact solution $u$ and thus obtain the $L^2$-error estimate directly by the modified duality argument.

\begin{mythm}\label{thm-err-1}
Let $u$ and $u_h$ denote the solutions of \eqref{eq1.1a}-\eqref{eq1.1b} and \eqref{ecipfem},respectively. Suppose $0\le \ga\ls 1$. Then there exist constants $C_0$ and $\sigma > 0$ independent of $k, h, p$, and the penalty parameters, such that if 
\begin{equation}\label{econd1}
\frac{kh}{p}\le C_0\Big(\frac{p}{k}\Big)^{\frac{1}{p+1}}, 
\end{equation}
then the following error estimates hold:
\begin{align}\label{error-eh1}
\norme{u-u_h}&\lesssim \Big(1+\frac{k}{p}\Big(\frac{kh}{\sigma p}\Big)^p\Big)\inf_{z_h \in V_h}\norme{u-z_h},\\
\|u-u_h\|_{L^2(\Om)}&\lesssim \Big(\frac{h}{p}+\frac{1}{p}\Big(\frac{kh}{\sigma p}\Big)^p\Big)\inf_{z_h \in V_h}\norme{u-z_h}.\label{error-eh2}
\end{align}
\end{mythm}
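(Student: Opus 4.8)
## Proof Proposal

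The plan is to run a duality argument (Schatz-type) in which the dual problem is used not to bound the $L^2$-error by the $H^1$-error in the usual way, but instead to compare $u_h$ directly with one of the elliptic projections $u_h^\pm$ introduced in Section~\ref{sec-pre}. Set $e_h := u_h - u_h^-$ (or $u_h^+$; the choice is dictated by which sign makes the dual problem match). Since both $u_h$ and $u_h^-$ lie in $V_h$, the function $e_h$ is an admissible test function and an admissible ``argument'' for the sesquilinear form. From the defining equation \eqref{ecipfem} of $u_h$, the relation \eqref{2.6} for $u$, and the definition \eqref{ahuh-} of $u_h^-$, one gets that $e_h$ satisfies a perturbed Galerkin identity: for all $v_h\in V_h$,
\[
a_h(v_h,e_h) + \i k\pd{v_h,e_h} - k^2(v_h,e_h) = -k^2(v_h,\, u - u_h^-) \,,
\]
i.e. $e_h$ is the CIP-FEM error for a Helmholtz problem whose ``exact solution'' is essentially the projection error $\eta := u - u_h^-$, with right-hand side of size $k^2\norml{\eta}{\Om}$ in $L^2$.

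First I would establish a stability (inf-sup / Gårding-plus-duality) bound: there is a constant such that any $w_h\in V_h$ solving $a_h(v_h,w_h)+\i k\pd{v_h,w_h}-k^2(v_h,w_h) = (v_h,G)$ for all $v_h$ satisfies $\norme{w_h}\lesssim k\norml{G}{\Om}$ (and $\norml{w_h}{\Om}\lesssim \frac{k h}{\sigma p}\cdots$, with the appropriate pre-asymptotic factors), provided the condition \eqref{econd1} holds. This is the heart of the argument and the main obstacle. The standard route is: test with $w_h$ itself to get a Gårding-type inequality controlling $\norme{w_h}^2$ by $k^2\norml{w_h}{\Om}^2 + \norml{G}{\Om}\norml{w_h}{\Om}$; then set up the continuous dual problem $-\Delta z - k^2 z = w_h$ in $\Om$, $\partial_n z - \i k z = 0$ on $\Ga$, apply Lemma~\ref{depcomposition} to split $z = z_\E + z_\A$, approximate $z_\E$ by Lemma~\ref{error1}-type estimates and $z_\A$ by Lemma~\ref{lapprox1} (exploiting the analyticity bound \eqref{u_A2}), and use Galerkin orthogonality of $e_h$ against $V_h$ to write $\norml{w_h}{\Om}^2$ as $a_h(w_h, z - \hat z_h) + \i k\pd{w_h, z-\hat z_h} - k^2(w_h, z - \hat z_h)$ — wait, more carefully, one tests the dual equation by $w_h$, subtracts the discrete equation against the best approximation of $z$, and bounds the remainder. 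The crucial point is that the approximation error of $z$ in the energy norm carries a factor $\big(\tfrac{h}{p} + \big(\tfrac{kh}{\sigma p}\big)^p\big)$ times $\norml{w_h}{\Om}$ (since $\|z\|_{H^2}\lesssim \norml{w_h}{\Om}$, $|z|_{H^1}\lesssim k\cdots$, etc.), so that $k^2\norml{w_h}{\Om}^2$ is absorbed precisely when $k\cdot k\cdot\big(\tfrac{kh}{\sigma p}\big)^p \lesssim$ const, which is exactly the algebraic content of the hypothesis $\tfrac{kh}{p}\le C_0\big(\tfrac{p}{k}\big)^{1/(p+1)}$ — rearranged, $k\big(\tfrac{kh}{\sigma p}\big)^{p+1}\lesssim 1$, hence $k\big(\tfrac{kh}{\sigma p}\big)^{p}\lesssim \tfrac{\sigma p}{kh}$, giving the needed smallness after multiplying the remaining factors. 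This bookkeeping, keeping track of powers of $p$ and the constants $\csta$, $\cerr$, is the delicate part.

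Next, combining the stability bound applied with $G = -k^2\eta$ gives
\[
\norme{u_h - u_h^-} \;\lesssim\; k\,\cdot\, k\norml{u - u_h^-}{\Om} \;=\; k^2\norml{u-u_h^-}{\Om},
\]
and then, by Lemma~\ref{error1} (the $L^2$-estimate for the elliptic projection, $\norml{u-u_h^-}{\Om}\lesssim \cerr\tfrac{h}{p}\inf_{z_h}\norme{u-z_h}$), this is $\lesssim k^2\cerr\tfrac hp \inf_{z_h}\norme{u-z_h}$. Using the condition \eqref{econd1} once more to bound $k^2\tfrac hp$ and $\cerr$ appropriately, one sees this is controlled by $\big(\tfrac kp(\tfrac{kh}{\sigma p})^p\big)\inf_{z_h}\norme{u-z_h}$ up to the absorbed constant — wait, one should be slightly more careful and track whether the leading $1$ in \eqref{error-eh1} comes from the triangle inequality term; indeed, combining with the triangle inequality $\norme{u-u_h}\le \norme{u-u_h^-} + \norme{u_h^- - u_h}$ and Lemma~\ref{error1}'s energy estimate $\norme{u-u_h^-}\lesssim \inf_{z_h}\norme{u-z_h}$ produces exactly the factor $\big(1 + \tfrac kp(\tfrac{kh}{\sigma p})^p\big)$ in \eqref{error-eh1}. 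For the $L^2$-estimate \eqref{error-eh2}, I would instead use the sharper $L^2$-output of the stability bound for $u_h - u_h^-$ (gaining the extra $\tfrac hp$ factor from the dual-problem approximation), together with the triangle inequality and the $L^2$-estimate for $u-u_h^-$ from Lemma~\ref{error1}, which already has the $\cerr\tfrac hp$ prefactor; under \eqref{econd1} all the $\cerr$ and $k^2 h/p$ factors collapse to produce the clean bound $\big(\tfrac hp + \tfrac1p(\tfrac{kh}{\sigma p})^p\big)\inf_{z_h}\norme{u-z_h}$. The remaining work is purely the routine verification that the exponent arithmetic in \eqref{econd1} does the absorption at every step, which I would carry out once cleanly and then reuse.
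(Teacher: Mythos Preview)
Your plan has a genuine gap in the power-counting. Even granting the discrete stability bound you propose---namely that under \eqref{econd1} any $w_h\in V_h$ solving the discrete Helmholtz equation with right-hand side $G$ satisfies $\norme{w_h}+k\norml{w_h}{\Om}\lesssim \norml{G}{\Om}$ (this is exactly Corollary~\ref{cor-3}, so it is true)---applying it with $G=-k^2(u-u_h^+)$ and Lemma~\ref{error1} gives only
\[
\norme{u_h-u_h^+}\;\lesssim\; k^2\norml{u-u_h^+}{\Om}\;\lesssim\; \cerr\,\frac{k^2h}{p}\,\inf_{z_h\in V_h}\norme{u-z_h}.
\]
Your assertion that this is controlled by $\frac{k}{p}\big(\frac{kh}{\sigma p}\big)^p\inf_{z_h}\norme{u-z_h}$ under \eqref{econd1} is false for $p\ge 2$: it would require $kh\lesssim (kh/\sigma p)^p$, which fails whenever $kh/\sigma p<1$---and \eqref{econd1} forces $kh/p$ to be small. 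The same loss propagates to the $L^2$ bound. What you have described is essentially the route of Theorem~\ref{u-u_h}, which yields the factor $k^2h/p$, not the sharper $(k/p)(kh/\sigma p)^p$ of \eqref{error-eh1}.

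The paper avoids this loss by applying the duality argument \emph{directly} to $e_h:=u-u_h$, not to $u_h-u_h^\pm$. After introducing the dual solution $w$ and subtracting $w_h^-\in V_h$ via Galerkin orthogonality \eqref{ecipfemorth}, the crucial step is a \emph{second} elliptic-projection replacement: since $u_h^+-u_h\in V_h$ and $a_h(v_h,w-w_h^-)+\i k\langle v_h,w-w_h^-\rangle=0$ for all $v_h\in V_h$, one may replace $e_h$ by $u-u_h^+$ in the $a_h+\i k$ part, arriving at
\[
\norml{e_h}{\Om}^2=a_h(u-u_h^+,w-w_h^-)+\i k\langle u-u_h^+,w-w_h^-\rangle-k^2(e_h,w-w_h^-).
\]
The first terms are a product of \emph{two} projection errors, bounded by $\norme{u-u_h^+}\,\norme{w-w_h^-}\lesssim \big(\frac{h}{p}+\frac{1}{p}(\frac{kh}{\sigma p})^p\big)\norml{e_h}{\Om}\inf_{z_h}\norme{u-z_h}$ via Lemma~\ref{u-_err}; the last term carries $k^2\norml{w-w_h^-}{\Om}\lesssim k^2\big(\frac{h^2}{p^2}+\frac{h}{p^2}(\frac{kh}{\sigma p})^p\big)\norml{e_h}{\Om}$, which is absorbable under \eqref{econd1}. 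This yields \eqref{error-eh2} directly, with $\norme{e_h}$ never appearing on the right-hand side; \eqref{error-eh1} then follows from a G\aa rding-type identity and the already-established $L^2$ bound. The double projection is the missing idea.
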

\begin{proof} 
First, we estimate the $L^2$-error by introducing the dual problem and using the elliptic projections of the solution to the original continuous problem and of the solution to the dual problem. 
Let $e_h:=u-u_h$. Consider the following dual problem:
\begin{align}
-\triangle w-k^2w&=e_h \ \ \ \ \rm{in} \ \ \Om, \label{auxii1}\\
\frac{\partial w}{\partial n}-\i kw&=0 \ \ \ \ \ \rm{on} \ \ \Ga.
\end{align}
Let $u_h^+$ be the elliptic projection of $u$ defined by \eqref{ahuh+} and let $w_h^-\in V_h$ be the elliptic projection defined as  \eqref{ahuh-}. From \eqref{2.6} and \eqref{ecipfem} we have the following orthogonality,
\begin{equation}\label{ecipfemorth}
a_h(e_h,v_h)-k^2(e_h,v_h)+\i k\langle e_h,v_h \rangle=0,\quad\forall v_h\in V_h.
\end{equation}
Testing the conjugated \eqref{auxii1} by $e_h$ and using \eqref{ecipfemorth} with $v_h=w_h^-$, \eqref{ahuh+}--\eqref{ahuh-}  we get
\begin{align}\label{eduall2}
\|e_h\|^2_{L^2(\Om)}&=(\nabla e_h,\nabla w)-k^2(e_h,w)+\i k\langle e_h,w\rangle\\
&=a_h(e_h,w)-k^2(e_h,w)+\i k\langle e_h,w\rangle\nn\\
&=a_h(e_h,w-w_h^-)+\i k\langle e_h,w-w_h^- \rangle-k^2(e_h,w-w_h^-)\nn\\
&=a_h(u-u_h^+,w-w_h^-)+\i k\langle u-u_h^+,w-w_h^- \rangle-k^2(e_h,w-w_h^-).\nn
\end{align}
Similar to Lemma~\ref{u-_err}, we may show that
\begin{align}
\norme{w-w_h^-}& \ls \cerr \Big(\frac{h}{p}+\frac{1}{p}\Big(\frac{kh}{\sigma p}\Big)^p\Big)\norml{e_h}{\Om},\label{ewh-err1}\\
 \|w-w_h^-\|_{L^2(\Om)}& \lesssim \cerr^2\Big(\frac{h^2}{p^2}+\frac{h}{p^2}\Big(\frac{kh}{\sigma p}\Big)^p\Big)\norml{e_h}{\Om}.\label{ewh-err2}
\end{align}
From \eqref{eduall2}, Lemma~\ref{lemma4.1}, Lemma~\ref{error1}, and \eqref{ewh-err1}--\eqref{ewh-err2}, we have
\begin{align*}
\|e_h\|^2_{L^2(\Om)} &\le\|u-u_h^+\|_{1,h}\|w-w_h^-\|_{1,h}+k\norml{u-u_h^+}{\Ga}\norml{w-w_h^-}{\Ga} \\
&\ \ \ \ \ \ \ \ \ \ \ \ \ \ \ \ +k^2\norml{e_h}{\Om}\norml{w-w_h^-}{\Om}\\
&\le C \cerr\Big(\frac{h}{p}+\frac{1}{p}\Big(\frac{kh}{\sigma p}\Big)^p\Big)\norml{e_h}{\Om}\inf_{z_h \in V_h}\norme{u-z_h}\\
&\ \ \ \ \ \ \ \ \ \ \ \ \ \ \ \ +C \cerr^2k^2\Big(\frac{h^2}{p^2}+\frac{h}{p^2}\Big(\frac{kh}{\sigma p}\Big)^p\Big)\norml{e_h}{\Om}^2. \\
\end{align*}
Noting that $\cerr=1+\ga+\frac{kh}{p}\ls 1+\frac{kh}{p}$, there exists a constant $C_0$ independent of $k, h, p$, and the penalty parameters such that 
\begin{equation}\label{econd1-a}\text{ if } \frac{kh}{p}\le C_0\Big(\frac{p}{k}\Big)^{\frac{1}{p+1}} \text{ then }C \cerr^2k^2\Big(\frac{h^2}{p^2}+\frac{h}{p^2}\Big(\frac{kh}{\sigma p}\Big)^p\Big)\le C \cerr^2\Big(C_0^2\Big(\frac{p}{k}\Big)^{\frac{2}{p+1}}+\Big(\frac{C_0}{\sigma}\Big)^{p+1}\Big)\le \frac12,\end{equation}
and as a consequence,
\begin{equation}\label{error-e-h-o}
\|e_h\|_{L^2(\Om)}\ls  \Big(\frac{h}{p}+\frac{1}{p}\Big(\frac{kh}{\sigma p}\Big)^p\Big)\inf_{z_h \in V_h}\norme{u-z_h}.
\end{equation}
That is, \eqref{error-eh2} holds. 

Next we turn to prove \eqref{error-eh1}.  Let $u_h^\pm\in V_h$ be the elliptic projections of $u$ defined by \eqref{ahuh+}--\eqref{ahuh-}  and denote by $\zeta^\pm=u-u_h^\pm$. It follows from Lemma~\ref{lemma4.1} and \eqref{ecipfemorth} that
\begin{align*}
\|e_h\|^2_{1,h}=&\re a_h(e_h,e_h)+\im a_h(e_h,e_h)\\
 =&\re (a_h(e_h,e_h)-k^2(e_h,e_h)+\i k\langle e_h,e_h \rangle)+k^2(e_h,e_h)\\
 &+\im(a_h(e_h,e_h)-k^2(e_h,e_h)+\i k\langle e_h,e_h \rangle)-k\|e_h\|^2_{L^2(\Ga)}\\
 =&\re (a_h(e_h,\zeta^-)-k^2(e_h,\zeta^-)+\i k\langle e_h,\zeta^- \rangle)+k^2\|e_h\|^2_{L^2(\Om)}\\
 &+\im(a_h(e_h,\zeta^-)-k^2(e_h,\zeta^-)+\i k\langle e_h,\zeta^- \rangle)-k\|e_h\|^2_{L^2(\Ga)}\\
 =&\re (a_h(\ze^+,\zeta^-)-k^2(e_h,\zeta^-)+\i k\langle \ze^+,\zeta^- \rangle)+k^2\|e_h\|^2_{L^2(\Om)}\\
 &+\im(a_h(\ze^+,\zeta^-)-k^2(e_h,\zeta^-)+\i k\langle \ze^+,\zeta^- \rangle)-k\|e_h\|^2_{L^2(\Ga)}\\
 \leq& 2\|\ze^+\|_{1,h}\|\ze^-\|_{1,h}+k^2\|\ze^-\|_{L^2(\Om)}^2+2k\|\ze^+\|_{L^2(\Ga)}\|\ze^-\|_{L^2(\Ga)}
 +2k^2\|e_h\|^2_{L^2(\Om)}-k\|e_h\|^2_{L^2(\Ga)}.
 \end{align*}
Therefore, from Lemma~\ref{error1} and noting that $\frac{kh}{p}, \ga\ls 1$,
\begin{align*}
\norme{e_h}&\ls \inf_{z_h \in V_h}\norme{u-z_h}+k\|e_h\|_{L^2(\Om)}.
\end{align*}
which together with \eqref{error-eh2} implies \eqref{error-eh1}. This completes the proof of the theorem.
\end{proof}
\begin{myrem}\label{rthm-err-1}
{\rm (i)} Noting that if $\ga=0$ then the CIP-FEM becomes the FEM, the above theorem and the three corollaries below hold for the standard FEM. 

{\rm (ii)} Noting that the wave length is $\frac{2\pi}{k}$, the condition \eqref{econd1} roughly says that about $\frac{2\pi}{C_0}\big(\frac{k}{p}\big)^{\frac{1}{p+1}}$ degrees of freedom are needed in one wave length.

{\rm (iii)} From \eqref{econd1-a}, clearly,  the theorem holds under the following condition which is a little more general (and a little more complicated as well) than \eqref{econd1}:
 \begin{equation}\label{econd1-b}
\frac{k^2 h^2}{p^2}+\frac{k}{p}\Big(\frac{kh}{\sigma p}\Big)^{p+1} \text{ is sufficiently small.}
\end{equation}

{\rm (iv)} If
\begin{align}
&\frac{k h}{p}+\frac{k}{p}\Big(\frac{kh}{\sigma p}\Big)^p \text{ is sufficiently small,}\label{econd-asy}\end{align}
 then CIP-FEM, as well as the FEM, is pollution free in the norm $\norme{\cdot}$, i.e., $\norme{u-u_h}\lesssim \inf_{z_h \in V_h}\norme{u-z_h}.$
 The condition \eqref{econd-asy} improves a little the condition of $\frac{k h}{p}+k\big(\frac{kh}{\sigma p}\big)^p$ small enough given in \cite{ms10} for the FEM, because, for example, if we choose $k\big(\frac{kh}{\sigma p}\big)^p=c$ for some fixed constant $c$ so that the latter condition does not hold, then $\frac{k h}{p}+\frac{k}{p}\big(\frac{kh}{\sigma p}\big)^p=\big(\frac{c}{k}\big)^{\frac1p}+\frac{c}{p}$ may be small enough for an appropriate choice of $p$ and large $k$. 

{\rm (v)} The pre-asymptotic error estimates for the linear CIP-FEM and FEM ($p=1$) in two and three dimensions are given in the first part of this series \cite{w}. For the pre-asymptotic error estimates for the linear and $hp$ version of the FEM in one dimension, we refer to \cite{ib95a} and \cite{ib97}, respectively. To the best of the authors' knowledge, there have been no pre-asymptotic error estimates for the higher order FEM and CIP-FEM ($p\ge 2$) in two and three dimensions given in the literature so far. 
\end{myrem}

From Remark~\ref{rthm-err-1}(iv) we have the following corollary which gives practical sufficient conditions for asymptotic estimates of the CIP-FEM and the FEM.
\begin{mycor}\label{cor-asy}
Let $u$ and $u_h$ denote the solutions of \eqref{eq1.1a}-\eqref{eq1.1b} and \eqref{ecipfem},respectively. Suppose $0\le \ga\ls 1$. Then the following asymptotic error estimate 
$$\norme{u-u_h}\lesssim \inf_{z_h \in V_h}\norme{u-z_h}$$ holds under any of the following three conditions:
\begin{align}
&\Big(\frac{k}{p}\Big)^{\frac{1}{p}}\frac{kh}{p} \text{ is sufficiently small} , \label{econd-asy-b}\\
&p\gtrsim (\ln k)^{\frac12} \text{ and } \Big(\frac{k}{p}\Big)^{\frac{1}{p+1}}\frac{kh}{p} \text{ is sufficiently small},\label{econd-asy-c}\\
&p\gtrsim \ln k  \text{ and } \frac{k\,h}{p} \text{ is sufficiently small}.\label{emscond1}
\end{align}
\end{mycor}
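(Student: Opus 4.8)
The plan is to derive each of the three conditions \eqref{econd-asy-b}--\eqref{emscond1} from the single sufficient condition \eqref{econd-asy} established in Remark~\ref{rthm-err-1}(iv), namely that $\frac{kh}{p}+\frac{k}{p}\big(\frac{kh}{\sigma p}\big)^p$ be sufficiently small. Thus it suffices to show that each of the three hypotheses forces both summands in \eqref{econd-asy} to be small. The first summand $\frac{kh}{p}$ will be controlled in each case by an elementary estimate, and the bulk of the work is bounding the pollution summand $\frac{k}{p}\big(\frac{kh}{\sigma p}\big)^p$.

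For \eqref{econd-asy-b}: if $\big(\frac{k}{p}\big)^{1/p}\frac{kh}{p}=:\delta$ is small, then in particular $\frac{kh}{p}\le\delta$ is small (since $\frac{k}{p}\gtrsim 1$ because $k\gtrsim 1\ge\ldots$, or more simply because $\big(\frac{k}{p}\big)^{1/p}$ is bounded below on the relevant range — here I would just note $\frac{kh}{p}\le\big(\frac{k}{p}\big)^{1/p}\frac{kh}{p}$ when $\frac{k}{p}\ge 1$, and handle $\frac{k}{p}<1$ trivially since then $\frac{k}{p}\big(\frac{kh}{\sigma p}\big)^p<\frac1{\sigma^p}\le 1$ and one argues directly). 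The key computation is
\[
\frac{k}{p}\Big(\frac{kh}{\sigma p}\Big)^p=\frac{k}{p}\cdot\frac{1}{\sigma^p}\Big(\frac{kh}{p}\Big)^p
=\frac{1}{\sigma^p}\Big(\frac{k}{p}\Big)\Big(\frac{kh}{p}\Big)^p
=\frac{1}{\sigma^p}\Big(\Big(\frac{k}{p}\Big)^{1/p}\frac{kh}{p}\Big)^p=\Big(\frac{\delta}{\sigma}\Big)^p,
\]
which is small once $\delta<\sigma$. So \eqref{econd-asy} holds.

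For \eqref{econd-asy-c}: now set $\delta:=\big(\frac{k}{p}\big)^{1/(p+1)}\frac{kh}{p}$ small and assume $p\gtrsim(\ln k)^{1/2}$. As above $\frac{kh}{p}\le\delta$ is small. For the pollution term, writing $\frac{kh}{p}=\delta\big(\frac{p}{k}\big)^{1/(p+1)}$,
\[
\frac{k}{p}\Big(\frac{kh}{\sigma p}\Big)^p=\frac{k}{p}\cdot\frac{\delta^p}{\sigma^p}\Big(\frac{p}{k}\Big)^{p/(p+1)}
=\frac{\delta^p}{\sigma^p}\Big(\frac{k}{p}\Big)^{1-p/(p+1)}
=\frac{\delta^p}{\sigma^p}\Big(\frac{k}{p}\Big)^{1/(p+1)}.
\]
Now $\big(\frac{k}{p}\big)^{1/(p+1)}\le k^{1/(p+1)}=\exp\!\big(\frac{\ln k}{p+1}\big)$, and since $p\gtrsim(\ln k)^{1/2}$ we have $\frac{\ln k}{p+1}\lesssim\frac{\ln k}{(\ln k)^{1/2}}=(\ln k)^{1/2}$ — this does not stay bounded, so instead I would absorb the growth against $\delta^p$: since $p\to\infty$ as $k\to\infty$ (because $p\gtrsim(\ln k)^{1/2}$) and $\delta<\sigma$, the factor $\big(\frac{\delta}{\sigma}\big)^p\to0$ faster than any polynomial, while $k^{1/(p+1)}\le k$ trivially and in fact $\ln\big(k^{1/(p+1)}\big)=\frac{\ln k}{p+1}\lesssim(\ln k)^{1/2}$, so $k^{1/(p+1)}\le e^{C(\ln k)^{1/2}}$; comparing, $\big(\frac{\delta}{\sigma}\big)^p e^{C(\ln k)^{1/2}}\le e^{-c(\ln k)^{1/2}\ln(\sigma/\delta)+C(\ln k)^{1/2}}\to0$ once $\ln(\sigma/\delta)>C/c$, i.e. once $\delta$ is small enough. \textbf{This is the main obstacle} — the careful bookkeeping showing that the super-polynomial decay $\big(\frac{\delta}{\sigma}\big)^p$ beats the factor $\big(\frac{k}{p}\big)^{1/(p+1)}$ under the hypothesis $p\gtrsim(\ln k)^{1/2}$.

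For \eqref{emscond1}: assume $p\gtrsim\ln k$ and $\frac{kh}{p}$ small. Only the pollution term needs attention; with $\frac{kh}{p}=:\rho$ small,
\[
\frac{k}{p}\Big(\frac{kh}{\sigma p}\Big)^p=\frac{k}{p}\Big(\frac{\rho}{\sigma}\Big)^p\le k\Big(\frac{\rho}{\sigma}\Big)^p
=\exp\!\big(\ln k-p\ln(\sigma/\rho)\big)\le\exp\!\big(\ln k-c\ln k\,\ln(\sigma/\rho)\big),
\]
which tends to $0$ (indeed is $\le 1$) as soon as $c\ln(\sigma/\rho)>1$, i.e. once $\rho=\frac{kh}{p}$ is small enough relative to $\sigma$ and $c$. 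Hence \eqref{econd-asy} holds and the asymptotic estimate follows from Remark~\ref{rthm-err-1}(iv). I would close by remarking that \eqref{emscond1} recovers the Melenk--Sauter condition \eqref{emscond2a}, and that each implication above only needs the corresponding ``sufficiently small'' threshold to be chosen in terms of the absolute constant $\sigma$ from Theorem~\ref{thm-err-1} (and, for \eqref{econd-asy-c} and \eqref{emscond1}, the implicit constant in the hypothesis $p\gtrsim\cdots$).
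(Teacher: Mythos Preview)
Your approach is correct in spirit and the computations for \eqref{econd-asy-b} and \eqref{emscond1} are fine, but you are working harder than necessary for \eqref{econd-asy-c}. The paper's proof is a single line: it observes that \eqref{econd-asy-b} implies \eqref{econd-asy} (your first computation), and then that \eqref{econd-asy-c} and \eqref{emscond1} each imply \eqref{econd-asy-b}. For \eqref{econd-asy-c}, writing $\delta=(k/p)^{1/p}\tfrac{kh}{p}$ and $\delta'=(k/p)^{1/(p+1)}\tfrac{kh}{p}$, one has
\[
\delta=\delta'\cdot\Big(\frac{k}{p}\Big)^{\frac{1}{p}-\frac{1}{p+1}}=\delta'\cdot\Big(\frac{k}{p}\Big)^{\frac{1}{p(p+1)}}\le\delta'\cdot k^{\frac{1}{p(p+1)}}=\delta'\cdot e^{\frac{\ln k}{p(p+1)}}\lesssim \delta',
\]
since $p\gtrsim(\ln k)^{1/2}$ forces $p(p+1)\gtrsim\ln k$. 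Similarly for \eqref{emscond1}, $p\gtrsim\ln k$ gives $(k/p)^{1/p}\le k^{1/p}\lesssim 1$, so $\delta\lesssim\tfrac{kh}{p}$. This chain of implications completely sidesteps the balancing act you flagged as ``the main obstacle'': rather than pitting $(\delta'/\sigma)^p$ against $(k/p)^{1/(p+1)}$, one absorbs the awkward factor into the comparison with condition \eqref{econd-asy-b} and then invokes the clean identity $\frac{k}{p}\big(\frac{kh}{\sigma p}\big)^p=(\delta/\sigma)^p$ once.

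Two small things in your write-up: your handling of the case $k/p<1$ for the first summand in \eqref{econd-asy-b} is not quite right (the bound $\frac{k}{p}\big(\frac{kh}{\sigma p}\big)^p<\sigma^{-p}$ presumes $\frac{kh}{p}<1$, which is what you are trying to show); the correct fix is that $(k/p)^{1/p}\ge e^{-1/e}$ uniformly for $k\ge 1$, $p\ge 1$, so $\frac{kh}{p}\lesssim\delta$ in all cases. And in your argument for \eqref{econd-asy-c}, the claim ``$\to 0$ once $\ln(\sigma/\delta)>C/c$'' only gives the quantity bounded by $1$, not by the fixed threshold required for \eqref{econd-asy}; you would need to take $\delta$ a bit smaller (and treat small $k$ separately), though of course the paper's route avoids this entirely.
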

\begin{proof}\eqref{econd-asy-b} implies \eqref{econd-asy} and the conditions \eqref{econd-asy-c}--\eqref{emscond1} are all sufficient conditions of \eqref{econd-asy-b}.
\end{proof}
\begin{myrem}
{\rm (i)} Our asymptotic estimates hold for both the CIP-FEM and the FEM. Melenk and Sauter \cite{ms10,ms11} proved recently the asymptotic error estimates for the FEM under either of the condition \eqref{emscond1} and the following one:
\begin{align}
&p=O(1) \text{ fixed independent of } k\text{ and } k^{1+\frac1p}h \text{ is sufficiently small}.\label{emscond2}
\end{align}
The above condition is similar to the condition \eqref{econd-asy-b} but we prefer the later one because it does not require $p=O(1)$.

{\rm (ii)} The condition \eqref{econd-asy-c} says that the theorem~{\rm\ref{thm-err-1}} gives actually asymptotic estimates in the norm $\norme{\cdot}$ under the further condition of $p\gtrsim (\ln k)^{\frac12}$. This condition is not strict from the practical point of view. For example if we take $p\simeq (\ln k)^{\frac12}$ then $p$ grows slowly as $k$ increases, and so is $\big(\frac{k}{p}\big)^{\frac{1}{p+1}}\simeq k^{\frac{1}{\sqrt{\ln k}}}$ which is $o(k^\ep)$ for any positive constant $\ep$.

{\rm (iii)} The condition \eqref{econd-asy-b} is convenient for fixed $p$. The condition \eqref{emscond1} requires $p\gtrsim \ln k$ and is perfect when implementations of sufficiently high order CIP-FEM or FEM  are available. And the condition \eqref{econd-asy-c} which requires merely $p\gtrsim (\ln k)^{\frac12}$ is suitable when only ``medium" order methods can be used.  
\end{myrem}

From Theorem~\ref{thm-err-1} and Lemma~\ref{error2}, we have the following corollary which gives estimates for $H^2$ regular solutions.
\begin{mycor}\label{cor-1}
Suppose the solution $u\in H^2(\Om)$. Under the conditions of Theorem~\ref{thm-err-1}, there hold the following estimates:
\begin{align}\label{ecor-1-a}
\norm{u-u_h}_{1,h}&\lesssim\Big(\frac{h}{p}+\frac{1}{p}\Big(\frac{kh}{\sigma p}\Big)^p+\frac{k}{p^2}\Big(\frac{kh}{\sigma p}\Big)^{2p}\Big)C_{f,g},\\
\|u-u_h\|_{L^2(\Om)}&\lesssim \Big(\frac{h^2}{p^2}+\frac{1}{p^2}\Big(\frac{kh}{\sigma p}\Big)^{2p}\Big)C_{f,g}.\label{ecor-1-b}
\end{align}
\end{mycor}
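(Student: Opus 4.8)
The plan is to combine Theorem~\ref{thm-err-1} with the explicit approximation estimate of Lemma~\ref{error2} and the decomposition of the solution from Lemma~\ref{depcomposition}. Since $u\in H^2(\Om)$, Lemma~\ref{error2} provides a particular $\hat u_h\in V_h$ with
\[
\norme{u-\hat u_h}\lesssim \cerr\Big(\tfrac hp+\tfrac1p\big(\tfrac{kh}{\sigma p}\big)^p\Big)C_{f,g},
\qquad
\norml{u-\hat u_h}{\Om}\lesssim\Big(\tfrac{h^2}{p^2}+\tfrac{h}{p^2}\big(\tfrac{kh}{\sigma p}\big)^p\Big)C_{f,g},
\]
and in particular $\inf_{z_h\in V_h}\norme{u-z_h}$ is bounded by the first right-hand side. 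Under the hypothesis $\frac{kh}{p}\le C_0(\frac pk)^{\frac1{p+1}}$ and $0\le\ga\ls1$ we have $\cerr=\big(1+\ga+\frac{kh}{p}\big)^{1/2}\ls1$, so the factor $\cerr$ can be absorbed into the implied constant throughout.

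First I would prove \eqref{ecor-1-a}. Bound $\norm{u-u_h}_{1,h}\le\norme{u-u_h}$ and apply \eqref{error-eh1} of Theorem~\ref{thm-err-1}, then insert the bound on $\inf_{z_h\in V_h}\norme{u-z_h}$ just recalled. This gives
\[
\norm{u-u_h}_{1,h}\lesssim\Big(1+\tfrac kp\big(\tfrac{kh}{\sigma p}\big)^p\Big)\Big(\tfrac hp+\tfrac1p\big(\tfrac{kh}{\sigma p}\big)^p\Big)C_{f,g}.
\]
Expanding the product yields four terms; the term $\frac hp$ and the term $\frac1p\big(\frac{kh}{\sigma p}\big)^p$ come from the leading factor $1$, the cross term $\frac kp\big(\frac{kh}{\sigma p}\big)^p\cdot\frac hp=\frac{kh}{p^2}\big(\frac{kh}{\sigma p}\big)^p$ is, up to the harmless constant $\sigma$, dominated by $\frac1p\big(\frac{kh}{\sigma p}\big)^p$ since $\frac{kh}{p}\ls1$, and the last term is exactly $\frac{k}{p^2}\big(\frac{kh}{\sigma p}\big)^{2p}$. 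Collecting these gives \eqref{ecor-1-a}.

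Next I would prove \eqref{ecor-1-b} the same way, using \eqref{error-eh2} instead:
\[
\norml{u-u_h}{\Om}\lesssim\Big(\tfrac hp+\tfrac1p\big(\tfrac{kh}{\sigma p}\big)^p\Big)\inf_{z_h\in V_h}\norme{u-z_h}
\lesssim\Big(\tfrac hp+\tfrac1p\big(\tfrac{kh}{\sigma p}\big)^p\Big)^2C_{f,g}.
\]
Squaring the sum produces $\frac{h^2}{p^2}$, the cross term $\frac{2h}{p^2}\big(\frac{kh}{\sigma p}\big)^p$, and $\frac1{p^2}\big(\frac{kh}{\sigma p}\big)^{2p}$; the cross term is bounded by the geometric-mean inequality (or simply by $\frac{h^2}{p^2}+\frac1{p^2}\big(\frac{kh}{\sigma p}\big)^{2p}$), so the whole expression collapses to $\big(\frac{h^2}{p^2}+\frac1{p^2}\big(\frac{kh}{\sigma p}\big)^{2p}\big)C_{f,g}$, which is \eqref{ecor-1-b}. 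There is essentially no obstacle here: the only thing to watch is the bookkeeping of the constant $\sigma$ when absorbing cross terms (replacing $\sigma$ by a slightly smaller constant if necessary, still independent of $k,h,p$ and the penalty parameters), and the routine use of $\frac{kh}{p}\ls1$ and $\cerr\ls1$ guaranteed by \eqref{econd1}. The substance of the corollary is entirely contained in Theorem~\ref{thm-err-1} and Lemma~\ref{error2}; this step is just specialization and algebraic simplification.
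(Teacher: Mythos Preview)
Your proposal is correct and follows exactly the route indicated in the paper, which simply states that the corollary follows ``From Theorem~\ref{thm-err-1} and Lemma~\ref{error2}''; you have merely supplied the routine algebraic simplifications (expanding the product, absorbing the cross terms via $\tfrac{kh}{p}\lesssim 1$ and AM--GM, and noting $\cerr\lesssim 1$ under \eqref{econd1}). The mention of Lemma~\ref{depcomposition} in your plan is superfluous since the decomposition is already baked into Lemma~\ref{error2}, but this does not affect the argument.
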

\begin{myrem} {\rm (i)} If $p=o\big((\ln k)^{\frac12}\big)$ then the condition \eqref{econd1} is weaker then the condition \eqref{econd-asy-b} for asymptotic error estimates, and the estimate \eqref{ecor-1-a} is a pre-asymptotic one with pollution term $O\Big(\frac{k}{p^2}\big(\frac{kh}{\sigma p}\big)^{2p}\Big)$.  

{\rm (ii)} Pre-asymptotic error analysis and dispersion analysis are two main tools to understand numerical behaviors in short wave computations. The later one which is usually performed on \emph{structured} meshes estimates the error between the wave number $k$ of the continuous problem and some discrete wave number $\om$ \cite{Ainsworth04, Ainsworth09, dbb99, harari97, ib95a, ib97, thompson94, thompson06}. In particular, it is shown for the $hp$-FEM (cf. \cite{Ainsworth04, ib97}) that
\begin{align*}
k-\om=\left\{\begin{array}{ll}
O\big(k^{2p+1}h^{2p}\big) &\text{ if } k h\ll 1, \\
O\Big(\dfrac{k}{p}\Big(\dfrac{e k h}{4p}\Big)^{2p}\Big)&\text{ if } k h\gg 1 \text{ and } 2p+1>\dfrac{e k h}{2}.
\end{array}\right.
\end{align*}
By contrast, our pre-asymptotic error analysis gives the error between the exact solution $u$ and the discrete solution $u_h$ and works for \emph{unstructured} meshes.  Clearly, our pollution error bounds in $H^1$-norm coincide with the phase difference $\abs{k-\om}$ as above.

{\rm (iii)} Our pre-asymptotic estimates in Theorem~\ref{thm-err-1} and Corollary~\ref{cor-1}--\ref{cor-2}  hold in particular under the following condition
\begin{equation}\label{econd2}
p=O(1) \text{ fixed independent of } k\text{ and } k^{1+\frac{1}{p+1}}h \text{ is small enough},
\end{equation} 
which extends the asymptotic range given by \eqref{emscond2}. 
\end{myrem}

By combining Theorem~\ref{thm-err-1} and Lemma~\ref{lapprox1}, we have the following corollary which gives estimates for $H^s$ regular solutions $(s>2)$.
\begin{mycor}\label{cor-2}
Suppose the solution $u\in H^s(\Om)$, $s>2$. Let $\mu = \min \{p + 1, s\}$. Under the conditions of Theorem~\ref{thm-err-1}, there hold the following estimates:
\begin{align}\label{ecor-2-a}
\norm{u-u_h}_{1,h}&\lesssim\Big(1+\frac{k}{p}\Big(\frac{kh}{\sigma p}\Big)^p\Big)\frac{ h^{\mu-1}}{p^{s-1}}\|u\|_{H^s(\Omega)},\\
\|u-u_h\|_{L^2(\Om)}&\lesssim \Big(\frac{h}{p}+\frac{1}{p}\Big(\frac{kh}{\sigma p}\Big)^p\Big)\frac{ h^{\mu-1}}{p^{s-1}}\|u\|_{H^s(\Omega)},\label{ecor-2-b}
\end{align}
where the invisible constants are independent on $p, h, k$, and the penalty parameters, but may depend on $s$. In particular, if $u\in H^{p+1}(\Om)$ is an oscillating solution in the sense of \cite[Definition 3.2]{ib97}, i.e.
\[\norm{u}_{H^{p+1}(\Om)}\ls k^p,\]
then the following estimates hold:
\begin{align}\label{ecor-2-c}
\norm{u-u_h}_{1,h}&\lesssim C(p)\Big(\Big(\frac{kh}{\sigma p}\Big)^p+\frac{k}{p}\Big(\frac{kh}{\sigma p}\Big)^{2p}\Big),\\
\|u-u_h\|_{L^2(\Om)}&\lesssim \frac{C(p)}{k}\Big(\Big(\frac{kh}{\sigma p}\Big)^{p+1}+\frac{k}{p}\Big(\frac{kh}{\sigma p}\Big)^{2p}\Big).\label{ecor-2-d}
\end{align}
\end{mycor}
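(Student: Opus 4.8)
The plan is to combine the abstract a priori bounds of Theorem~\ref{thm-err-1} with the $hp$-approximation estimates of Lemma~\ref{lapprox1}. Since the function $\hat u_h\in V_h$ supplied by Lemma~\ref{lapprox1} satisfies $\norme{u-\hat u_h}\ls\cerr\,h^{\mu-1}p^{-(s-1)}\norm{u}_{H^s(\Om)}$, we get at once
\[
\inf_{z_h\in V_h}\norme{u-z_h}\le\norme{u-\hat u_h}\ls\cerr\,\frac{h^{\mu-1}}{p^{s-1}}\norm{u}_{H^s(\Om)}.
\]
The key preliminary observation is that under the hypotheses of Theorem~\ref{thm-err-1} one has $\cerr\ls1$: by assumption $0\le\ga\ls1$, and the condition $\frac{kh}{p}\le C_0\big(\frac{p}{k}\big)^{1/(p+1)}$ forces $\frac{kh}{p}\ls1$, because $\big(\frac{p}{k}\big)^{1/(p+1)}\le1$ when $p\le k$ while $\big(\frac{p}{k}\big)^{1/(p+1)}\ls p^{1/(p+1)}\ls1$ when $p>k$ (using $k\gtrsim1$); hence $\cerr=(1+\ga+\frac{kh}{p})^{1/2}\ls1$, and the factor $\cerr$ in the displayed best-approximation bound may be dropped.

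Plugging this bound into \eqref{error-eh1} and \eqref{error-eh2} and using $\norm{\cdot}_{1,h}\le\norme{\cdot}$ yields directly
\[
\norm{u-u_h}_{1,h}\ls\Big(1+\tfrac{k}{p}\big(\tfrac{kh}{\si p}\big)^p\Big)\frac{h^{\mu-1}}{p^{s-1}}\norm{u}_{H^s(\Om)},\qquad
\norm{u-u_h}_{L^2(\Om)}\ls\Big(\tfrac{h}{p}+\tfrac1p\big(\tfrac{kh}{\si p}\big)^p\Big)\frac{h^{\mu-1}}{p^{s-1}}\norm{u}_{H^s(\Om)},
\]
which are exactly \eqref{ecor-2-a}--\eqref{ecor-2-b} (the hidden constant now inherits the possible $s$-dependence of Lemma~\ref{lapprox1}).

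For the oscillating case I would specialize to $s=p+1$, so that $\mu=p+1$ and $\mu-1=s-1=p$, and invoke $\norm{u}_{H^{p+1}(\Om)}\ls k^p$. Then $\frac{h^{p}}{p^{p}}\norm{u}_{H^{p+1}(\Om)}\ls\big(\frac{kh}{p}\big)^p=\si^p\big(\frac{kh}{\si p}\big)^p$; substituting into the two displays above, distributing the prefactors, using $\frac{h}{p}=\frac{\si}{k}\cdot\frac{kh}{\si p}$ in the $L^2$ bound, and absorbing $\si^p$ together with the $s$-dependent (equivalently $p$-dependent) constant into a single $C(p)$ gives
\[
\norm{u-u_h}_{1,h}\ls C(p)\Big(\big(\tfrac{kh}{\si p}\big)^p+\tfrac{k}{p}\big(\tfrac{kh}{\si p}\big)^{2p}\Big),\qquad
\norm{u-u_h}_{L^2(\Om)}\ls\frac{C(p)}{k}\Big(\big(\tfrac{kh}{\si p}\big)^{p+1}+\tfrac{k}{p}\big(\tfrac{kh}{\si p}\big)^{2p}\Big),
\]
i.e.\ \eqref{ecor-2-c}--\eqref{ecor-2-d}.

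Since everything reduces to substitutions into already-proved results, there is no genuine obstacle here. The only points requiring a modicum of care are (i) verifying $\cerr\ls1$ under the standing hypotheses so that Lemma~\ref{lapprox1} can be applied with the clean rate $h^{\mu-1}p^{-(s-1)}$, and (ii) the elementary algebra of re-expressing $\big(\frac{kh}{p}\big)^p$ and $\frac{h}{p}$ through the normalized quantity $\frac{kh}{\si p}$ so that the bounds take the stated form, while keeping track of which constants are allowed to depend on $s$ (hence on $p$ in the oscillating case).
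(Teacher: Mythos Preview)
Your proposal is correct and follows exactly the route the paper indicates: the corollary is stated as an immediate consequence of combining Theorem~\ref{thm-err-1} with Lemma~\ref{lapprox1}, and you have simply (and correctly) filled in the substitutions, including the observation that $\cerr\ls1$ under the standing hypotheses and the algebra needed for the oscillating case with $s=p+1$.
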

\begin{myrem} {\rm (i)} Corollary~\ref{cor-2} certainly holds for $s=2$. We exclude the the case $s=2$ in this corollary because the estimates in \eqref{ecor-2-a}--\eqref{ecor-2-b} with $s=2$ are worse than those in \eqref{ecor-1-a}--\eqref{ecor-1-b}. For $s>2$, Corollary~\ref{cor-2} gives optimal convergence order in $h$ and $p$ while Corollary~\ref{cor-1} gives only first order convergence in $h$ and $p$.

{\rm (ii)} If $p=O(1)$ and $u$ is an oscillating solution then \eqref{ecor-2-c} shows clearly that the pollution error in $H^1$-norm is again bounded by $O(k^{2p+1}h^{2p})$.
\end{myrem}

By combining Lemma~\ref{depcomposition} and Corollary~\ref{cor-1} we have the following stability estimates for the CIP-FEM (FEM).
\begin{mycor}\label{cor-3}
Suppose the solution $u\in H^2(\Om)$. Under the conditions of Theorem~\ref{thm-err-1}, there hold the following estimates:
\begin{align*}
\norm{u_h}_{1,h}+k\norml{u_h}{\Om}&\lesssim C_{f,g},
\end{align*}
and hence the CIP-FEM is well-posed.
\end{mycor}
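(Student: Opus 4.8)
The plan is to combine the $H^1$- and $L^2$-error estimates of Corollary~\ref{cor-1} with the regularity bound $\norm{u}_{H^2(\Om)}\ls k C_{f,g}$ from Remark~\ref{ruh2}, and then apply the triangle inequality in each of the two (semi-)norms $\norm{\cdot}_{1,h}$ and $\norml{\cdot}{\Om}$. First I would write $\norm{u_h}_{1,h}\le\norm{u-u_h}_{1,h}+\norm{u}_{1,h}$. For the first term, \eqref{ecor-1-a} gives $\norm{u-u_h}_{1,h}\ls\big(\frac{h}{p}+\frac1p(\frac{kh}{\sigma p})^p+\frac{k}{p^2}(\frac{kh}{\sigma p})^{2p}\big)C_{f,g}$; under the hypothesis \eqref{econd1} one checks, exactly as in \eqref{econd1-a}, that $\frac{h}{p}\ls 1$, that $\frac1p(\frac{kh}{\sigma p})^p\ls 1$, and that $\frac{k}{p^2}(\frac{kh}{\sigma p})^{2p}\ls\frac{k}{p^2}\big(\frac{C_0}{\sigma}\big)^{2p}(\frac{p}{k})^{\frac{2p}{p+1}}\ls 1$, so the whole coefficient is $O(1)$ and hence $\norm{u-u_h}_{1,h}\ls C_{f,g}$. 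For the second term, $\norm{u}_{1,h}=\norml{\na u}{\Om}$ (since $u\in H^2(\Om)$ makes all the jump terms vanish) $\le\norm{u}_{H^1(\Om)}\ls k^{-1}\norm{u}_{H^2(\Om)}\cdot k\ls C_{f,g}$, using Remark~\ref{ruh2} together with $|u|_{H^1(\Om)}\ls C_{f,g}$ which follows from Lemma~\ref{depcomposition} (the bounds $k|\ue|_{H^1}+k\norm{\ua}_{L^2}+|\ua|_{H^1}\ls C_{f,g}$ give $|u|_{H^1(\Om)}\le|\ue|_{H^1}+|\ua|_{H^1}\ls C_{f,g}$, and likewise $k\norml{u}{\Om}\ls C_{f,g}$). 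This yields $\norm{u_h}_{1,h}\ls C_{f,g}$.

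For the $L^2$ part I would argue analogously: $k\norml{u_h}{\Om}\le k\norml{u-u_h}{\Om}+k\norml{u}{\Om}$. By \eqref{ecor-1-b}, $k\norml{u-u_h}{\Om}\ls k\big(\frac{h^2}{p^2}+\frac1{p^2}(\frac{kh}{\sigma p})^{2p}\big)C_{f,g}$; under \eqref{econd1} both $k\frac{h^2}{p^2}=\frac1k\cdot\frac{k^2h^2}{p^2}\ls\frac1k C_0^2(\frac{p}{k})^{\frac{2}{p+1}}\ls 1$ and $\frac{k}{p^2}(\frac{kh}{\sigma p})^{2p}\ls 1$ (same computation as above), so $k\norml{u-u_h}{\Om}\ls C_{f,g}$. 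The remaining term $k\norml{u}{\Om}\ls C_{f,g}$ was already observed from Lemma~\ref{depcomposition}. Adding the two displays gives $\norm{u_h}_{1,h}+k\norml{u_h}{\Om}\ls C_{f,g}$.

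Finally, well-posedness of the CIP-FEM follows from this a priori bound by a standard linear-algebra argument: \eqref{ecipfem} is a square linear system for $u_h\in V_h$, so it suffices to show uniqueness. If $f=g=0$ then $C_{f,g}=0$ and the corresponding exact solution is $u\equiv 0$ (which lies in $H^2(\Om)$), so the estimate just proved forces $\norm{u_h}_{1,h}+k\norml{u_h}{\Om}=0$, whence $\na u_h\equiv 0$ on $\Om$ and $u_h\equiv 0$ on $\Ga$; combined with $u_h\in H^1(\Om)$ this gives $u_h\equiv 0$. Hence the homogeneous system has only the trivial solution and \eqref{ecipfem} is uniquely solvable.

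I do not anticipate a genuine obstacle here; the only point requiring a little care is verifying that every coefficient appearing in \eqref{ecor-1-a}--\eqref{ecor-1-b} is bounded under \eqref{econd1} — in particular that the pollution-type coefficient $\frac{k}{p^2}(\frac{kh}{\sigma p})^{2p}$ is $O(1)$ — which is precisely the computation already carried out in \eqref{econd1-a} and so can be quoted rather than redone.
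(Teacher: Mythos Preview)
Your proposal is correct and follows exactly the route the paper indicates (``By combining Lemma~\ref{depcomposition} and Corollary~\ref{cor-1}''): triangle inequality in each norm, Corollary~\ref{cor-1} for the error part, Lemma~\ref{depcomposition} for $|u|_{H^1(\Om)}+k\|u\|_{L^2(\Om)}\lesssim C_{f,g}$, and the verification that all coefficients in \eqref{ecor-1-a}--\eqref{ecor-1-b} are $O(1)$ under \eqref{econd1}. One trivial simplification in your well-posedness step: once $k\norml{u_h}{\Om}=0$ you have $u_h\equiv 0$ directly, so the detour through $\nabla u_h\equiv 0$ and boundary values is unnecessary.
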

\begin{myrem} If $\ga=0$, then this corollary gives the following stability estimate for the standard FEM under the same condition.
 \begin{align*}
\norml{\na u_h}{\Om}+k\norml{u_h}{\Om}\lesssim C_{f,g}.
\end{align*}
\end{myrem}

\section{Stability estimates for the CIP-FEM}\label{sec-sta} 
In the previous section we have shown that the CIP-FEM and the FEM are stable under the condition that $\frac{kh}{p}\le C_0\big(\frac{p}{k}\big)^{\frac{1}{p+1}}$ and $0\le\ga\ls 1$. The goal of this section is to derive stability estimates (or a priori estimates) for the CIP-FEM \eqref{ecipfem} for any $k, h, p$, and $\ga>0$.

Some stability estimates have been proved for the discontinuous Galerkin methods (cf. \cite{fw09,fw11, fx})
and the spectral-Galerkin methods (cf. \cite{sw07}). Their analyses mimic the stability analysis for the PDE solutions given in
\cite{cf06,hetmaniuk07,melenk95b}. The key idea is to use the test functions $v=u_h$ and $v=(x-x_{\Omega})\cdot \nabla u_h$, respectively, and use the Rellich identity(for the Laplacian), where $x_{\Omega}$ is a point such that the domain $\Omega$ is strictly star-shaped with respect to it (see \eqref{def1}). As for our CIP-FEM \eqref{ecipfem}, although the test
function $v_h=u_h$ can be still be used, the test function $v_h=(x-x_{\Omega})\cdot \nabla u_h$ does not apply
since it is discontinuous and hence not in the test space $V_h$. We surround this difficulty by taking the $L^2$ projection of $(x-x_{\Omega})\cdot \nabla u_h$ to the finite element space $V_h$. To analyze the error of the $L^2$ projection, we recall the so-called Oswald interpolation operator $\ios $,
which has been analyzed in \cite{be07, hw96, kp94, oswald93}. Let $W_h$ be the space of (discontinuous) piecewise mapped polynomials: 
\begin{align}\label{eW_h}
W_h:=\prod\limits_{K \in \mathcal{T}_h }\mathcal{P}_p(F_K^{-1}(K)).
\end{align}
\begin{mylem} \label{Ios}
There exist an operator $\ios : W_h\mapsto V_h$ and a constant $C_p$ depending only on $p$, such that, for all $K \in \mathcal{T}_h$, the following estimate holds:
\begin{align}
&\forall v_h \in W_h, \ \ \ \|v_h-\ios  v_h\|_{L^2(K)}\ls C_p h_K^{\frac{1}{2}}\sum_{e\in \mathcal E_h^I\cap K}\|[v_h]\|_{L^2(e)},
\end{align}
where $C_p=p^{-1}$ for Cartesian meshes and $C_p=p^{\frac{d-3}{2}}$ for triangulations $(d=2,3)$.
\end{mylem}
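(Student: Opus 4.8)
The plan is to take $\ios$ to be the classical nodal-averaging (Oswald) operator and to estimate its local error by an affine/scaling argument on the reference element, keeping careful track of the polynomial degree $p$. First I would fix a unisolvent set of nodal points for $\mathcal P_p(\hat K)$ --- Gauss--Lobatto-type points in the Cartesian case and the principal lattice of order $p$ in the simplicial case --- and transport them to each $K\in\T_h$ by $F_K$. Because neighbouring element maps induce the same parametrization on a shared edge/face, the nodes lying on an interface $e\in\E_h^I$ are common to the two adjacent elements. For each node $z$, set $(\ios v_h)(z)$ equal to the arithmetic mean of the values $v_h|_K(z)$ over all $K$ containing $z$; in particular $(\ios v_h)(z)=v_h(z)$ for nodes interior to a single element. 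Since the nodal values of $\ios v_h$ agree across every interface, $\ios v_h\in H^1(\Om)$, and being elementwise a mapped polynomial it lies in $V_h$.

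Next I would estimate $w_K:=(v_h-\ios v_h)|_K$, which is a mapped polynomial all of whose nodal values vanish except at nodes $z$ lying on some interface of $K$, where the value is an average of differences $v_h|_K(z)-v_h|_{K'}(z)=\pm[v_h](z)$ over the faces $e\in\E_h^I$ of $K$ through $z$. Pulling back to $\hat K$, the $L^2(K)$-norm of $w_K$ is, up to the volume Jacobian scaling $h_K^{d/2}$, comparable to the $L^2(\hat K)$-norm of the corresponding reference polynomial, which in turn is bounded by a degree-dependent constant times the Euclidean norm of its nodal-coefficient vector; combining this with a trace-type norm equivalence relating the nodal values of a degree-$p$ polynomial on a face to its $L^2$-norm on that face (which after unscaling produces the factor $h_K^{-(d-1)/2}\|[v_h]\|_{L^2(e)}$), one arrives at
\[
\|w_K\|_{L^2(K)}\ls C_p\, h_K^{1/2}\sum_{e\in\E_h^I\cap K}\|[v_h]\|_{L^2(e)},
\]
the net power $h_K^{1/2}$ being exactly the volume-versus-surface measure scaling. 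This is the claimed estimate on $K$.

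The main obstacle is pinning down the \emph{sharp} $p$-dependence of $C_p$. A crude argument using generic inverse inequalities and $L^\infty$--$L^2$ bounds for polynomials on $\hat K$ would give only a suboptimal power of $p$; the stated constants $C_p=p^{-1}$ for tensor-product (Cartesian) meshes and $C_p=p^{(d-3)/2}$ for simplicial meshes $(d=2,3)$ rely on the precise asymptotics of the relevant nodal norm equivalences --- in the Cartesian case by exploiting the one-dimensional tensor structure and the Gauss--Lobatto weights, and in the simplicial case by invoking the degree-explicit $L^2$-stability of the averaging operator established in \cite{kp94,oswald93} (see also \cite{be07,hw96}). With those degree-explicit equivalences in hand, the rest is the routine scaling computation sketched above.
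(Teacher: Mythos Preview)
The paper does not give its own proof of this lemma; it simply recalls the Oswald interpolation operator and cites \cite{be07,hw96,kp94,oswald93} for the result. So there is no argument in the paper to compare your sketch against --- what you have written is essentially an outline of the construction carried out in those references, and you correctly identify that the nontrivial part (the sharp $p$-dependence of $C_p$) is exactly what must be imported from \cite{be07,kp94,oswald93} rather than obtained from generic inverse inequalities.

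One technical point you gloss over: at nodes lying on lower-dimensional subfaces of $K$ (vertices in 2D, edges or vertices in 3D), more than two elements meet, so the difference $v_h|_K(z)-(\text{average})$ is not a single jump $[v_h](z)$ across one face of $K$. The standard remedy is a telescoping argument along a chain of face-adjacent elements in the patch around $z$, which bounds that difference by a finite sum of jumps across faces in a neighbourhood of $K$ --- not only the faces of $K$ itself. Under shape-regularity this patch has bounded cardinality and the extra faces are absorbed into the implicit constant (and, when the lemma is used, into the global sum over $\E_h^I$ as in Lemma~\ref{lem-L2}). This detail is handled explicitly in the cited references; just be aware that your description ``over the faces $e\in\E_h^I$ of $K$ through $z$'' is slightly too narrow as written.
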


\begin{mylem} \label{lem-L2}
Let $Q_h$ be the $L^2$ projection onto $V_h$ and let $C_p$ be the constant in Lemma~\ref{Ios}. Then the following estimate holds:
\begin{align}
&\forall v_h \in W_h, \ \ \ \|v_h-Q_h v_h\|_{L^2(\T_h)}\ls C_p \bigg(\sum_{e\in \E_h^I}h_e\|[v_h]\|_{L^2(e)}^2\bigg)^{\frac12},\label{eL2a}\\
&\forall v_h \in  W_h, \ \ \ \|\nabla(v_h-Q_h  v_h)\|_{L^2(\T_h)}\ls \frac{p ^2 C_p}{h} \bigg(\sum_{e\in \E_h^I}h_e\|[v_h]\|_{L^2(e)}^2\bigg)^{\frac12}.\label{eL2b}
\end{align}
\end{mylem}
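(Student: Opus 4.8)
The plan is to deduce the two bounds \eqref{eL2a}--\eqref{eL2b} for the $L^2$ projection $Q_h$ from the pointwise/elementwise estimate for the Oswald interpolation operator $\ios$ furnished by Lemma~\ref{Ios}, together with a standard inverse inequality on $V_h$. First I would observe that $\ios v_h \in V_h$, so by the best-approximation property of the $L^2$ projection, $\|v_h - Q_h v_h\|_{L^2(\T_h)} \le \|v_h - \ios v_h\|_{L^2(\T_h)}$. Summing the squared elementwise estimate in Lemma~\ref{Ios} over all $K \in \T_h$ gives
\begin{align*}
\|v_h - \ios v_h\|_{L^2(\T_h)}^2 \ls C_p^2 \sum_{K \in \T_h} h_K \Big(\sum_{e \in \E_h^I \cap \partial K}\|[v_h]\|_{L^2(e)}\Big)^2 \ls C_p^2 \sum_{K \in \T_h} h_K \sum_{e \in \E_h^I \cap \partial K}\|[v_h]\|_{L^2(e)}^2,
\end{align*}
where the last step uses the Cauchy--Schwarz inequality together with the fact that each element has a bounded number of faces (depending only on $d$). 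Since each interior face $e$ belongs to exactly two elements and $h_K \simeq h_e$ under the shape-regularity/quasi-uniformity assumption $h_K \simeq h$, the double sum collapses to $\ls C_p^2 \sum_{e \in \E_h^I} h_e \|[v_h]\|_{L^2(e)}^2$, which is \eqref{eL2a}.

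For the gradient estimate \eqref{eL2b} I would split $\nabla(v_h - Q_h v_h) = \nabla(v_h - \ios v_h) + \nabla(\ios v_h - Q_h v_h)$. The second term lies in $V_h$ (in fact is a difference of two functions in $V_h$), so the $hp$ inverse inequality on the finite element space gives $\|\nabla(\ios v_h - Q_h v_h)\|_{L^2(\T_h)} \ls \frac{p^2}{h}\|\ios v_h - Q_h v_h\|_{L^2(\T_h)}$; by the triangle inequality and \eqref{eL2a} this is bounded by $\frac{p^2}{h}\big(\|v_h - \ios v_h\|_{L^2(\T_h)} + \|v_h - Q_h v_h\|_{L^2(\T_h)}\big) \ls \frac{p^2 C_p}{h}\big(\sum_{e}h_e\|[v_h]\|_{L^2(e)}^2\big)^{1/2}$. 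For the first term, $\nabla(v_h - \ios v_h)$ is elementwise polynomial of degree $\le p$, so an elementwise inverse estimate $\|\nabla w_h\|_{L^2(K)} \ls \frac{p^2}{h_K}\|w_h\|_{L^2(K)}$ combined with Lemma~\ref{Ios} again yields $\ls \frac{p^2 C_p}{h}\big(\sum_{e}h_e\|[v_h]\|_{L^2(e)}^2\big)^{1/2}$. Adding the two contributions gives \eqref{eL2b}.

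The main technical point to be careful about is the use of the $hp$ inverse inequality with the correct power $p^2/h$ of the polynomial degree — one must invoke the sharp inverse estimate for mapped polynomial spaces on $\T_h$ (valid under Assumption~5.1 on the element maps $F_K$ from \cite{ms11}), rather than a crude $h$-only inverse bound, since otherwise the stated $p$-dependence in \eqref{eL2b} would be lost. A secondary bookkeeping point is the passage from the elementwise sum $\sum_K h_K\big(\sum_{e\subset\partial K}\|[v_h]\|_{L^2(e)}\big)^2$ to the facewise sum $\sum_e h_e\|[v_h]\|_{L^2(e)}^2$: this requires only that the number of faces per element and the number of elements per face are bounded by a constant depending only on $d$, which is guaranteed by the no-hanging-nodes assumption, and that $h_K \simeq h_e$, which follows from $h_K\simeq h$. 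None of these steps presents a real obstacle; the lemma is essentially a repackaging of Lemma~\ref{Ios} through inverse inequalities.
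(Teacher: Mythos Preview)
Your proof is correct and follows the same approach as the paper: best-approximation of $Q_h$ combined with Lemma~\ref{Ios} for \eqref{eL2a}, then the $hp$ inverse inequality for \eqref{eL2b}. The only difference is that your splitting $v_h-Q_hv_h=(v_h-\ios v_h)+(\ios v_h-Q_hv_h)$ for the gradient bound is unnecessary: since $v_h\in W_h$ and $Q_hv_h\in V_h\subset W_h$, the difference $v_h-Q_hv_h$ is already an elementwise (mapped) polynomial of degree $\le p$, so the inverse estimate $\|\nabla(v_h-Q_hv_h)\|_{L^2(K)}\lesssim \frac{p^2}{h_K}\|v_h-Q_hv_h\|_{L^2(K)}$ applies directly and \eqref{eL2b} follows from \eqref{eL2a} in one line, which is exactly what the paper does.
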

\begin{proof}
\eqref{eL2a} follows from $\|v_h-Q_h v_h\|_{L^2(\T_h)}\le \|v_h-\ios v_h\|_{L^2(\T_h)}$ and Lemma~\ref{Ios}. \eqref{eL2b} follows from the inverse inequality and \eqref{eL2a}.
\end{proof}
 
We cite the following lemma \cite[Lemma 4.1]{fw09},  which establishes two integral identities and play a crucial role in our analysis.
{\begin{mylem}
Let $\alpha(x):=x-x_{\Omega}$, $v \in \prod \limits_{K \in \mathcal T_h}H^2(K)$, $K\in \mathcal T_h$ and $e \in \mathcal E_h^I$. Then there hold
\begin{align}
&d \|v\|^2_{L^2(K)}+2\textnormal{Re}(v,\alpha \cdot\nabla v)_K=\int_{\partial K}\alpha\cdot n_K|v|^2, \label{Rellich 1}\\
&(d-2)\|\nabla v\|^2_{L^2(K)}+2\textnormal{Re}(\nabla v,\nabla(\alpha \cdot\nabla v))_K=\int_{\partial K}\alpha\cdot n_K|\nabla v|^2. \label{Rellich 2}
\end{align}
Here $x_{\Om}$ is a point such that the domain $\Omega$ is strictly star-shaped with respect to it (see \eqref{def1}).
\end{mylem}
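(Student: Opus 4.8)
The final statement to prove is the pair of Rellich-type integral identities \eqref{Rellich 1}--\eqref{Rellich 2} on a single element $K$ for the vector field $\alpha(x) = x - x_\Omega$. My plan is to establish both identities by direct computation using only integration by parts (the divergence theorem) on $K$, with no reference to the boundary conditions of the PDE, since these are purely algebraic-differential identities valid for any $v \in H^2(K)$. The key observation throughout is that $\na\cdot\alpha = d$ and $\na\alpha = I$ (the identity matrix), so differentiating $\alpha$ is trivial and all the ``real'' work is bookkeeping of products.

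\textbf{Identity \eqref{Rellich 1}.} First I would take $v \in H^1(K)$ (the $H^2$ hypothesis is not even needed here) and compute $\na\cdot(\alpha |v|^2)$. By the product rule, $\na\cdot(\alpha|v|^2) = (\na\cdot\alpha)|v|^2 + \alpha\cdot\na(|v|^2) = d|v|^2 + \alpha\cdot\na(v\bar v)$. Since $\na(v\bar v) = \bar v\,\na v + v\,\na\bar v = 2\re(\bar v\,\na v)$, we get $\na\cdot(\alpha|v|^2) = d|v|^2 + 2\re(\bar v\,(\alpha\cdot\na v))$. Integrating over $K$ and applying the divergence theorem to the left side gives $\int_{\pa K}\alpha\cdot n_K|v|^2 = d\|v\|_{L^2(K)}^2 + 2\re(v,\alpha\cdot\na v)_K$, which is exactly \eqref{Rellich 1}. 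This step is routine; the only care needed is to make sure complex conjugates land in the right place so the $2\re$ appears.

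\textbf{Identity \eqref{Rellich 2}.} This is the genuinely ``Rellich'' part and requires $v \in H^2(K)$ so that $\na v \in H^1(K)^d$. I would apply the same divergence-theorem trick to the vector field $\alpha|\na v|^2$ where now $|\na v|^2 = \na v\cdot\overline{\na v} = \sum_j \pa_j v\,\overline{\pa_j v}$. Then $\na\cdot(\alpha|\na v|^2) = d|\na v|^2 + \alpha\cdot\na(|\na v|^2)$, and $\alpha\cdot\na(|\na v|^2) = \alpha\cdot\na\big(\sum_j \pa_j v\,\overline{\pa_j v}\big) = 2\re\sum_j (\alpha\cdot\na\pa_j v)\,\overline{\pa_j v} = 2\re\sum_j \overline{\pa_j v}\,\pa_j(\alpha\cdot\na v - e_j\cdot\na v)$, using that $\pa_j(\alpha\cdot\na v) = \pa_j\big(\sum_\ell \alpha_\ell \pa_\ell v\big) = \pa_j v + \sum_\ell \alpha_\ell \pa_j\pa_\ell v = \pa_j v + \alpha\cdot\na\pa_j v$. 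Hence $\alpha\cdot\na\pa_j v = \pa_j(\alpha\cdot\na v) - \pa_j v$, and summing, $2\re\sum_j \overline{\pa_j v}\big(\pa_j(\alpha\cdot\na v) - \pa_j v\big) = 2\re(\na v, \na(\alpha\cdot\na v))_{\text{pointwise}} - 2|\na v|^2$. Putting this together: $\na\cdot(\alpha|\na v|^2) = (d-2)|\na v|^2 + 2\re\,\overline{\na v}\cdot\na(\alpha\cdot\na v)$. Integrating over $K$ and applying the divergence theorem yields \eqref{Rellich 2}.

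\textbf{Main obstacle.} There is no deep obstacle here — the result is an integration-by-parts identity and the proof is essentially a careful calculation. The one place demanding attention is the second identity: correctly handling the chain rule $\pa_j(\alpha\cdot\na v) = \pa_j v + \alpha\cdot\na\pa_j v$ (the extra $\pa_j v$ term is precisely what produces the $(d-2)$ rather than $d$ coefficient and the minus sign), and keeping the complex conjugation consistent so that $2\re$ appears rather than a bare product. Since this lemma is cited verbatim from \cite[Lemma 4.1]{fw09}, in the paper itself one would simply state it and refer to that reference rather than reproduce the computation; the plan above records how the verification goes.
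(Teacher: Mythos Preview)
Your proof is correct. The paper does not prove this lemma at all---it simply cites \cite[Lemma~4.1]{fw09}---and you have correctly supplied the standard divergence-theorem computation that underlies it, including the key observation that $\pa_j(\alpha\cdot\na v)=\pa_j v+\alpha\cdot\na\pa_j v$ is what converts the coefficient $d$ into $d-2$ in the second identity; you also correctly note at the end that the paper itself just refers to the source.
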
}
\begin{myrem}
The identity \eqref{Rellich 2} can be viewed as a local version of the Rellich identity for the Laplacian $\triangle$ (cf. \cite{cf06}).
\end{myrem}

\begin{mylem}
Let $u_h \in V_h $ solve \eqref{ecipfem}. Then
\begin{align}
& \norml{\na u_h}{\Om}^2-k^2\|u_h\|^2_{L^2(\Om)}\leq 2\norml{f}{\Om}\norml{u_h}{\Om}+\frac1k\norml{g}{\Ga}^2,\label{Re1} \\
& \sum_{e\in \mathcal E_h ^I}\gamma_e \frac{h_e}{p^2}\norm{\jm{\frac{\partial u_h}{\partial n_e}}}^2_{L^2(e)}+k \|u_h\|^2_{L^2(\Gamma)}\leq 2\norml{f}{\Om}\norml{u_h}{\Om}+\frac1k\norml{g}{\Ga}^2.\label{Im1}
\end{align}
\end{mylem}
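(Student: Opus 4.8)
The plan is to test \eqref{ecipfem} with $v_h=u_h$ and split the resulting identity into its real and imaginary parts, mimicking the a priori analysis of the PDE solution (cf.~\cite{cf06,hetmaniuk07,melenk95b}). Since $\ga_e$ is real, recalling \eqref{eah}--\eqref{eJ} the left-hand side of \eqref{ecipfem} with $v_h=u_h$ equals
\[
\norml{\na u_h}{\Om}^2+\i\sum_{e\in\E_h^I}\ga_e\frac{h_e}{p^2}\norml{\jm{\frac{\pa u_h}{\pa n_e}}}{e}^2-k^2\norml{u_h}{\Om}^2+\i k\norml{u_h}{\Ga}^2 ,
\]
so that taking real parts gives $\norml{\na u_h}{\Om}^2-k^2\norml{u_h}{\Om}^2=\re\big((f,u_h)+\pd{g,u_h}\big)$, while taking imaginary parts gives $\sum_{e\in\E_h^I}\ga_e\frac{h_e}{p^2}\norml{\jm{\frac{\pa u_h}{\pa n_e}}}{e}^2+k\norml{u_h}{\Ga}^2=\im\big((f,u_h)+\pd{g,u_h}\big)$.

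First I would establish \eqref{Im1}. Cauchy--Schwarz bounds the right-hand side of the imaginary-part identity by $\norml{f}{\Om}\norml{u_h}{\Om}+\norml{g}{\Ga}\norml{u_h}{\Ga}$, and Young's inequality gives $\norml{g}{\Ga}\norml{u_h}{\Ga}\le\frac{1}{2k}\norml{g}{\Ga}^2+\frac{k}{2}\norml{u_h}{\Ga}^2$. Absorbing $\frac{k}{2}\norml{u_h}{\Ga}^2$ into the left-hand side, then multiplying by $2$ and using that the jump sum is nonnegative, yields \eqref{Im1}; as a byproduct one also records the auxiliary bound $k\norml{u_h}{\Ga}^2\le 2\norml{f}{\Om}\norml{u_h}{\Om}+\frac1k\norml{g}{\Ga}^2$.

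For \eqref{Re1} I would start from the real-part identity, bound its right-hand side again by $\norml{f}{\Om}\norml{u_h}{\Om}+\norml{g}{\Ga}\norml{u_h}{\Ga}$, and apply Young's inequality to the boundary term. The one thing to watch is that the leftover $\frac{k}{2}\norml{u_h}{\Ga}^2$ has nothing to be absorbed into on the left-hand side (which carries no $L^2(\Ga)$-term), so instead I would estimate it by the auxiliary bound produced in the previous step; adding the pieces gives \eqref{Re1}. I do not expect any genuine obstacle here — this is a plain energy estimate — the only mild subtlety, and the reason \eqref{Re1} is proved after \eqref{Im1}, is that the Robin boundary condition couples the real and imaginary parts, so the boundary contribution arising from $\re\pd{g,u_h}$ must be controlled through the imaginary-part estimate rather than hidden directly.
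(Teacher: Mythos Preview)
Your proposal is correct and follows essentially the same route as the paper: test \eqref{ecipfem} with $v_h=u_h$, separate real and imaginary parts, prove \eqref{Im1} first via Cauchy--Schwarz and Young (absorbing $\tfrac{k}{2}\norml{u_h}{\Ga}^2$), and then feed the resulting bound on $k\norml{u_h}{\Ga}^2$ back into the real-part inequality to obtain \eqref{Re1}. The only cosmetic difference is that the paper bounds both real and imaginary parts by $\abs{(f,u_h)+\pd{g,u_h}}$ before splitting, whereas you work directly with $\re$ and $\im$; the arithmetic is identical.
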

\begin{proof}
Taking $v_h=u_h$ in \eqref{ecipfem} yields
\begin{align} \label{ahuh}
a_h(u_h,u_h)-k^2 \|u_h\|^2_{L^2(\Om)}+\i k\|u_h\|^2_{L^2(\Gamma)}=(f,u_h)+\langle g,u_h\rangle.
\end{align}
Therefore, taking real part and imaginary part of the above equation, we get
\begin{align}
& \norml{\na u_h}{\Om}^2-k^2\|u_h\|^2_{L^2(\Om)}\leq |(f,u_h)+\langle g,u_h\rangle|,\label{Re1a} \\
& \sum_{e\in \mathcal E_h ^I}\gamma_e \frac{h_e}{p^2}\norm{\jm{\frac{\partial u_h}{\partial n_e}}}^2_{L^2(e)}+k \|u_h\|^2_{L^2(\Gamma)}\leq |(f,u_h)+\langle g,u_h\rangle|.\label{Im1a}
\end{align}
From \eqref{Im1a},
\begin{align*}
\sum_{e\in \mathcal E_h ^I}\gamma_e \frac{h_e}{p^2}\norm{\jm{\frac{\partial u_h}{\partial n_e}}}^2_{L^2(e)}+k \|u_h\|^2_{L^2(\Gamma)}&\leq |(f,u_h)+\langle g,u_h\rangle|\\
&\le \norml{f}{\Om}\norml{u_h}{\Om}+\frac{1}{2k}\norml{g}{\Ga}^2+\frac{k}{2} \|u_h\|^2_{L^2(\Gamma)}
\end{align*}
which implies \eqref{Im1}. From \eqref{Re1a} and \eqref{Im1},
\begin{align*}
 \norml{\na u_h}{\Om}^2-k^2\|u_h\|^2_{L^2(\Om)}&\leq \norml{f}{\Om}\norml{u_h}{\Om}+\frac{1}{2k}\norml{g}{\Ga}^2+\frac{k}{2} \|u_h\|^2_{L^2(\Gamma)}\\
&\le 2\norml{f}{\Om}\norml{u_h}{\Om}+\frac1k\norml{g}{\Ga}^2.
\end{align*}
That is, \eqref{Re1} holds. This completes the proof of the lemma.
\end{proof}

 From \eqref{Re1} and \eqref{Im1} we can bound $\norml{\na u_h}{\Om}$ and the jumps of $\frac{\partial u_h}{\partial n_e}$ across $e \in \mathcal E_h^I$.
In order to get the desired a priori estimates, we need to derive a reverse inequality whose coefficients can be controlled. Such a reverse inequality, which is often difficult to get under practical mesh constraints, and stability estimates for $u_h$ will be derived next.
{\begin{mythm}\label{stablity}
Let $u_h \in V_h$ solve \eqref{ecipfem} and suppose $ \gamma_e\simeq \gamma$, $h_K,h_e\simeq h$. Then
\begin{equation}\label{||u_h||_{1,h}}
k\|u_h\|_{L^2(\Om)}+\|u_h\|_{1,h}\lesssim \csta M(f,g),
\end{equation}
where
\begin{align}
 M(f,g)&:=\|f\|_{L^2(\Om)}+\|g\|_{L^2(\Gamma)}, \label{M(f,g)}\\
 \csta&:=1+\frac{1}{k}\Big(\frac{\ga p^4}{h^2}+\frac{p^6 C_p^2}{\ga h^2}\Big).\label{c_{sta}}
\end{align}
\end{mythm}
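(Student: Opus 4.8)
The plan is to establish the stability estimate \eqref{||u_h||_{1,h}} by adapting the Rellich-identity argument of \cite{fw09,fw11} to the CIP-FEM setting, the new ingredient being a careful treatment of the non-conforming test function via the $L^2$ projection $Q_h$. First I would set $\alpha(x):=x-x_\Om$ and introduce the piecewise-defined function $v:=\alpha\cdot\nabla u_h\in W_h$ together with its projection $v_h:=Q_h v\in V_h$, which \emph{is} admissible in \eqref{ecipfem}. Summing the local Rellich identity \eqref{Rellich 2} over $K\in\T_h$, using $c_\Om\le\alpha\cdot n$ on $\Ga$ from \eqref{def1}, and splitting the interior-face contributions, one obtains a lower bound of the form $\int_\Ga\alpha\cdot n|\nabla u_h|^2\gtrsim (d-2)\norml{\na u_h}{\Om}^2 + 2\re(\na u_h,\na v)_{\T_h}$ plus interior-jump terms that are controlled by $\sum_e h_e\norml{\jm{\pa u_h/\pa n_e}}{e}^2$, hence by \eqref{Im1}. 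In parallel, testing \eqref{ecipfem} with $v_h$ and taking real parts produces the identity linking $(\na u_h,\na v_h)$, $k^2(u_h,v_h)$, the penalty term $J(u_h,v_h)$, and the data; the difference $(\na u_h,\na(v-v_h))$ and $(u_h,v-v_h)$ must then be absorbed using Lemma~\ref{lem-L2}.

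The core computation is to combine these two identities so that the boundary integral $\int_\Ga\alpha\cdot n|\nabla u_h|^2$ — which after using the Robin condition on $\Ga$ is essentially $k^2\norml{u_h}{\Ga}^2$ up to data — gets matched against a good multiple of $k^2\|u_h\|_{L^2(\Om)}^2$ coming from $2\re k^2(u_h,\alpha\cdot\na u_h)_{\T_h}=d\,k^2\norml{u_h}{\Om}^2 k^{-1}\cdots$ via \eqref{Rellich 1}. Concretely I would follow the standard bookkeeping: multiply \eqref{Re1}-type information and the Rellich sums by suitable constants, add them, and arrange that the $\norml{\na u_h}{\Om}^2$ and $k^2\norml{u_h}{\Om}^2$ terms appear with the \emph{same} favorable sign after cancellation, leaving $k\norml{u_h}{\Ga}^2$, the penalty seminorm, and data on the right. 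The error terms $v-v_h$ enter through $(\na u_h,\na(v-v_h))_{\T_h}$ and $k^2(u_h,v-v_h)_{\T_h}$ and, via \eqref{eL2a}-\eqref{eL2b} and Cauchy-Schwarz, are bounded by
\begin{align*}
\Big(\norml{\na u_h}{\Om}+\frac{p^2}{h}\big(1+\tfrac{kh}{p}\big)k\norml{u_h}{\Om}\Big)\cdot C_p\Big(\sum_{e\in\E_h^I}h_e\norml{\jm{\tfrac{\pa u_h}{\pa n_e}}}{e}^2\Big)^{1/2},
\end{align*}
and the last factor is $\lesssim (\ga^{-1/2}p)\big(\sum_e\ga_e\tfrac{h_e}{p^2}\norml{\jm{\pa u_h/\pa n_e}}{e}^2\big)^{1/2}=\ga^{-1/2}p\,\|u_h\|_{1,h}$, which by \eqref{Im1} is in turn $\lesssim \ga^{-1/2}p\,(M(f,g)^2/k)^{1/2}$-type quantity; this is exactly where the factor $\frac{1}{k}(\frac{\ga p^4}{h^2}+\frac{p^6C_p^2}{\ga h^2})$ in \eqref{c_{sta}} is born. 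An application of Young's inequality then absorbs $\norml{\na u_h}{\Om}$ and $\frac{p^2}{h}k\norml{u_h}{\Om}$ back into the left-hand side at the cost of the stated $\csta$.

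The hard part, and the one I would be most careful about, is the interior-face accounting: when \eqref{Rellich 2} is summed, each interior face $e=\pa K\cap\pa K'$ contributes $\int_e\alpha\cdot(n_K|\nabla u_h|_K|^2+n_{K'}|\nabla u_h|_{K'}|^2)$, and since $\nabla u_h$ is only elementwise $H^1$ these do not cancel; they must be rewritten using averages and jumps of $\nabla u_h$ and $\alpha\cdot\nabla(\alpha\cdot\nabla u_h)$, with the jump pieces bounded by $h_e\norml{\jm{\pa u_h/\pa n_e}}{e}^2$ (up to inverse inequalities producing the $p^2/h$ and $p^4$ powers) so as to be controlled by the penalty seminorm and hence by \eqref{Im1}. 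Getting the $p$-powers sharp here — they feed directly into $C_p^2$ and the $p^4,p^6$ in \eqref{c_{sta}} — requires the $hp$ inverse and trace inequalities on the reference element, and is where the bulk of the technical labor lies. Once all face terms are subsumed into $\|u_h\|_{1,h}^2$ and $k^2\norml{u_h}{\Om}^2$ with controllable constants, the estimate \eqref{||u_h||_{1,h}} follows; well-posedness of \eqref{ecipfem} is then immediate since it is a square linear system whose only solution for $f=g=0$ is $u_h=0$.
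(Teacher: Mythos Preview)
Your plan is essentially the paper's proof: test with $Q_h(\alpha\cdot\nabla u_h)$, combine the two Rellich identities \eqref{Rellich 1}--\eqref{Rellich 2} with \eqref{Re1}--\eqref{Im1}, control the projection defect via Lemma~\ref{lem-L2}, and absorb interior-face residues into the penalty seminorm using $hp$ trace/inverse inequalities. The bookkeeping you outline for the interior faces and for the emergence of $\frac{\ga p^4}{h^2}+\frac{p^6C_p^2}{\ga h^2}$ matches the paper's Step~2.

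There is one point where your description is off and would not go through as written. You say that $\int_\Ga\alpha\cdot n\,|\nabla u_h|^2$ becomes ``essentially $k^2\norml{u_h}{\Ga}^2$ up to data'' via the Robin condition. It does not: $u_h$ does not satisfy $\partial_n u_h+\i k u_h=g$ pointwise on~$\Ga$ (only weakly), and in any case $|\nabla u_h|^2$ is the full gradient, not the normal derivative. In the actual argument this boundary term is kept \emph{as is} with its favorable sign: since $\alpha\cdot n\ge c_\Om>0$, the contribution $-\langle\alpha\cdot n,|\nabla u_h|^2\rangle_\Ga\le -c_\Om\sum_{e\in\E_h^B}\norml{\nabla u_h}{e}^2$ sits on the right-hand side and is used to \emph{absorb} the boundary-gradient pieces that arise from $2\re(f,v)+\langle g,v\rangle$ and from $2k\,\im\langle u_h,v\rangle_\Ga$. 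The quantity that is ultimately controlled by \eqref{Im1} is $k^2\norml{u_h}{\Ga}^2$ (coming from $k^2\langle\alpha\cdot n,|u_h|^2\rangle_\Ga$ via \eqref{Rellich 1}), not the boundary gradient. Once you reroute that step, the rest of your plan coincides with the paper's proof.
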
}
\begin{proof}
We divide the proof into three steps.

\ \ \ \textit{Step 1: Derivation of a representation identity for $\|u_h\|_{L^2(\Omega)}$}. Define $v_h$ by $v_h|_K=\alpha\cdot\nabla u_h|_K$ for every $K\in \mathcal T_h$, denote by $w_h=Q_h  v_h$, hence $w_h\in V_h$. Using this $w_h$ as a test function in \eqref{ecipfem} and taking the real part of resulted equation we get
\begin{equation}\label{eqawh}
-k^2 \re (u_h,w_h)=\re ((f,w_h)+\langle g,w_h \rangle-a_h(u_h,w_h)-\i k\langle u_h,w_h\rangle).
\end{equation}
It follows from \eqref{Rellich 1}, \eqref{ahuh}, and \eqref{eqawh} that
\begin{align}\label{th1}
&2k^2\|u_h\|^2_{L^2(\Om)}= k^2\sum_{K \in \mathcal T_h}\int_{\partial K}\alpha\cdot n_K|u_h|^2-(d-2)k^2\|u_h\|^2_{L^2(\Om)} \\
\nn&\qquad \ \ \ \ \ \ \ \ \ \ \ \ \ \  -2k^2\re (u_h,w_h) -2k^2\re (u_h,v_h-w_h) \\
\nn &=k^2\sum_{K \in \mathcal T_h}\int_{\partial K}\alpha\cdot n_K|u_h|^2+(d-2)\re ((f,u_h)+\langle g,u_h\rangle-a_h(u_h,u_h))\\
\nn & \quad +2 \re ((f,w_h)+\langle g,w_h \rangle-a_h(u_h,w_h)-\i k \langle u_h,w_h \rangle)\\
\nn &= k^2\sum_{K \in \mathcal T_h}\int_{\partial K}\alpha\cdot n_K|u_h|^2+(d-2)\re ((f,u_h)+\langle g,u_h\rangle)\\
\nn &\quad +2 \re ((f,w_h)+\langle g,w_h \rangle)-\sum_{K\in \mathcal T_h}((d-2)\|\nabla u_h\|^2_{L^2(K)}+2\re (\nabla u_h,\nabla w_h)_K)\\
\nn &\quad -2\re J(u_h,w_h)+2k\im \langle u_h,w_h \rangle.
\end{align}
Since $u_h$ is continuous, we have
\begin{align}\label{th22}
\sum_{K \in \mathcal T_h}\int_{\partial K}\alpha\cdot n_K|u_h|^2&=2\sum_{e\in \mathcal E_h^I}\textnormal{Re}\langle \alpha \cdot n_e\{u_h\},[u_h]\rangle_e
+\langle \alpha \cdot n_\Omega,|u_h|^2 \rangle\\
 \nn &=\langle \alpha \cdot n_\Omega,|u_h|^2 \rangle.
\end{align}
Using the identity $|a|^2-|b|^2=$ \rm{Re}$(a+b)(\bar{a}-\bar{b})$ followed by the Rellich identity \eqref{Rellich 2} we get
\begin{align}\label{th2}
&\sum_{K\in \mathcal T_h}((d-2)\|\nabla u_h\|^2_{L^2(K)}+2\re (\nabla u_h,\nabla w_h)_K)\\
\nn&\ \  =\sum_{K\in \mathcal T_h}((d-2)\|\nabla u_h\|^2_{L^2(K)}+2\re (\nabla u_h,\nabla v_h)_K)\\
\nn & \ \ \ \ \ \ \ \ \ \ \ \ \ \ \ \ \ \ \ \ \ \ \ \ +2\sum_{K \in \mathcal T_h}\re (\nabla u_h,\nabla(w_h-v_h))_K\\
\nn &\ \ =\sum_{K\in \mathcal T_h}\int_{\partial K}\alpha\cdot n_K |\nabla u_h|^2+2\sum_{K \in \mathcal T_h}\re (\nabla u_h,\nabla(w_h-v_h))_K\\
\nn &\ \ =2\sum_{e\in \mathcal E_h^I}\re \langle \alpha\cdot n_e\{\nabla u_h\},[\nabla u_h]\rangle_e+\langle \alpha\cdot n_\Om,|\nabla u_h|^2\rangle\\
\nn &\ \ \ \ \ \ \ \ \ \ \ \ \ \ \ \ \ \ \ \ \ \ \ \ \ \ \ \  +2\sum_{K \in \mathcal T_h}\re (\nabla u_h,\nabla(w_h-v_h))_K.
\end{align}
Plugging \eqref{th22} and \eqref{th2} into \eqref{th1} gives
\begin{align}\label{th3}
2k^2\|u_h\|^2_{L^2(\Om)}&=(d-2)\re ((f,u_h)+\langle g,u_h\rangle)+2 \re ((f,v_h)+\langle g,v_h \rangle)\\
\nn &\quad +k^2\langle \alpha\cdot n_\Om,|u_h|^2\rangle+2k\im \langle u_h,v_h \rangle-\langle \alpha\cdot n_\Om,|\nabla u_h|^2\rangle\\
\nn &\quad -2\sum_{e\in \mathcal E_h^I}\re \langle \alpha\cdot n_e\{\nabla u_h\},[\nabla u_h]\rangle_e-2\re J(u_h,v_h)\\
\nn &\quad -2\re J(u_h,w_h-v_h)-2\sum_{K \in \mathcal T_h}\re (\nabla u_h,\nabla(w_h-v_h))_K\\
\nn&\quad+2k\im \langle u_h,w_h-v_h \rangle+2 \re ((f,w_h-v_h)+\langle g,w_h-v_h \rangle).
\end{align}
\ \ \ \textit{Step 2: Derivation of a reverse inequality}. We bound each term on the right-hand side of \eqref{th3}. 
We have 
\begin{align}\label{esta1}
2\re ((f,v_h)+\langle g,v_h \rangle) \le& C\|f\|^2_{L^2(\Om)}+\frac{1}{8}\norml{\na u_h}{\Om}^2\\&\nn+C \|g\|^2_{L^2(\Ga)}+\frac{c_\Om}{4}\sum_{e \in \mathcal E^B_h}\|\nabla u_h\|^2_{L^2(e)}.
\end{align}
It is clear that
\begin{align}
k^2\langle \alpha\cdot n_\Om,|u_h|^2\rangle\leq C k^2 \|u_h\|^2_{L^2(\Ga)}.
\end{align}
It follows from the star-shaped assumption on $\Omega$ that
\begin{align}
& 2k\im \langle u_h,v_h \rangle-\langle \alpha\cdot n_\Om,|\nabla u_h|^2\rangle\\
\nn &\leq C k\sum_{e\in \mathcal E_h^B}\|u_h\|_{L^2(e)}\|\nabla u_h\|_{L^2(e)}-c_\Om\sum_{e \in \mathcal E_h^B}\|\nabla u_h\|^2_{L^2(e)}\\
\nn &\leq Ck^2\|u_h\|^2_{L^2(\Gamma)}-\frac{c_\Om}{2}\sum_{e\in\mathcal E_h^B}\|\nabla u_h\|^2_{L^2(e)}.
\end{align}
For an edge/face $e \in \mathcal E_h^I$, let $K_e$ and $K'_e$ denote the two elements in $\mathcal T_h$ that share $e$. We have
\begin{align}
-2\sum_{e\in \mathcal E_h^I}\re \langle \alpha\cdot n_e\{\nabla u_h\},[\nabla u_h]\rangle_e &\leq C\sum_{e\in \mathcal E_h^I}\Big(\frac{p^2}{h}\Big)^{\frac12}\|\nabla u_h\|_{L^2(K_e\bigcup K'_e)}\norm{\jm{\frac{\partial u_h}{\partial n_e}}}_{L^2(e)}\\
\nn & \leq \frac{1}{8}\norml{\na u_h}{\Om}^2+C\frac{p^4}{\gamma h^2}\sum_{e \in \mathcal E_h^I}\ga_e \frac{h_e}{p^2}\norm{\jm{\frac{\partial u_h}{\partial n_e}}}^2_{L^2(e)}.
\end{align}
From \eqref{eJ}, the trace and inverse inequalities (cf. Lemma~\ref{trace}), we have
\begin{align} \label{Juhvh}
-2\re J(u_h,v_h)& =-2\re\sum_{e \in \mathcal E_h^I}\i\ga_e\frac{h}{p^2}\Big\langle\jm{\frac{\partial u_h}{\partial n_e}},\jm{\frac{\partial v_h}{\partial n_e}}\Big\rangle_e\\
& \ls \sum_{e \in \mathcal E_h^I}\ga_e \frac{h_e}{p^2}\norml{\jm{\frac{\partial u_h}{\partial n_e}}}{e}\norm{\jm{\frac{\partial v_h}{\partial n_e}}}_{L^2(e)}\nn\\
\nn &\lesssim \sum_{e \in \mathcal E_h^I}\ga_e \frac{h_e}{p^2}\norm{\jm{\frac{\partial u_h}{\partial n_e}}}_{L^2(e)}\Big(\frac{p^2}{h}\Big)^{\frac12}\sum_{K=K_e,K'_e}|v_h|_{H^1(K)}\\
\nn & \lesssim \sum_{e \in \mathcal E_h^I}\ga_e \frac{h_e}{p^2}\norm{\jm{\frac{\partial u_h}{\partial n_e}}}_{L^2(e)}\Big(\frac{p^2}{h}\Big)^{\frac12}\sum_{K=K_e,K'_e}\frac{p^2}{h}\|\nabla u_h\|_{L^2(K)}\\
\nn & \leq \frac{1}{8}\norml{\na u_h}{\Om}^2+\frac{\ga p^4}{h^2}\sum_{e \in \mathcal E_h^I}\ga_e\frac{h_e}{p^2}\norm{\jm{\frac{\partial u_h}{\partial n_e}}}^2_{L^2(e)}.
\end{align}
Next we estimate the terms containing $v_h-w_h=v_h-Q_h v_h$. From Lemma \ref{lem-L2} and the definition of $v_h$,\begin{align}
&\norml{v_h-w_h}{\T_h}\ls \frac{p C_p}{\ga^{\frac12}}  \bigg(\sum_{e\in \mathcal E_h^I}\ga_e \frac{h_e}{p^2} \norm{\jm{\frac{\partial u_h}{\partial n_e}}}_{L^2(e)}^2\bigg)^{\frac12},\label{Ios1}\\
&\abs{v_h-w_h}_{H^1(\T_h)}\ls \frac{p^3 C_p}{\ga^{\frac12} h}  \bigg(\sum_{e\in \mathcal E_h^I}\ga_e \frac{h_e}{p^2} \norm{\jm{\frac{\partial u_h}{\partial n_e}}}_{L^2(e)}^2\bigg)^{\frac12}.\label{Ios2}
\end{align}
From \eqref{Ios2} and the trace inequality in Lemma~\ref{trace},
\begin{align}\label{esta5}
-2\re J(u_h,w_h-& v_h)\leq\sum_{e\in \mathcal E_h^I}\ga_e \frac{h_e}{p^2}\norm{\jm{\frac{\partial u_h}{\partial n_e}}}_{L^2(e)}\|[\nabla(w_h-v_h) ]\|_{L^2(e)}\\
\nn &\leq C\sum_{e\in \mathcal E_h^I}\ga_e \frac{h_e}{p^2}\norm{\jm{\frac{\partial u_h}{\partial n_e}}}_{L^2(e)}\sum_{K=K_e,K'_e}\Big(\frac{p^2}{h}\Big)^\frac{1}{2}\|\nabla(w_h-v_h) \|_{L^2(K)}\nn\\
\nn &\leq C\ga^{\frac12}\bigg(\sum_{e\in \mathcal E_h^I}\ga_e \frac{h_e}{p^2}\norm{\jm{\frac{\partial u_h}{\partial n_e}}}_{L^2(e)}^2\bigg)^{\frac12}\abs{v_h-w_h}_{H^1(\T_h)}\nn\\
\nn & \leq C\frac{p^3C_p}{h}\sum_{e \in \mathcal E_h^I}\ga_e\frac{h_e}{p^2}\norm{\jm{\frac{\partial u_h}{\partial n_e}}}^2_{L^2(e)}.\nn
\end{align}
And we have,
\begin{align}
-2\sum_{K \in \mathcal T_h}\re (\nabla u_h,\nabla(w_h-&v_h))_K  \leq 2\sum_{K\in \mathcal T_h}\|\nabla u_h\|_{L^2(K)}\|\nabla(w_h-v_h)\|_{L^2(K)}\\
\nn &\leq \frac{1}{8}\norml{\na u_h}{\Om}^2+C \abs{v_h-w_h}_{H^1(\T_h)}^2\\
\nn &\leq \frac{1}{8}\norml{\na u_h}{\Om}^2+C \frac{p^6 C_p^2}{\ga h^2} \sum_{e\in \mathcal E_h^I}\ga_e \frac{h_e}{p^2}\norm{\jm{\frac{\partial u_h}{\partial n_e}}}_{L^2(e)}^2.
\end{align}
From \eqref{Ios1} and Lemma~\ref{trace},
\begin{align}
2k\im \langle u_h,w_h-v_h \rangle &\le C k\sum_{e\in \E_h^B}\norml{u_h}{e}\norml{w_h-v_h}{e} \\
&\le C k\norml{u_h}{\Ga}\Big(\frac{p^2}{h}\Big)^{\frac12}\norml{w_h-v_h}{\T_h}\nn\\
&\le C k\norml{u_h}{\Ga}\Big(\frac{p^2}{h}\Big)^{\frac12}\frac{p C_p}{\ga^{\frac12}}  \bigg(\sum_{e\in \mathcal E_h^I}\ga_e \frac{h_e}{p^2}\norm{\jm{\frac{\partial u_h}{\partial n_e}}}_{L^2(e)}^2\bigg)^{\frac12}\nn\\
 &\le C p^2C_p\Big(\frac{k }{\ga h}\Big)^{\frac12}\bigg(k\norml{u_h}{\Ga}^2+\sum_{e \in \mathcal E_h^I}\ga_e \frac{h_e}{p^2}\norm{\jm{\frac{\partial u_h}{\partial n_e}}}^2_{L^2(e)}\bigg).\nn
 \end{align}
From \eqref{Ios1}, \eqref{Ios2}, and Lemma~\ref{trace},
\begin{align}\label{esta10}
2& \re ((f,w_h-v_h)+\langle g,w_h-v_h \rangle)\\
&\lesssim \norml{f}{\Om}\norml{w_h-v_h}{\T_h}+\norml{g}{\Ga}\Big(\frac{p^2}{h}\Big)^{\frac12}\norml{w_h-v_h}{\T_h}\nn\\
\nn &\ls \big(\norml{f}{\Om}+\norml{g}{\Ga}\big)\frac{p^2 C_p}{(\ga h)^{\frac12}}  \bigg(\sum_{e\in \mathcal E_h^I}\ga_e \frac{h_e}{p^2}\norm{\jm{\frac{\partial u_h}{\partial n_e}}}_{L^2(e)}^2\bigg)^{\frac12}\\
\nn &\ls \norml{f}{\Om}^2+\norml{g}{\Ga}^2+ \frac{p^4 C_p^2}{\ga h}\sum_{e\in \mathcal E_h^I}\ga_e \frac{h_e}{p^2}\norm{\jm{\frac{\partial u_h}{\partial n_e}}}_{L^2(e)}^2.
\end{align}
Plugging \eqref{esta1}--\eqref{Juhvh} and \eqref{esta5}--\eqref{esta10} into \eqref{th3} we obtain
\begin{align*}
2k^2\|u_h\|^2_{L^2(\Om)}\leq &\abs{(f,u_h)+\langle g,u_h\rangle}+C(\|f\|^2_{L^2(\Om)}+\|g\|^2_{L^2(\Ga)})\\
\nn &+\frac{1}{2}\norml{\na u_h}{\Om}^2-\frac{c_\Om}{4}\sum_{e \in \mathcal E_h^B}\|\nabla u_h\|^2_{L^2(e)}+C \Big(k+p^2C_p\Big(\frac{k }{\ga h}\Big)^{\frac12}\Big) k\|u_h\|^2_{L^2(\Gamma)}\\
\nn & +C\Big(\frac{p^4}{\gamma h^2}+\frac{\ga p^4}{h^2}+\frac{p^3C_p}{h}+\frac{p^6 C_p^2}{\ga h^2}+p^2C_p\Big(\frac{k }{\ga h}\Big)^{\frac12} \Big)\sum_{e \in \mathcal E_h^I}\ga_e \frac{h_e}{p^2}\norm{\jm{\frac{\partial u_h}{\partial n_e}}}^2_{L^2(e)}.
\end{align*}
From \eqref{Im1} and the facts
\begin{align*}
C_p\ge p^{-1},\quad \frac{p^3C_p}{h}\ls \ga+ \frac{p^6 C_p^2}{\ga h^2},\quad p^2C_p\Big(\frac{k }{\ga h}\Big)^{\frac12}\ls \frac{p^6 C_p^2}{\ga h^2}+\frac{k h}{p^2},
\end{align*}
 we have
\begin{align*}
& 2k^2\|u_h\|^2_{L^2(\Om)}+\frac{c_\Om}{4}\sum_{e \in \mathcal E_h^B}\|\nabla u_h\|^2_{L^(e)}\\
& \quad \leq -\frac{1}{2}\norml{\na u_h}{\Om}^2+\norml{\na u_h}{\Om}^2+C(\|f\|^2_{L^2(\Om)}+\|g\|^2_{L^2(\Ga)})\\
& \quad\quad\  +C\Big(k+1+\frac{\ga p^4}{h^2}+\frac{p^6 C_p^2}{\ga h^2}+\frac{k h}{p^2}\Big)\Big(\norml{f}{\Om}\norml{u_h}{\Om}+\frac1k\norml{g}{\Ga}^2\Big).
\end{align*}
\textit{Step 3: Finishing up}. It follows from \eqref{Re1}, \eqref{M(f,g)}, \eqref{c_{sta}}, and the above inequality that
\begin{align*}
& 2k^2\|u_h\|^2_{L^2(\Om)}+\frac{c_\Om}{4}\sum_{e \in \mathcal E_h^B}\|\nabla u_h\|^2_{L^2(e)}\\
&\leq  k^2\|u_h\|^2_{L^2(\Om)}-\frac{1}{2}\norml{\na u_h}{\Om}^2+C (\|f\|^2_{L^2(\Om)}+\|g\|^2_{L^2(\Ga)})\\
&\quad +C k \csta \big(\norml{f}{\Om}\norml{u_h}{\Om}+\frac1k\norml{g}{\Ga}^2\big)\\
&\leq \frac{3 k^2}{2}\|u_h\|^2_{L^2(\Om)}-\frac{1}{2}\norml{\na u_h}{\Om}^2+C \csta^2 M(f,g)^2,
\end{align*}
which together with \eqref{Im1} implies \eqref{||u_h||_{1,h}}. The proof is completed.
\end{proof}

Since scheme \eqref{ecipfem} is a linear complex-valued system, an immediate consequence of the stability estimates is the following well-posedness theorem for \eqref{ecipfem}.
{\begin{mythm}\label{thh2}
The CIP-FEM \eqref{ecipfem} has a unique solution for $k>0$, $h>0$, $p\geq1$ and $\gamma >0$.
\end{mythm}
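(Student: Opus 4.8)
The plan is to exploit the fact that \eqref{ecipfem} is a \emph{finite-dimensional} linear problem and reduce existence and uniqueness to a single statement about the homogeneous problem. Since the trial space and the test space coincide (both equal $V_h$), choosing a basis $\{\phi_1,\dots,\phi_N\}$ of $V_h$ with $N=\dim V_h$ turns \eqref{ecipfem} into a square complex linear system $A\mathbf c=\mathbf b$, where $A_{ij}=a_h(\phi_j,\phi_i)-k^2(\phi_j,\phi_i)+\i k\langle\phi_j,\phi_i\rangle$ and $\mathbf b_i=(f,\phi_i)+\langle g,\phi_i\rangle$. For such a system, existence for every right-hand side is equivalent to injectivity of $A$, i.e. to uniqueness. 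Hence it suffices to show that the only $u_h\in V_h$ solving \eqref{ecipfem} with $f=0$ and $g=0$ is $u_h=0$.

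For this I would simply invoke the a priori bound of Theorem~\ref{stablity}. Applying the estimate \eqref{||u_h||_{1,h}} to a homogeneous solution $u_h$ (that is, with data $f=0$, $g=0$), we have $M(f,g)=\|f\|_{L^2(\Om)}+\|g\|_{L^2(\Ga)}=0$ by \eqref{M(f,g)}, and therefore
\[
k\|u_h\|_{L^2(\Om)}+\|u_h\|_{1,h}\le \csta\, M(f,g)=0 .
\]
Since $k>0$, this forces $\|u_h\|_{L^2(\Om)}=0$, hence $u_h=0$ a.e.\ in $\Om$ and thus $u_h\equiv 0$ in $V_h$. This proves injectivity of $A$, and consequently $A$ is invertible, so \eqref{ecipfem} has a unique solution for every $f,g$. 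One should only remark that the constant $\csta$ in \eqref{c_{sta}} is finite and well defined for \emph{all} $k>0$, $h>0$, $p\ge 1$, $\ga>0$ (no mesh–wave-number constraint such as \eqref{econd1} is needed here), which is precisely the ``absolute stability'' feature of the CIP-FEM that makes the conclusion hold without restrictions on the parameters.

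There is essentially no genuine obstacle in this argument: the only real content has already been carried out in Theorem~\ref{stablity}, whose proof supplies the reverse (Rellich-type) inequality needed to control $k\|u_h\|_{L^2(\Om)}$ for $\ga>0$. The remaining step is the standard finite-dimensional "uniqueness $\Rightarrow$ existence" equivalence. If one wished to avoid even a basis choice, the same conclusion follows from the fact that a linear endomorphism of a finite-dimensional space is surjective iff it is injective; the stability estimate gives injectivity, and surjectivity is exactly the solvability of \eqref{ecipfem}.
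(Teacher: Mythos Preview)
Your proposal is correct and matches the paper's approach: the paper simply states that the well-posedness is an immediate consequence of the stability estimate in Theorem~\ref{stablity} for the linear finite-dimensional system, and you have spelled out exactly that argument (stability $\Rightarrow$ uniqueness $\Rightarrow$ existence via the square linear system).
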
}
\begin{myrem}\label{r5.3}
{\rm (i)} For the general case when the meshes may be nonuniform, Theorem~\ref{stablity} and Theorem~\ref{thh2} still
hold with $h$ replaced by $\underline{h}=\min_{K\in \mathcal{T}_h}h_K$. The proof is similar and is omitted.

{\rm (ii)} When $p=1$, a better stability estimate is derived in \cite{w} by another approach taking advantage of $\De u_h=0$ on each element.

{\rm (iii)} The stability may be enhanced by an over-penalized technique, just like what is done for the $hp$-IPDG method (cf. \cite{fw11}). More precisely, if we replace $a_h(u_h,v_h)$ in the CIP-FEM \eqref{ecipfem} by the following sesquilinear 
\begin{align*}
a_h^q(u_h,v_h)=(\na u_h, \na v_h)+\sum_{j=1}^q\sum_{e\in\E_h^{I}}\i\ga_{j,e} \left(\frac{h_e}{p^2}\right)^{2j-1}\pdjj{\frac{\p^j u_h}{\p n_e^j}}{\frac{\p^j v_h}{\p n_e^j}},
\end{align*}
with  $\ga_{j,e}\simeq \ga_j>0$, $j=1,2,\cdots,q$, $2\le q\le p$, then, after some tedious but similar derivations, we may show that \eqref{||u_h||_{1,h}} holds with $\csta$ replaced by the following stability constant:
\begin{align*}
\csta^q=1&+\frac{1}{k}\Big(\frac{p^4C_p^2}{\ga_1
h}+\frac{p^4C_p^2}{(\ga_1\ga_2)^{\frac12} h} +
\frac{p^3C_p}{h}\Big(\sum\limits_{j=1}^q\frac{\ga_j}{\ga_1}\Big)^{\frac12}+\frac{p^2}{h}\max\limits_{1\leq j \leq
q-1}\Big(\frac{\ga_j}{\ga_{j+1}}\Big)^{\frac12}+\ga_q \frac{p^4}{h^2}\Big).
\end{align*}
We omit the details here. Clearly, if we choose $
 \ga_j\simeq (C_p^2h)^{\frac{2(j-1)}{2q-1}}, 1\le j\le q,
$ 
then
$\csta^q\ls \dfrac{p^4C_p^{\frac{4q-4}{2q-1}}}{k h^{\frac{2q}{2q-1}}}$, where $C_p$ is defined in Lemma~\ref{Ios}. Recall that the best stability constant obtained so far for the  $hp$-IPDG method is $O\Big(\dfrac{p^{\frac83}}{kh^{\frac43}}\Big)$  (cf. \cite[Theroem~3.3]{fw11}). 
\end{myrem}

\section{Pre-asymptotic error estimates for the CIP-FEM by using the stability}\label{sec-preasy-sta}
In this subsection we shall derive error estimates for scheme (\ref{ecipfem}). This will be done by exploiting the linearity of the Helmholtz equation and making use of the stability estimates derived in Theorem~\ref{stablity} and the projection error estimates established in Lemma~\ref{u-_err}.

Let $u$ and $u_h$ denote the solutions of \eqref{eq1.1a}-\eqref{eq1.1b} and \eqref{ecipfem}, respectively.  Recall that $e_h=u-u_h$. 
Let $u_h^+$ be the elliptic projection of $u$ as defined in \eqref{ahuh+}. Write $e_h=\eta-\xi$ with $\eta:=u-u_h^+,\xi:=u_h-u_h^+$. From the Galerkin orthogonality \eqref{ecipfemorth} and \eqref{ahuh+} we get
\begin{align}
a_h(\xi,v_h)-k^2(\xi,v_h)+\i k\langle \xi,v_h \rangle_{\Ga}&=a_h(\eta,v_h)-k^2(\eta,v_h)+\i k\langle \eta,v_h \rangle_{\Ga}\\
\nn &= -k^2(\eta,v_h)\ \ \ \ \forall v_h \in V_h.
\end{align}
The above equation implies that $\xi \in V_h$ is the solution of scheme \eqref{ecipfem} with sources terms $f=-k^2 \eta$ and $g\equiv 0$. Then an application of Theorem~\ref{stablity} and Lemma~\ref{error1} immediately gives the following lemma.
\begin{mylem}\label{u-u_h err}
$\xi=u_h-u_h^+$ satisfies the following estimate:
\begin{equation}
k\|\xi\|_{L^2(\Om)}+\|\xi\|_{1,h}\lesssim \csta  \cerr\frac{k^2h}{p}\inf_{z_h \in V_h}\norme{u-z_h}.
\end{equation}
\end{mylem}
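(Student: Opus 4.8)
The plan is to combine the stability estimate of Theorem~\ref{stablity} with the projection-error bound of Lemma~\ref{error1}. As noted just before the lemma, $\xi=u_h-u_h^+$ solves the CIP-FEM scheme \eqref{ecipfem} with right-hand side data $f=-k^2\eta$ and $g\equiv 0$, where $\eta=u-u_h^+$. Thus I would first invoke Theorem~\ref{stablity} with this choice of data: since $M(f,g)=\norml{f}{\Om}+\norml{g}{\Ga}=k^2\norml{\eta}{\Om}$, it gives directly
\begin{equation*}
k\|\xi\|_{L^2(\Om)}+\|\xi\|_{1,h}\lesssim \csta\, k^2\,\norml{\eta}{\Om}.
\end{equation*}

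The second step is to bound $\norml{\eta}{\Om}=\|u-u_h^+\|_{L^2(\Om)}$. Since $u\in H^2(\Om)$ (which holds for the solution of \eqref{eq1.1a}--\eqref{eq1.1b} by Remark~\ref{ruh2}), Lemma~\ref{error1}, estimate \eqref{lem 2.3}, applies and yields
\begin{equation*}
\norml{\eta}{\Om}\lesssim \cerr\,\frac{h}{p}\,\inf_{z_h\in V_h}\norme{u-z_h}.
\end{equation*}
Substituting this into the bound from the first step gives exactly
\begin{equation*}
k\|\xi\|_{L^2(\Om)}+\|\xi\|_{1,h}\lesssim \csta\,\cerr\,\frac{k^2h}{p}\,\inf_{z_h\in V_h}\norme{u-z_h},
\end{equation*}
which is the claimed estimate.

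So the proof is essentially a two-line chaining of two previously established results, and there is no serious obstacle; the only points requiring a line of care are (a) verifying that $\xi$ indeed satisfies \eqref{ecipfem} with the stated data — which follows by subtracting \eqref{ahuh+} from the Galerkin orthogonality \eqref{ecipfemorth} and noticing the $a_h(\eta,v_h)$ and $\i k\langle\eta,v_h\rangle$ terms cancel, leaving $-k^2(\eta,v_h)$ — and (b) confirming that the hypotheses of Theorem~\ref{stablity} (the mesh regularity $\ga_e\simeq\ga$, $h_K,h_e\simeq h$) are in force, which they are by the standing assumptions in Section~\ref{sec-2}. I would present the first point explicitly as the displayed equation preceding the lemma already does, then simply state ``applying Theorem~\ref{stablity} and Lemma~\ref{error1}'' and write the two displays above.
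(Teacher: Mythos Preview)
Your proposal is correct and matches the paper's own argument essentially line for line: the paper also observes that $\xi$ solves \eqref{ecipfem} with data $f=-k^2\eta$, $g=0$, and then simply chains Theorem~\ref{stablity} with Lemma~\ref{error1} to conclude.
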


We are ready to state our error estimate results for scheme \eqref{ecipfem}, which follows from Lemma~\ref{error1}, Lemma~\ref{u-u_h err} and an application of the triangle inequality.
\begin{mythm}\label{u-u_h}
Let $u$ and $u_h$ denote the solution of \eqref{eq1.1a}-\eqref{eq1.1b} and \eqref{ecipfem}, respectively. Suppose $u\in H^s(\Omega), s\ge 2$. Then
\begin{align}
&\|u-u_h\|_{1,h} \lesssim \Big(1+ \cerr\csta \frac{k^2h}{p}\Big)\inf_{z_h \in V_h}\norme{u-z_h},\\
&\|u-u_h\|_{L^2(\Om)} \lesssim C_{\rm{err}}\Big(1+k \csta\Big)\frac{h}{p}\inf_{z_h \in V_h}\norme{u-z_h},
\end{align}
where $\cerr:=\big(1+\ga+\frac{kh}{p}\big)^{\frac{1}{2}}$ and $\csta$ is defined in Theorem~\ref{stablity}.
\end{mythm}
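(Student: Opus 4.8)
The plan is to combine the decomposition $e_h=\eta-\xi$ with $\eta=u-u_h^+$, $\xi=u_h-u_h^+$ together with the estimates already assembled in Lemma~\ref{error1} (for $\eta$) and Lemma~\ref{u-u_h err} (for $\xi$), and then invoke the triangle inequality in the norms $\norm{\cdot}_{1,h}$ and $\norml{\cdot}{\Om}$. Since all the hard analytic work---the stability bound for the CIP-FEM (Theorem~\ref{stablity}) and the elliptic projection error estimates (Lemma~\ref{error1})---has been carried out in the preceding sections, the proof of this theorem is essentially a bookkeeping exercise, and this is where I expect no genuine obstacle.

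First I would write $\norm{u-u_h}_{1,h}\le\norm{\eta}_{1,h}+\norm{\xi}_{1,h}$. By Lemma~\ref{error1}, $\norm{\eta}_{1,h}\le\norme{\eta}\lesssim\inf_{z_h\in V_h}\norme{u-z_h}$, and by Lemma~\ref{u-u_h err}, $\norm{\xi}_{1,h}\lesssim\csta\cerr\frac{k^2h}{p}\inf_{z_h\in V_h}\norme{u-z_h}$. Adding these two bounds and factoring out the common infimum yields the first estimate with the constant $1+\cerr\csta\frac{k^2h}{p}$. Note that for this step I do not even need the condition \eqref{econd1}; the estimate holds unconditionally for $k,h,p>0$ and $\ga>0$, which is consistent with the unconditional stability of Theorem~\ref{stablity}.

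For the $L^2$ estimate I would likewise split $\norml{e_h}{\Om}\le\norml{\eta}{\Om}+\norml{\xi}{\Om}$. For the first term Lemma~\ref{error1} gives $\norml{\eta}{\Om}\lesssim\cerr\frac{h}{p}\inf_{z_h\in V_h}\norme{u-z_h}$. For the second term Lemma~\ref{u-u_h err} gives $k\norml{\xi}{\Om}\lesssim\csta\cerr\frac{k^2h}{p}\inf_{z_h\in V_h}\norme{u-z_h}$, i.e. $\norml{\xi}{\Om}\lesssim\csta\cerr\frac{kh}{p}\inf_{z_h\in V_h}\norme{u-z_h}$. Combining, $\norml{e_h}{\Om}\lesssim\cerr\bigl(1+k\csta\bigr)\frac{h}{p}\inf_{z_h\in V_h}\norme{u-z_h}$, which is the claimed bound. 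The only minor care needed is to make sure the constant in $\lesssim$ remains independent of $k,h,p$ and the penalty parameters; this is guaranteed because every ingredient lemma has been stated with that same independence, so the finitely many additions and the extraction of the common factor preserve it. The hypothesis $u\in H^s(\Om)$, $s\ge2$, enters only implicitly to guarantee that $\inf_{z_h\in V_h}\norme{u-z_h}$ is finite and can subsequently be bounded by Lemma~\ref{lapprox1} or Lemma~\ref{error2} to produce the concrete rates stated in the introduction; the theorem itself is purely the abstract quasi-optimality-type bound.
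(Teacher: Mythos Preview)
Your proposal is correct and follows exactly the approach the paper takes: the paper simply states that the theorem ``follows from Lemma~\ref{error1}, Lemma~\ref{u-u_h err} and an application of the triangle inequality,'' and you have filled in precisely those details via the decomposition $e_h=\eta-\xi$ with the bounds for $\eta$ and $\xi$ coming from the two cited lemmas.
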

\begin{myrem} 
{\rm (i)} For the linear CIP-FEM, we have shown that \cite{w}, if $\ga\simeq 1$, then the following pre-asymptotic error estimate hold for any $k, h>0$:
\begin{align*}
\|u-u_h\|_{1,h}\le \big(C_1 k h + C_2\min\set{k^3h^2, 1}\big) C_{f,g}.
\end{align*}

{\rm (ii)} The theorem can also be extended to the over-penalized CIP-FEM. 
In particular (cf. Remark~\ref{r5.3} (iii)), if $p\ge q\ge 2$, and $\ga_j\simeq (C_p^2h)^{\frac{2(j-1)}{2q-1}}, 1\le j\le q,$ 
then the following pre-asymptotic error estimate holds for any $k, h, p>0$: 
\begin{align*}
\|u-u_h\|_{1,h} \le C_1 \inf_{z_h \in V_h}\norme{u-z_h}+C_2\Big(1+\frac{kh}{p}\Big)^{\frac{1}{2}} k p^3C_p^{\frac{4q-4}{2q-1}}h^{\frac{-1}{2q-1}}\inf_{z_h \in V_h}\norme{u-z_h}.
\end{align*}
The details will be reported in a separate work.
\end{myrem}


\appendix
\section{Approximation by $hp$-finite element}\label{sec-app}
In this appendix, we prove Lemma~\ref{lapprox1} and Lemma~\ref{error2}.

\subsection{Proof of Lemma~\ref{lapprox1}}
We need the following trace and inverse inequalities (cf. \cite{schwab98,be07}).
\begin{mylem} \label{trace}
For any $K \in \mathcal T_h$ and $z\in \mathcal P_p(F_K^{-1}(K))$,
\begin{align*}
\|z\|_{L^2(\partial K)}&\lesssim ph^ {-\frac{1}{2}}\|z\|_{L^2(K)},\\
\|\nabla z\|_{L^2(K)}&\lesssim p^2h^{-1}\|z\|_{L^2(K)}.
\end{align*}
\end{mylem}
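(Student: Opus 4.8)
Both inequalities are scale-invariant under pullback to the reference element, so the plan is to separate the $p$-dependence from the $h$-dependence: first establish the corresponding estimates for polynomials on the fixed reference element $\hat K$ (where only $p$ enters), then transfer them to $K$ by a change of variables through the element map $F_K$ (where only $h$ enters). Writing $\hat z := z\circ F_K \in \mathcal P_p(\hat K)$, the two ingredients are (a) the reference polynomial inequalities
\[
\norml{\hat z}{\pa\hat K}\ls p\,\norml{\hat z}{\hat K},\qquad
\norml{\hat\na\hat z}{\hat K}\ls p^2\,\norml{\hat z}{\hat K},
\]
and (b) the Jacobian bounds coming from the mesh regularity assumption.

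\textbf{Reference inequalities and Jacobian bounds.} The first estimate in (a) is the polynomial trace inequality on a fixed simplex, and the second is the (multivariate) Markov inverse inequality; both are classical, with the displayed powers $p$ and $p^2$ sharp, and may be cited from \cite{schwab98} (see also \cite{be07}). For (b) I would invoke Assumption~5.1 of \cite{ms11} on the element maps, whose relevant consequences are that, uniformly on $\hat K$,
\[
\abs{\det DF_K}\simeq h^d,\qquad \norm{DF_K}\ls h,\qquad \norm{(DF_K)^{-1}}\ls h^{-1},
\]
using $h_K\simeq h$. These are exactly the affine-type scaling bounds; the role of Assumption~5.1 is that they hold with $h$-independent constants even though $F_K$ may be curved.

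\textbf{Scaling and the main obstacle.} With these in hand the two estimates follow by changing variables. For the $L^2$ norms, $\norml{z}{K}^2 = \int_{\hat K}\abs{\hat z}^2\abs{\det DF_K}\simeq h^d\norml{\hat z}{\hat K}^2$ and, on each face, $\norml{z}{\pa K}^2\simeq h^{d-1}\norml{\hat z}{\pa\hat K}^2$; combining with the reference trace inequality gives $\norml{z}{\pa K}^2\ls h^{d-1}p^2\norml{\hat z}{\hat K}^2\simeq p^2 h^{-1}\norml{z}{K}^2$, which is the first claim after taking square roots. For the gradient, the chain rule gives $\na z=(DF_K)^{-T}\hat\na\hat z$, so $\abs{\na z}\ls h^{-1}\abs{\hat\na\hat z}$ pointwise; integrating and applying the reference Markov inequality yields $\norml{\na z}{K}^2\ls h^{-2}h^d\norml{\hat\na\hat z}{\hat K}^2\ls p^4 h^{-2}\norml{z}{K}^2$, i.e. the second claim. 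The computation itself is routine once (a) and (b) are available; the only genuine content is the sharp $p$-dependence in the reference inequalities, which I expect to be the main obstacle but which is standard and citable, together with the verification that the curved maps permitted by Assumption~5.1 still yield the uniform Jacobian comparability used above.
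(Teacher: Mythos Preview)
Your argument is correct and is exactly the standard route to these estimates: reduce to the reference element, invoke the polynomial trace inequality and the Markov inverse inequality there (with the sharp powers $p$ and $p^2$), and then scale back using the Jacobian bounds guaranteed by Assumption~5.1 of \cite{ms11}. There is nothing to compare against, however, because the paper does not prove this lemma at all; it simply introduces it with ``We need the following trace and inverse inequalities (cf.\ \cite{schwab98,be07})'' and moves on. Your sketch is precisely the content one would find in those references, so your proposal is consistent with---indeed more detailed than---what the paper provides.
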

We recall the following well-known $hp$ approximation properties (cf. \cite{bs87,guo06,gs07,ms10}):
\begin{mylem}\label{u_u_E}
Let $\mu = \min \{p + 1, s\}$. Suppose $u \in H^s(\Omega)$.

$\bullet$ There exists $\check{u}_h \in W_h$ such that, 
\begin{align}\label{eapprox1}
\|u-\check{u}_h\|_{H^j(\mathcal{T}_h)}:=\Big(\sum_{K\in \mathcal{T}_h}\|u-\check{u}_h\|^2_{H^2(K)}\Big)^{\frac{1}{2}}
 \lesssim \frac{ h^{\mu-j}}{p^{s-j}}\|u\|_{H^s(\Omega)},\quad j=0,1,2.
\end{align}

$\bullet$ There exists $\hat{u}_h \in V_h$ such that
\begin{align}\label{eapprox2}
\|u-\hat{u}_h\|_{H^j(\Omega)}\lesssim \frac{ h^{\mu-j}}{p^{s-j}}\|u\|_{H^s(\Omega)},\ \ \ j=0,1.
\end{align}
Here the invisible constants in the two inequalities above depend on $s$ but are independent of $k, h$, and $p$.
\end{mylem}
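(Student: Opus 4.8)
The plan is to reduce both estimates to a single polynomial approximation bound on the reference element $\hat K$ and then transport it to the physical elements through the element maps $F_K$, invoking the mapping regularity from Assumption~5.1 in \cite{ms11} (uniform bounds on $DF_K$ and $(DF_K)^{-1}$, together with the shape-regular scaling $h_K\simeq h$). Once the local estimate is in hand, the discontinuous bound \eqref{eapprox1} is obtained by summing over elements, and the conforming bound \eqref{eapprox2} by replacing the element-wise approximant with a continuous quasi-interpolant.

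First I would recall the $p$-version estimate on the fixed reference simplex/cube: for $\hat u\in H^s(\hat K)$ there exists a polynomial $\hat\pi_p\hat u\in\mathcal P_p(\hat K)$ with
\[
\|\hat u-\hat\pi_p\hat u\|_{H^j(\hat K)}\lesssim p^{-(s-j)}\|\hat u\|_{H^s(\hat K)},\qquad j=0,1,2,
\]
which is the classical Babu\v{s}ka--Suri result \cite{bs87}, sharpened in the $p$- and scaling-explicit form of \cite{guo06,gs07}. Pulling $u$ back to $\hat K$ by $F_K$, applying this bound, and pushing forward, the chain rule and the scaling produce the element-local estimate
\[
\|u-\pi_{h,p}u\|_{H^j(K)}\lesssim \frac{h^{\mu-j}}{p^{s-j}}\|u\|_{H^s(K)},\qquad j=0,1,2,
\]
with $\mu=\min\{p+1,s\}$ encoding the order cap imposed by the polynomial degree $p$. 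Squaring and summing over $K\in\T_h$ immediately yields \eqref{eapprox1} for the discontinuous approximant $\check u_h\in W_h$ assembled from the local $\pi_{h,p}u$; no inter-element matching is required here, since $W_h$ is discontinuous.

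The harder part is \eqref{eapprox2}, where the approximant must lie in $V_h\subset H^1(\Om)$, i.e.\ be continuous across interfaces, so the purely local construction fails because neighbouring local polynomials need not share a common trace on a shared edge/face. I would remedy this with an $hp$-conforming quasi-interpolation operator, built in the standard way by decomposing the approximant into vertex, edge, face, and interior contributions and using polynomial extension/lifting operators to enforce single-valued traces while preserving the optimal $hp$-rate; such operators are furnished by the Melenk--Sauter analysis \cite{ms10} (see also \cite{bs87,gs07}). Repeating the scaling argument then gives \eqref{eapprox2} for $j=0,1$. The delicate point, and the main obstacle, is precisely that the assembly of the edge/face corrections must not degrade the $h$- or $p$-dependence; since this is exactly what the cited $hp$-interpolation theory establishes, and the result is standard, I would cite these references rather than reproduce the technical extension estimates.
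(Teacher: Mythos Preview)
Your sketch is correct and follows the standard route: local $p$-version approximation on the reference element \cite{bs87,guo06,gs07}, scaling through the regular element maps, summation for the discontinuous approximant, and an $hp$-conforming quasi-interpolant with polynomial lifting for the continuous one \cite{ms10}. Note, however, that the paper does not actually prove this lemma: it is simply recalled as a known result with citations to \cite{bs87,guo06,gs07,ms10}, so there is no original argument to compare against beyond those same references you invoke.
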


Clearly,  \eqref{elapprox1a} holds (cf. \eqref{eapprox2}). It remains to prove \eqref{elapprox1b}. It follows from the \eqref{eapprox2} and the trace inequality that
\begin{equation}\label{eA1}
\norml{u-\hat{u}_h}{\Ga}\lesssim\norml{u-\hat{u}_h}{\Om}^{\frac12}\norm{u-\hat{u}_h}_{H^1(\Om)}^{\frac12}\ls \frac{ h^{\mu-\frac12}}{p^{s-\frac12}}\|u\|_{H^s(\Omega)}.
\end{equation}
From the local trace inequality 
\begin{equation}\label{eA2}
\|v\|^2_{L^2(\partial K)}\lesssim h_K^{-1}\|v\|^2_{L^2(K)}+\|v\|_{L^2(K)}\|\nabla v\|_{L^2(K)},
\end{equation}
we have
\begin{align*}
\sum_{K \in \mathcal{T}_h}\sum_{e \in \partial K}\norm{\frac{\partial (u-\hat{u}_h)}{\partial n_e}}^2_{L^2(e)}
 \lesssim h^{-1}\|u-\hat{u}_h\|^2_{H^1(\mathcal{T}_h)}+\|u-\hat{u}_h\|_{H^1(\mathcal{T}_h)}\|u-\hat{u}_h\|_{H^2(\mathcal{T}_h)}.
\end{align*}
On the other hand, from Lemma~\ref{trace},
\begin{align*}
\|u-\hat{u}_h\|_{H^2(\mathcal{T}_h)}&\leq \|u-\check{u}_h\|_{H^2(\mathcal{T}_h)}+\|\check{u}_h-\hat{u}_h\|_{H^2(\mathcal{T}_h)}\\
\nn &\lesssim \|u-\check{u}_h\|_{H^2(\mathcal{T}_h)}+\frac{p^2}{h}\|\check{u}_h-\hat{u}_h\|_{H^1(\mathcal{T}_h)}
\end{align*}
Therefore, from \eqref{eapprox1} and \eqref{eapprox2},
\begin{align*}
\sum_{K \in \mathcal{T}_h}\sum_{e \in \partial K}\norm{\frac{\partial (u-\hat{u}_h)}{\partial n_e}}^2_{L^2(e)}
\ls \frac{p^2}{h}\frac{ h^{2\mu-2}}{p^{2s-2}}\|u\|_{H^s(\Omega)}^2.
\end{align*}
Then it follows from \eqref{eapprox2} and \eqref{e2.5} that
\begin{align*}
\|u-\hat{u}_h\|^2_{1,h}&\ls \norml{\na(u-\hat{u}_h)}{\Om}^2+\sum_{e \in \mathcal{E}_h^I}\ga_e \frac{h_e}{p^2}\norm{\jm{\frac{\partial (u-\hat{u}_h)}{\partial n_e}}}^2_{L^2(e)}\\
\nn &\lesssim (1+\gamma)\frac{ h^{2\mu-2}}{p^{2s-2}}\|u\|_{H^s(\Omega)}^2.
\end{align*}
By combining the above estimate, \eqref{eA1}, and \eqref{e2.5b} we conclude that \eqref{elapprox1b} holds. This completes the proof of Lemma~\ref{lapprox1}.

\subsection{Proof of Lemma~\ref{error2}}
Lemma~\ref{u_u_E} can also be used to approximate the elliptic part of the solution $u$. The following lemma gives an approximation to the analytic part of the solution $u$. 
\begin{mylem}\label{u_u_A1}
Suppose $\ua$ is analytic and satisfies \eqref{u_A1}--\eqref{u_A2}.

$\bullet$ There exists $\check{u}_{\mathcal{A},h}\in W_h$ such that
\begin{align}\label{u_A-pi_p}
\|u_{\mathcal{A}}-\check{u}_{\mathcal{A},h}\|_{H^j(\mathcal{T}_h)}&:=\Big(\sum_{K\in \mathcal{T}_h}\|u_{\mathcal{A}}-\check{u}_{\mathcal{A},h}\|^2_{H^2(K)}\Big)^{\frac{1}{2}}\\
\nn & \lesssim \Big\{\frac{h^{2-j}}{k p^{2-j}}+\frac{h^{1-j}}{p}\Big(\frac{kh}{\sigma p}\Big)^p\Big\}C_{f,g},\ \ \ j=0,1,2.
\end{align}

$\bullet$ There exists $\hat{u}_{\mathcal{A},h} \in V_h$ such that
\begin{align}\label{u_A-pi}
\|u_{\mathcal{A}}-\hat{u}_{\mathcal{A},h}\|_{H^j(\Om)}\lesssim \Big\{\frac{h^{2-j}}{k p^{2-j}}+\frac{h^{1-j}}{p^{2-j}}\Big(\frac{kh}{\sigma p}\Big)^p\Big\}C_{f,g}, \ \ \  j=0,1.
\end{align}
Here $\sigma > 0$ is some constant independent of $k, h,$ and $p$.
\end{mylem}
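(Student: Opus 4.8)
\emph{Approach.} Both estimates are purely $hp$-approximation bounds for the analytic, highly oscillatory function $\ua$, and the only data I would use about $\ua$ are the derivative bounds \eqref{u_A1}--\eqref{u_A2} of Lemma~\ref{depcomposition}; it is convenient to first rewrite those in the uniform form $\|\nabla^{n}\ua\|_{L^2(\Om)}\ls k^{-1}\lambda^{n}\max\{n,k\}^{n}C_{f,g}$ valid for every $n\ge 0$. The plan is to follow the analytic-approximation machinery of Melenk and Sauter \cite{ms10,ms11}, refining the bookkeeping so as to land on the slightly sharper powers of $p$ claimed here, and to carry the argument one derivative further so as to produce the $H^{2}(\T_h)$ estimate $(j=2)$ in addition to the $j=0,1$ cases treated there.

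\emph{Main steps.} First I would localize: for $K\in\T_h$, transport $\ua$ to the reference element via $F_{K}$ and, using the analyticity of the element maps (Assumption~5.1 of \cite{ms11}), translate the derivative bounds into bounds for $\hat v:=\ua\circ F_{K}$ on $\hat K$ with the $h$-scaling made explicit. On $\hat K$ I would take the degree-$p$ approximant to be a truncated expansion (a Taylor polynomial at the centroid, or a tensorized Legendre/Chebyshev expansion), for which the $H^{j}$-error obtained by truncating at any level $q\le p$ is controlled by $\|\nabla^{q+1}\hat v\|$ with a factorial gain; inserting the transported bounds and scaling back to $K$ yields, for each admissible $q$, an error $\ls \big(\tfrac{h}{q}\big)^{q+1-j}\lambda^{q-1}k^{-1}\max\{q+1,k\}^{q+1}C_{f,g}$ on $K$. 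Optimising over the free truncation level $q$ splits the estimate into two pieces: keeping $q=p$ and summing the resulting geometric tail (ratio $\simeq \lambda kh/p$) produces the exponentially small contribution $\tfrac{h^{1-j}}{p}\big(\tfrac{kh}{\sigma p}\big)^{p}C_{f,g}$ with $\sigma\simeq 1/\lambda$, while the low-order part of the expansion, combined with the $L^2$-stability $\|\ua\|_{L^2(\Om)}\ls k^{-1}C_{f,g}$ and an Aubin--Nitsche-type duality step (exploiting that $\ua$ oscillates on the scale $k^{-1}$, so its slowly varying content is only $O(k^{-1})$ in $L^2$), produces the smooth contribution $\tfrac{h^{2-j}}{k p^{2-j}}C_{f,g}$. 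Squaring and summing over $K\in\T_h$ (using $h_{K}\simeq h$) gives \eqref{u_A-pi_p}. For the conforming estimate \eqref{u_A-pi} I would either invoke an $H^{1}$-stable $hp$-quasi-interpolant onto $V_h$ (as in \cite{ms10,ms11}), which obeys the same element-wise bounds, or post-process $\check u_{\mathcal A,h}$ by an Oswald/averaging operator, estimating $\ua-\hat u_{\mathcal A,h}$ by $\ua-\check u_{\mathcal A,h}$ plus the inter-element jumps of $\check u_{\mathcal A,h}$, the latter bounded through the trace inequality (Lemma~\ref{trace}) by the element-wise $L^{2}$- and $H^{1}$-errors already obtained; the sharper $L^{2}$-control of these jumps is what accounts for the extra factor of $p$ in the denominators of \eqref{u_A-pi} compared with \eqref{u_A-pi_p}.

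\emph{Main obstacle.} The genuinely delicate point is not the existence of a good approximant, which is standard, but the sharp tracking of $p$: one must carry the combinatorial and factorial constants in the reference-element truncation estimate, as well as the constants hidden in the non-affine maps $F_{K}$, precisely enough to arrive at $p^{-(2-j)}$ and $p^{-1}$ (respectively $p^{-(2-j)}$ in the conforming case) rather than at the cruder powers produced by a black-box application of the $H^{p+1}$-estimate of Lemma~\ref{u_u_E}, while simultaneously keeping the exponential factor at $\big(\tfrac{kh}{\sigma p}\big)^{p}$ with $\sigma$ independent of $k,h,p$. Correctly combining the low-order part of the expansion with the duality argument so as to extract the $k^{-1}$-weighted leading term without giving back powers of $p$ is the step that requires the most care.
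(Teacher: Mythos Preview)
Your overall plan---localize to the reference element, exploit the analyticity bounds through the Melenk--Sauter machinery, and obtain a two-term estimate---is the right skeleton, and for the first bullet it essentially coincides with what the paper does (invoking \cite[Lemma~C.2]{ms10} element by element and summing via the quantities $C_K^2:=\sum_{n\ge0}\|\nabla^n\ua\|_{L^2(K)}^2/(2\lambda\max\{n,k\})^{2n}$, which satisfy $\sum_K C_K^2\lesssim k^{-2}C_{f,g}^2$). But you misplace the duality argument, and this matters because it is precisely the step responsible for the improvement over \cite{ms10,ms11}.

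The ``smooth'' contribution $h^{2-j}/(kp^{2-j})$ in \eqref{u_A-pi_p} is \emph{not} produced by any Aubin--Nitsche step; it comes from pure algebra. The element-wise approximant of \cite[Lemma~C.2]{ms10} already delivers an error $\lesssim h^{-j}C_K\big\{(\tfrac{h}{h+\sigma})^{p+1}+(\tfrac{kh}{\sigma p})^{p+1}\big\}$, and the first bracket is simplified via the elementary bound
\[
h^{-j}\Big(\frac{h}{h+\sigma}\Big)^{p+1}
=\frac{h^{2-j}}{p^{2-j}}\cdot\frac{p^{2-j}}{(h+\sigma)^{2}}\Big(\frac{h}{h+\sigma}\Big)^{p-1}
\lesssim \frac{h^{2-j}}{p^{2-j}},
\]
since $p^2\rho^{p-1}$ is bounded for fixed $\rho<1$. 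No duality is needed, and indeed none would make sense here because $\check u_{\mathcal A,h}\in W_h$ is discontinuous and solves no global variational problem.

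Where duality \emph{is} used---and this is the point you should relocate---is in the second bullet, and specifically for the $L^2$-estimate. The paper does not build $\hat u_{\mathcal A,h}$ by Oswald averaging; instead it first takes the conforming approximant $u_{\mathcal A,h}\in V_h$ from \cite[Lemma~C.3]{ms10} (which gives the $H^1$-bound in \eqref{u_A-pi}), then defines $\hat u_{\mathcal A,h}$ as the \emph{elliptic projection} of $\ua$ onto $V_h$ with respect to $a(u,v)=(\nabla u,\nabla v)+(u,v)$, and finally runs a standard Nitsche duality against $-\Delta w+w=\ua-\hat u_{\mathcal A,h}$ with $\partial_n w=0$ on $\Gamma$ to gain the extra factor $h/p$ in $L^2$. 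Your Oswald/averaging route would control $\hat u_{\mathcal A,h}-\check u_{\mathcal A,h}$ in $L^2$ by jump terms, but those, after the trace inequality, scale like $h^{1/2}$ times $H^1$-errors and would not obviously deliver the full $h/p$ gain in the $L^2$-norm; the elliptic-projection-plus-duality device is what cleanly produces the $p^{-(2-j)}$ in \eqref{u_A-pi}.
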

\begin{myrem}\label{rem-A1} The following estimate is proved in \cite[Theorem 5.5]{ms10} (see also \cite[Proposition~5.3]{ms11}):
\begin{align*}
\inf_{v_h\in V_h}\big(\norml{\na(\ua-v_h)}{\Om}+k\norml{\ua-v_h}{\Om}\big)\ls \Big(1+\frac{k h}{p}\Big)\Big(\frac{h}{p}+\Big(\frac{k h}{\sigma p}\Big)^p\Big).
\end{align*}
Clearly, a combination of our $H^1$-estimate and $L^2$-estimate in \eqref{u_A-pi} gives better upper bound than the above estimate. This is because our $L^2$-estimate improves that implied in the proof of \cite[Theorem 5.5]{ms10}.
\end{myrem}
\begin{proof}
Following the proof of \cite[Theorem 5.5]{ms10}, we start by defining for each element $K \in \mathcal{T}_h$ the constant $C_K$ by
\begin{equation}
C^2_K:=\sum_{p \in \mathbb{N}_0}\frac{\|\nabla^p u_{\mathcal{A}}\|^2_{L^2(K)}}{(2\lambda \max\{p,k\})^{2p}}
\end{equation}
 and we note
\begin{align}
\|\nabla^p u_{\mathcal{A}}\|_{L^2(K)}& \leq (2\lambda \max\{p,k\})^{p}C_K \ \ \ \ \forall p \in \mathbb{N}_0,\\
\sum_{K\in\mathcal{T}_h}C^2_K &\lesssim \frac{1}{k^2}C^2_{f,g}. \label{C_K}
\end{align}
Then, \cite[Lemma C.2]{ms10} and a scaling argument provides an approximant $\check{u}_{\mathcal{A},h} \in W_h$ which satisfies for $j=0,1,2$,
\begin{equation*}
\|u_{\mathcal{A}}-\check{u}_{\mathcal{A},h}\|_{H^j(K)}\lesssim h^{-j}C_K\Big\{\Big(\frac{h}{h+\sigma}\Big)^{p+1}+\Big(\frac{kh}{\sigma p}\Big)^{p+1}\Big\}.
\end{equation*}
Summation over all elements $K \in \mathcal{T}_h$ gives
\begin{align*}
\|u_{\mathcal{A}}-\check{u}_{\mathcal{A},h}\|^2_{H^j(\mathcal{T}_h)}&:=\Big(\sum_{K\in \mathcal{T}_h}\|u_{\mathcal{A}}-\check{u}_{\mathcal{A},h}\|^2_{H^j(K)}\Big)\\
&\lesssim h^{-2j}\Big[\Big(\frac{h}{h+\sigma}\Big)^{2p+2}+\Big(\frac{kh}{\sigma p}\Big)^{2p+2}\Big]\sum_{K \in \mathcal{T}_h}C^2_K .
\end{align*}
The combination of the above inequality and \eqref{C_K} yields
\begin{equation}
\|u_{\mathcal{A}}-\check{u}_{\mathcal{A},h}\|_{H^j(\mathcal{T}_h)}\lesssim \Big[\frac{1}{k}h^{-j}\Big(\frac{h}{h+\sigma}\Big)^{p+1}+\frac{h^{1-j}}{p}\Big(\frac{kh}{\sigma p}\Big)^{p}\Big]C_{f,g}.
\end{equation}
Furthermore, we estimate using $h \leq$ diam $\Omega $ and $\sigma > 0$ (independent of $h$)
\begin{align}\label{p-h-s}
h^{-j}\Big(\frac{h}{h+\sigma}\Big)^{p+1}&=\frac{h^{2-j}}{p^{2-j}}\frac{h^{p-1}}{(h+\sigma)^{p+1}}p^{2-j}
\leq\frac{h^{2-j}}{p^{2-j}}\Big(\frac{h}{h+\sigma}\Big)^{p-1}\frac{p^2}{\sigma ^2} \\
\nn &\leq\frac{h^{2-j}}{p^{2-j}}\Big(\frac{\rm{diam}\  \Om}{\sigma+\rm{diam}\  \Om}\Big)^{p-1}\frac{p^2}{\sigma ^2}\lesssim \frac{h^{2-j}}{p^{2-j}},
\end{align}
we therefore arrive at
\begin{equation*}
\|u_{\mathcal{A}}-\check{u}_{\mathcal{A},h}\|_{H^j(\mathcal{T}_h)}\lesssim \Big[\frac{1}{k}\frac{h^{2-j}}{p^{2-j}}+\frac{h^{1-j}}{p}\Big(\frac{kh}{\sigma p}\Big)^{p}\Big]C_{f,g}.
\end{equation*}
That is, \eqref{u_A-pi_p} holds.

Similarly, \cite[Lemma C.3]{ms10} and a scaling argument provides an approximant $u_{\mathcal{A},h} \in V_h$ which satisfies for $j=1$,
\begin{equation*}
\|u_{\mathcal{A}}-u_{\mathcal{A},h}\|_{H^1(K)}\lesssim h^{-1}C_K\Big\{\Big(\frac{h}{h+\sigma}\Big)^{p+1}+\Big(\frac{kh}{\sigma p}\Big)^{p+1}\Big\}.
\end{equation*}
Summation over all elements $K \in \mathcal{T}_h$ gives
\begin{equation*}
\|u_{\mathcal{A}}-u_{\mathcal{A},h}\|^2_{H^1(\Om)}\lesssim \Big[h^{-2}\Big(\frac{h}{h+\sigma}\Big)^{2p+2}+\frac{k^2}{p^2}\Big(\frac{kh}{\sigma p}\Big)^{2p}\Big]\sum_{K \in \mathcal{T}_h}C^2_K .
\end{equation*}
The combination of the above inequality and \eqref{C_K}, \eqref{p-h-s} yields
\begin{equation*}
\|u_{\mathcal{A}}-u_{\mathcal{A},h}\|_{H^1(\Om)}\lesssim \Big[\frac{h}{k\, p}+\frac{1}{p}\Big(\frac{kh}{\sigma p}\Big)^{p}\Big]C_{f,g}.
\end{equation*}
We introduce the sesquilinear form $a(\cdot,\cdot):=(\nabla u, \nabla v)+(u,v)$ and define the elliptic projection $\hat{u}_{\mathcal{A},h} \in V_h$ of $\ua$ by
\begin{equation}\label{elli}
a(\hat{u}_{\mathcal{A},h},v_h)=a(\ua,v_h) \ \ \ \ \forall v_h  \in V_h.
\end{equation}
Then,
\begin{align*}
\|u_{\mathcal{A}}-\hat{u}_{\mathcal{A},h}\|^2_{H^1(\Om)}&=a(u_{\mathcal{A}}-\hat{u}_{\mathcal{A},h},u_{\mathcal{A}}-\hat{u}_{\mathcal{A},h})
=a(u_{\mathcal{A}}-\hat{u}_{\mathcal{A},h},u_{\mathcal{A}}-u_{\mathcal{A},h})\\
\nn &\lesssim \|u_{\mathcal{A}}-\hat{u}_{\mathcal{A},h}\|_{H^1(\Om)}\|u_{\mathcal{A}}-u_{\mathcal{A},h}\|_{H^1(\Om)}.
\end{align*}
Therefore,
\begin{equation}\label{u_A_H^1}
\|u_{\mathcal{A}}-\hat{u}_{\mathcal{A},h}\|_{H^1(\Om)}\lesssim \|u_{\mathcal{A}}-u_{\mathcal{A},h}\|_{H^1(\Om)}\lesssim \Big[\frac{h}{k\, p}+\frac{1}{p}\Big(\frac{kh}{\sigma p}\Big)^{p}\Big]C_{f,g}.
\end{equation}
To show \eqref{u_A-pi}, we use the Nitsche's duality argument and consider the following auxiliary problem:
\begin{align}
-\triangle w+w&=u_{\mathcal{A}}-\hat{u}_{\mathcal{A},h} \ \ \ \ \ \text{in} \ \ \Om, \label{eqauxi1} \\
\frac{\partial w}{\partial n}&=0 \ \ \ \ \ \ \ \ \ \ \ \ \ \ \ \ \ \text{on} \ \ \Ga.
\end{align}
It can be shown that $w$ satisfies
\begin{equation} \label{auxil11}
\|w\|_{H^2(\Om)}\lesssim \|u_{\mathcal{A}}-\hat{u}_{\mathcal{A},h}\|_{L^2(\Om)}.
\end{equation}
Let  $\hat{w}_h \in V_h$  be defined in Lemma~\ref{u_u_E}. Testing the conjugated \eqref{eqauxi1} by $u_{\mathcal{A}}-\hat{u}_{\mathcal{A},h}$ we get
\begin{align*}
\|u_{\mathcal{A}}-\hat{u}_{\mathcal{A},h}\|_{L^2(\Om)}&=(u_{\mathcal{A}}-\hat{u}_{\mathcal{A},h},-\triangle w+w)\\
&=(\nabla(u_{\mathcal{A}}-\hat{u}_{\mathcal{A},h}),\nabla w)+(u_{\mathcal{A}}-\hat{u}_{\mathcal{A},h},w)\\
&=a(u_{\mathcal{A}}-\hat{u}_{\mathcal{A},h},w )=a(u_{\mathcal{A}}-\hat{u}_{\mathcal{A},h},w-\hat{w}_h)\\
&\lesssim \|u_{\mathcal{A}}-\hat{u}_{\mathcal{A},h}\|_{H^1(\Om)}\|w-\hat{w}_h\|_{H^1(\Om)}\\
&\lesssim \|u_{\mathcal{A}}-\hat{u}_{\mathcal{A},h}\|_{H^1(\Om)}\frac{h}{p}\|w\|_{H^2(\Om)},
\end{align*}
which together with \eqref{auxil11} and \eqref{u_A_H^1} gives \eqref{u_A-pi}. The proof is completed.
\end{proof}

Next we prove Lemma~\ref{error2}.
We consider to approximate $\ue$ and $u_{\mathcal{A}}$, respectively.
First, from Lemma~\ref{u_u_E}, there are two functions $\hat{u}_{\mathcal{E},h} \in V_h$ and $\check{u}_{\mathcal{E},h} \in  W_h$ such that
\begin{align}
&\|\ue-\hat{u}_{\mathcal{E},h}\|_{H^j(\Om)}\lesssim \frac{ h^{2-j}}{p^{2-j}}C_{f,g},\ \ \ j=0,1, \label{u w H^2}\\
&\|\ue-\check{u}_{\mathcal{E},h}\|_{H^j(\mathcal{T}_h)}\lesssim \frac{ h^{2-j}}{p^{2-j}}C_{f,g},\ \ j=0,1,2.
\end{align}
On the other hand, from Lemma~\ref{u_u_A1}, there exist $\hat{u}_{\mathcal{A},h}\in V_h$ and $\check{u}_{\mathcal{A},h}\in   W_h$ satisfying \eqref{u_A-pi} and \eqref{u_A-pi_p} respectively. Let $\hat{u}_h=\hat{u}_{\mathcal{E},h}+\hat{u}_{\mathcal{A},h}$, $\check{u}_h=\check{u}_{\mathcal{E},h}+\check{u}_{\mathcal{A},h}$.
 It follows from the triangle inequality that
\begin{align}
\|u-\hat{u}_h\|_{H^j(\Om)}&\lesssim \Big(\frac{h^{2-j}}{p^{2-j}}+\frac{h^{1-j}}{p^{2-j}}\Big(\frac{kh}{\sigma p}\Big)^p\Big)C_{f,g},\ \ j=0,1, \label{u-u_h11} \\
\|u-\check{u}_h\|_{H^j(\mathcal{T}_h)}&\lesssim \Big(\frac{h^{2-j}}{p^{2-j}}+\frac{h^{1-j}}{p}\Big(\frac{kh}{\sigma p}\Big)^p\Big)C_{f,g},\ \ j=0,1,2. \label{u-u_h12}
\end{align}
Therefore, \eqref{u_err0} holds.

 \eqref{u_err2} can be proved by  following the same procedure as that for deriving \eqref{elapprox1b}. We omit the details. This completes the proof of Lemma~\ref{error2}.

\textbf{Acknowledgments.} The author would like to thank Professor J.M. Melenk for helpful communications.

\end{document}